\newtheorem{theorem}{Theorem}
\newtheorem{proposition}[theorem]{Proposition}
\newtheorem{corollary}[theorem]{Corollary}
\newtheorem{lemma}[theorem]{Lemma}
\newtheorem{assumption}[theorem]{Assumption}
\theoremstyle{remark}
\newtheorem{remark}[theorem]{Remark}
\newtheorem{ex}[theorem]{Example}
\DeclareSymbolFont{stmry}{U}{stmry}{m}{n}
\DeclareMathSymbol\mapsfromchar\mathrel{stmry}{"5B}
\newcommand{\ie}{{i.e.}\xspace} 
\newcommand{\eg}{{e.g.}\xspace} 
\newcommand{\etc}{{etc}\xspace}
\newcommand{\arxiv}[1]{\texttt{arXiv:#1}}
\DeclareMathOperator*{\argmax}{\arg\max}
\DeclareMathOperator*{\sign}{sign}
\newcommand{\hardyex}{{\bf LP}} 
\newcommand{\tropex}{{\rm LP}}
\newcommand{\troppath}{\mathcal{C}^{\text{\rm trop}}}
\newcommand{\troppathprimal}{\mathcal{C}^{\text{\rm trop}}_{\text{\rm primal}}}
\newcommand{\lang}{\mathcal{L}}
\newcommand{\theory}{\text{Th}}
\newcommand{\struct}{\mathfrak{R}}
\newcommand{\mystruct}{\bar{\R} ^ {\R}}
\DeclareMathAlphabet\mathbfcal{OMS}{cmsy}{b}{n}
\DeclareMathAlphabet{\mathbbold}{U}{bbold}{m}{n}
\newcommand{\R}[0]{\mathbb{R}}
\newcommand{\F}[0]{\mathbb {F}}                
\newcommand{\K}[0]{\mathbb {K}}
\newcommand{\trop}[1][]{
\ifthenelse{\equal{#1}{}}{ \mathbb{T} }{ \mathbb{T}(#1) }%
}
\newcommand{\transpose}[1]{#1^{\top}}
\newcommand{\tplus}{\oplus}          
\newcommand{\tsum}{\bigoplus}        
\newcommand{\ttimes}{\odot}       
\newcommand{\tdot}{\odot}
\newcommand{\tropP}{\mathcal{P}}  
\newcommand{\tropQ}{\mathcal{Q}}  
\newcommand{\tropR}{\mathcal{R}}
\DeclareMathOperator*{\val}{val}
\newcommand{\puiseuxP}[0]{\bm{\mathcal{P}}}   
\newcommand{\puiseuxQ}[0]{\bm{\mathcal{Q}}}   
\newcommand{\puiseuxS}[0]{\bm{\mathcal{S}}}
\newcommand{\x}[0]{\bm{x}}
\newcommand{\y}[0]{\bm{y}}
\newcommand{\puiseuxR}[0]{\bm{\mathcal{R}}}
\newcommand{\cpath}{\mathcal{C}}
\newcommand{\perturb}[2][]{
\ifthenelse{\equal{#1}{}}{ \tilde{\Ipert{#2}} }{ #2[#1] }%
}
\newcommand{\pert}[2][]{
\ifthenelse{\equal{#1}{}}{ \widetilde{\Ipert{#2}} }{ \Ipert{#2}[#1] }%
}
\newcommand{\Ipert}[1]{
\mathsf{#1}}
\newcommand{\norm}[1]{\|#1\|}
\newcommand{\scalar}[2]{\langle#1,#2\rangle}
\newcommand{\classicalpuiseux}{\R\{\!\{t\}\!\}}
\newcommand{\funk}{\delta_{\mathrm F}}
\newcommand{\hilbert}{d_{\mathrm H}}
\newcommand{\inverse}[1]{\jmath(#1)}
\newlength{\mytemplen}
\newcommand{\mybackup}[1]
{
  \settowidth{\mytemplen}{\(\displaystyle #1\)}
  \hskip-\mytemplen%
  \mkern-8mu
}
\title{Long and Winding Central Paths}
\author{Xavier Allamigeon
\and Pascal Benchimol
\and St\'ephane Gaubert
\and Michael Joswig} 
\address[Xavier Allamigeon, St{\'e}phane Gaubert]{INRIA and CMAP, \'Ecole Polytechnique, CNRS, 91128 Palaiseau Cedex France
\texttt{firstname.lastname@inria.fr}}
\address[Pascal Benchimol]{EDF Lab,
1 avenue du Général de Gaulle, BP 408,
92141, Clamart Cedex, France \texttt{pascal.benchimol@polytechnique.edu}}
\address[Michael Joswig]{
Institut f{\"u}r Mathematik,
 TU Berlin, 
 Str.\ des 17. Juni 136, 10623 Berlin, Germany
 \texttt{joswig@math.tu-berlin.de}
}
\thanks{The first and third authors are partially supported by the PGMO program of EDF and Fondation Math\'ematique Jacques Hadamard.}
\thanks{During this work, P.~Benchimol was affiliated with INRIA Saclay \^Ile-de-France and CMAP, \'Ecole Polytechnique,  CNRS UMR 7641. He was supported a PhD fellowship of DGA and \'Ecole Polytechnique.}
\thanks{M.~Joswig is partially supported by Einstein Foundation Berlin, DFG within the Priority Program 1489 and by a CNRS INSMI visiting professorship at CMAP, \'Ecole Polytechnique, UMR 7641 and IMJ, Universit\'e Pierre et Marie Curie, UMR 7586.}
\subjclass[2010]{90C51, 14T05}
\begin{document}

\begin{abstract}
  We disprove a continuous analogue of the Hirsch conjecture proposed by Deza, Terlaky and Zinchenko, by constructing a family of linear programs with
  $3r+4$ inequalities in dimension $2r+2$ where the central path has a total curvature in $\Omega(2^r)$. Our method is to tropicalize the central
  path in linear programming. The tropical central path is the piecewise-linear limit of the central paths of parameterized families of classical
  linear programs viewed through logarithmic glasses. The lower
bound for the classical curvature is obtained by developing a combinatorial
concept of a tropical angle.
\end{abstract}

\maketitle

\section{Introduction} 
\noindent
Since Karmarkar's seminal work~\cite{karmarkar1984new}, interior point methods have become indispensable in mathematical optimization. They provide
algorithms with a polynomial complexity in the bit model for linear programming.  Moreover, interior point methods are also useful for more general
convex optimization problems such as semi-definite programming.  Path-following interior point methods are driven to an optimal solution along a
trajectory called the \emph{central path}. Early on, Bayer and Lagarias
recognized that the central path is ``a fundamental mathematical
object underlying Karmarkar's algorithm and that the good convergence properties of Karmarkar's algorithm arise from good geometric properties of the set
of trajectories''~\cite[p.~500]{BayerLagarias89a}. 
Intuitively, a central path with high curvature should be harder to approximate with line segments, and
thus this suggests more iterations of the interior point methods. 
Dedieu and Shub conjectured that the total curvature of the central
path is linearly bounded in the dimension of the ambient space~\cite{dedieu2005newton}.  Dedieu, Malajovich and Shub showed that this property is valid in some average sense~\cite{dedieu2005curvature}. 
However, Deza, Terlaky and Zinchenko 
provided a counter example
by constructing a redundant Klee-Minty cube~\cite{DTZ08}.
This led them to state a continuous analogue of the Hirsch conjecture:
that the total curvature of the central
path is linearly bounded in the number of constraints.
The
purpose of this paper is to apply tools from tropical geometry to study the central paths. In this way, we disprove the conjecture of Deza, Terlaky
and Zinchenko. 

Tropical geometry can be seen as the (algebraic) geometry on the semiring $(\trop, \tplus, \ttimes)$ where the set $\trop = \R \cup \{ - \infty \}$ is
endowed with the operations $a \tplus b = \max(a,b)$ and $a \ttimes b = a +b$.  A tropical variety can be obtained as the limit at infinity of a
sequence of classical algebraic varieties depending on one real parameter $t$ and drawn on logarithmic paper, with $t$ as the logarithmic base. 
This process is known as Maslov's~\emph{dequantization}~\cite{litvinov2007maslov}, or Viro's method~\cite{viro2001dequantization}.  It can be traced
back to the work of Bergman~\cite{bergman1971logarithmic}.  In a way, dequantization yields a piece-wise linear image of classical algebraic
geometry.  Tropical geometry has a strong combinatorial flavor, and yet it retains a lot of information about the classical
objects~\cite{itenberg2009tropical,TropicalBook}.
 
The tropical semiring can also be thought of as the image of a non-archimedean field under its valuation map. This is the approach we adopt here.  The
non-archimedean fields typically used are the field of formal Puiseux series~\cite{kapranov,DevelinYu07, richter2005first}
or the field of generalized Puiseux series with real exponents~\cite{markwig2007field}, or larger fields of formal Hahn series~\cite{stacs}.  However,
since we are aiming at analytic results, matters of convergence play a key role, and this is why dealing with any kind of formal power series is not
suitable here.  Instead we take the viewpoint of Alessandrini~\cite{alessandrini2013} who suggested to study tropicalizations of real semi-algebraic
sets via a Hardy field, $\K$, of germs of real-valued functions.  The functions $f \in \K$ are definable in some o-minimal structure, which ensures a
tame topology.  In particular, the limit $ \lim_{ t \to \infty } \log ( f(t) ) / \log(t)$ always exists, and this defines a valuation
on~$\K$. Furthermore, this framework is flexible enough to include all power functions into $\K$; this makes the valuation map surjective
onto $\R \cup \{ - \infty \}$.

We consider linear programs defined on the Hardy field $\K$. As $\K$ is an ordered field, the basic results of linear programming (Farkas' lemma,
strong duality, \etc) still hold true on $\K$.  Also, since $\K$ is real closed, the central path of a linear program is well-defined.  The
elements of~$\K$ are real-valued functions. As a result, a linear program over $\K$ encodes a family of linear programs over $\R$, and the central path on $\K$
describes the central paths of this family. The tropical central path is then defined  as the image under the valuation map.  Thus, the tropical
central path is the logarithmic limit of a family of classical central paths.  We establish that this convergence is uniform.

The tropical central path has a purely geometric characterization.  Applying the valuation map to the feasible region
yields a tropical polyhedron $\tropP$.  We show that the tropical analytic center is the greatest element of this
tropical polyhedron, the tropical equivalent of a barycenter. Thus, the tropical analytic center does not depend on the
external representation of the feasible set.  Similarly, any point on the tropical central path is the tropical
barycenter of the set obtained by intersecting $\tropP$ with a tropical sublevel set induced by the objective
function. This is in stark contrast with the classical case, where the central path depends on the halfspace description
of the feasible set.  For an example, see~\cite{DTZ08}.

A maybe surprising feature is that the tropical central path can degenerate to a path taken by the tropical simplex
method introduced in \cite{tropical+simplex,stacs}.  We can even provide a quite general sufficient condition for this kind of degeneration.
Consequently, the tropical central path may have the same worst-case behavior as the simplex method.  

A main contribution of this paper is the study of the total curvature of the real central paths arising from lifting
tropical linear programs to the Hardy field $\K$.  This leads to a family of linear programs with $3r+4$ inequalities in
dimension $2r+2$ where the central path has a total curvature in $\Omega(2^r)$.  In fact, the tropical central path
shows a self-similar pattern, which has a staircase-like shape with $\Omega(2^r)$ steps.  This provides a counter
example to the continuous analogue of the Hirsch conjecture.  Our family of linear programs is gotten by lifting
tropical linear programs which come from a construction of Bezem, Nieuwenhuis and
Rodr{\'{\i}}guez-Carbonell~\cite{BezemNieuwenhuisRodriguez08}. Their goal was to show that an algorithm of Butkovi\v{c}
and Zimmermann~\cite{Butkovic2006} has exponential running time. In order to estimate the curvature in this counter
example, we introduce a notion of tropical angle. This allows us to define a tropical analogue of the total curvature
which provides a lower bound for the classical total curvature. This notion of tropical angle and related metric
properties of real tropical varieties could be of independent interest.

\subsection*{Related Work.}
The possible simplex-like behavior of interior point methods was already observed by Megiddo and
Shub~\cite{megiddo1989boundary} and by Powell~\cite{powell1993number}. 
The redundant Klee-Minty cube of~\cite{DTZ08} and the ``snake'' in~\cite{deza2008polytopes} are instances which show that that the total curvature of the central path can be in $\Omega(m)$ for a polytope described by $m$ inequalities.  
Gilbert, Gonzaga and Karas~\cite{CharlesGilbert2004} also
exhibited ill-behaved central paths.  They showed that the central path can have a ``zig-zag'' shape with infinitely
many turns, on a problem defined in $\R^2$ by non-linear but convex functions.  

The central path has been studied by Dedieu, Malajovich and Shub~\cite{dedieu2005curvature} via the multihomogeneous B\'ezout
Theorem and by De Loera, Sturmfels and Vinzant~\cite{de2010central} using matroid theory.  These two papers provide an
upper bound of $O(n)$ on the total curvature averaged over all regions of an arrangement of hyperplanes in
dimension~$n$.  

In terms of iteration-complexity of
interior point methods, several worst-case results have been
proposed~\cite{anstreicher1991performance,kaliski1991convergence,ji1994complexity,powell1993number,todd1996lower,Bertsimas1997}.
In particular, Stoer and Zhao~\cite{zhao1993estimating} showed the iteration-complexity of a certain class of
path-following methods is governed by an integral along the central path. This quantity, called \emph{Sonnevend's
  curvature}, was introduced in~\cite{sonnevend1991complexity}. The tight relationship between the total Sonnevend
curvature and the iteration complexity of interior points methods have been extended to semi-definite and symmetric cone
programs~\cite{kakihara2013information}. 
Note that Sonnevend's curvature is different from the \emph{geometric} curvature we study in this paper. To the best of
our knowledge, there is no explicit relation between the geometric curvature and the iteration-complexity of
interior point methods. However, these two notions of curvature share similar properties. For instance, the total
geometric curvature and the total Sonnevend curvature are both maximal when the number of inequalities is twice the
dimension~\cite{deza2008polytopes,mut2013analogue}.  Further, on the redundant Klee-Minty cube of Deza, Terlaky and
Zinchenko, both the total geometric curvature and the total Sonnevend curvature are large~\cite{mut2013tight,DTZ08}.

\section{Preliminaries}

\subsection{The Hardy field} In this section, we recall the elements and results of model theory which are needed to define the Hardy field. We refer the reader to~\cite{MarkerDavid} for more background.

\subsubsection*{Languages and first-order formulae}
A \emph{language} $\lang = (\mathcal{R}, \mathcal{F}, \mathcal{C})$ consists of a set $\mathcal{R}$ of relations, a set $\mathcal{F}$ of functions,
and a set $\mathcal{C}$ of constants.  Each relation $R$ is equipped with an \emph{arity}, $n_R$, which is a positive integer.  Similarly, each
function $F$ also has an arity, denoted as $n_F$. For example, the language of ordered rings is $\lang_{\text{or}} = (\{< \}, \{ +, -, \cdot  \}, \{0,1
\}) $, where the order relation $<$ and the arithmetic functions $+,-,\cdot$ have arity two.

We shall now describe the (first-order) formulae of a language $\lang$.
An $\lang$-\emph{term} is either
\begin{itemize}
\item a variable $v_i$, for some $i \geq 1$, or
\item a constant $c \in \mathcal{C}$, or
\item $F(t_1, \dots, t_{n_F})$ where $F \in \mathcal{F}$ is a function, and $t_1, \dots t_{n_F}$ are $\lang$-terms.
\end{itemize}

An $\lang$-\emph{formula} is then defined inductively as follows:
\begin{itemize}
\item if $t_1$ and $t_2$ are $\lang$-terms, then $t_1 = t_2$ is an $\lang$-formula;
\item if $R \in \mathcal{R}$ is a relation, and $t_1, \dots t_{n_R}$ are terms, then $R(t_1, \dots t_{n_R})$ is an $\lang$-formula;
\item if $\phi$ and $\psi$ are $\lang$-formula, then $(\neg \phi)$, $(\phi \wedge \psi)$ and $(\phi \vee \psi)$ are $\lang$-formulae;
\item if $\phi$ is an $\lang$-formula and $v_i$ is a variable, then $\exists v_i \phi$ and $\forall v_i \phi$ are $\lang$-formulae.
\end{itemize}

A variable $v_i$ which occurs in a formula $\phi$ without being modified by a quantifier $\exists$ or $\forall$ is said to be \emph{free}. We shall
emphasize the free variables $v_{i_1}, \dots, v_{i_k}$ of a formula $\phi$ by writing $\phi(v_{i_1}, \dots, v_{i_k})$. A formula without free variable
is called a \emph{sentence}.

\subsubsection*{Structures}
Let $\lang = (\mathcal{R}, \mathcal{F}, \mathcal{C})$ be a language. An $\lang$-structure $\mathfrak M$ consists of a non-empty set $M$ (called the
\emph{domain} of $\mathfrak M$) together with an \emph{interpretation} of the symbols of $\lang$ in $M$. A relation $R \in \mathcal R$ is interpreted
by a subset $S_R \subset M^{n_R}$, where a tuple $(x_1, \dots, x_{n_R})$ satisfies the relation $R$ if $(x_1, \dots, x_{n_R}) \in S_R$. A function $F
\in \mathcal F$ is interpreted by a map $M^{n_f} \to M$, and the interpretation of a constant $c \in \mathcal C$ is an element of $M$.

The interpretation of the language $\lang$ induces an interpretation of the formulae of $\lang$ in the structure $\mathfrak M$. Every formula
$\phi(v_{i_1}, \dots, v_{i_k})$ defines a Boolean function $\phi^{\mathfrak M}$ on $M^k$.  If $\phi^{\mathfrak M}$ is true at $a \in M^k$, we
write $\mathfrak M \models \phi(a)$.  
In particular, if $\phi$ is a sentence in $\lang$, the function $\phi^{\mathfrak M}$ is constant. Thus a
sentence $\phi$ defines a statement on $\mathfrak M$ which is either true or false.  The set of sentences that are true on $\mathfrak M$ is called the
\emph{full theory} of $\mathfrak M$; it is denoted by $\theory(\mathfrak{M})$.  An arbitrary $\lang$-structure $\mathfrak{N}$ is a \emph{model} of the
theory $\theory(\mathfrak{M})$ if $\mathfrak{N} \models \phi$ for all $\phi \in \theory(\mathfrak{M})$.

A set $A \subset M^k$ is \emph{definable} (in $\mathfrak M$) if there exists an $\lang$-formula $\phi(v_1, \dots v_k, w_1, \dots, w_\ell)$ and an element $b \in
M^\ell$ such that $A = \{ a \in M^k \mid \mathfrak M \models \phi( a, b) \}$. Given a definable set $A \subset M^k$, a map $F: A \to M^\ell$ is \emph{definable}
if its graph $\{ (a,F(a) ) \mid a \in A \} \subset M^{k+\ell}$ is a definable set.

An \emph{expansion} $\lang'$ of a language $\lang$ is obtained by adding some new relations, functions and constants to $\lang$. We define an
\emph{expansion} of an $\lang$-structure $\mathfrak M$ to be an $\lang'$-structure $\mathfrak M'$ such that:  $\lang'$ is an expansion of $\lang$, 
$\mathfrak M$ and $\mathfrak M'$ have the same domain
and the interpretation of the language $\lang$ in $\mathfrak M$ coincides with the one in $\mathfrak M'$.

\subsubsection*{O-minimal structures and Hardy fields}

The  $\lang_{\text{or}}$-structure of the ordered field of real numbers is denoted by  $\bar{\R}= (\R, \{<\}, \{+,-, \cdot\}, \{0, 1\})$.
Throughout the following, $\lang$ will denote an expansion of the language $\lang_{\text{or}}$. Furthermore, $\struct$ will be a $\lang$-structure with domain $\R$ (thus an expansion of $\bar{\R}$) that is also  \emph{o-minimal}, which means that any subset of $\R$ definable in $\struct$  is a finite union of points and intervals with endpoints in $\R\cup\{ - \infty, + \infty \}$.
Under the o-minimality requirement, definable sets and maps are ``well-behaved''. For example, the set $\{(x, \sin(1/x)) \mid x > 0 \}$ is not definable in any o-minimal structure. We refer the reader to~\cite{van1998tame} or~\cite{Coste1999} for more information.

We say that two definable functions $f,g : \R \to \R$ are equivalent, and we write $f \sim g$, if $f(t) =g(t)$ \emph{ultimately}, \ie\ for all $t$ large enough.
The \emph{germ} $\bm f$ of a definable function $f$ is the equivalence class of $f$ for the relation $\sim$. By abuse of notation, $\bm f$ shall also denote a representative of the germ $\bm f$.

Let $H(\struct) := \{
\bm f \mid f \text{ definable in } \struct \}$ the set of germs of functions definable in $\struct$. Each function symbol $F \in \mathcal{F}$ has a natural interpretation in $H(\struct)$, by defining $F(\bm f_1, \dots, \bm f_{n_F})$ as the germ of the definable function $t \mapsto  F(\bm f_1(t), \dots, \bm f_{n_F}(t))$. Besides, the set $\R$ is embedded into $H (\struct)$ by identifying each element $a \in \R$ with the constant function with value $a$. This provides an interpretation of the constant symbols of $\mathcal{L}$ in $H(\struct)$. Finally, given a relation $R$ of the language $\lang$ and $\bm f_1, \dots , \bm f_{n_R} \in H(\struct)$, the set $\{t \mid \struct \models R(\bm f_1(t), \dots , \bm f_{n_R}(t)) \}$ is definable, and thus consists in a finite union of points and intervals. Hence, $R(\bm f_1(t), \dots , \bm f_{n_R}(t))$ is either ultimately true or ultimately false. This provides an interpretation of $R$ over $H(\struct)$. 

Consequently, $H(\struct)$ has a natural $\lang$-structure, which we
denote by $\mathfrak H (\struct)$.
It follows from \cite[Prop.~5.9]{Coste1999} that $\mathfrak H (\struct)$ and $\struct$ have the same full theory; see
also \cite[Lemma~2.2.64]{foster2010power}. 
In other words, the following holds.
\begin{proposition} \label{prop:model_complete}
For any $\lang$-sentence $\phi$, we have $\struct \models \phi$ if and only if $\mathfrak H (\struct) \models \phi$. 
\end{proposition}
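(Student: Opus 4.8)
The plan is to prove the following apparently stronger statement by induction on the construction of formulae: for every $\lang$-formula $\phi(v_1,\dots,v_k)$ and all germs $\bm f_1,\dots,\bm f_k \in H(\struct)$,
\begin{equation}\label{eq:plan}
\mathfrak H(\struct)\models \phi(\bm f_1,\dots,\bm f_k) \quad\Longleftrightarrow\quad \struct \models \phi\big(f_1(t),\dots,f_k(t)\big)\ \text{ for all large }t .
\end{equation}
Here the right-hand side does not depend on the choice of representatives $f_i$ of the germs $\bm f_i$, so it is well posed. Specializing to $k=0$, where the truth value of a sentence does not vary with $t$, the right-hand side of \eqref{eq:plan} reads simply ``$\struct\models\phi$'', and the proposition follows.

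For the base case, when $\phi$ is atomic (either $t_1=t_2$ or $R(t_1,\dots,t_{n_R})$ for terms $t_i$), \eqref{eq:plan} holds by the very definition of $\mathfrak H(\struct)$: every term evaluates in $\mathfrak H(\struct)$ to the germ of its pointwise evaluation, and the interpretation of $R$ was defined precisely using the o-minimality fact that $\{\,t:\struct\models R(\bm f_1(t),\dots,\bm f_{n_R}(t))\,\}$ is a finite union of points and intervals and hence either ultimately true or ultimately false. The Boolean steps are routine: $\wedge$ and $\vee$ amount to intersecting, resp.\ taking the union of, two final segments of $\R$; and for $\neg$ one uses once more that the truth set $\{\,t:\struct\models\psi(\dots)\,\}$ of a subformula $\psi$, being definable in $\struct$, either contains a final segment $(a,\infty)$ or has complement containing one, so that ``ultimately true'' and ``ultimately false'' are genuinely complementary — which is exactly what is needed to pass the induction hypothesis through $\neg$.

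The crux is the existential quantifier (the universal case then follows via $\forall v\,\psi \equiv \neg\exists v\,\neg\psi$). The implication from $\mathfrak H(\struct)$ to $\struct$ is immediate: if $\bm g\in H(\struct)$ witnesses $\exists v_{k+1}\,\phi$ in $\mathfrak H(\struct)$, then by the induction hypothesis $g(t)$ witnesses it in $\struct$ for all large $t$. For the converse, assume $\struct\models\exists v_{k+1}\,\phi\big(f_1(t),\dots,f_k(t),v_{k+1}\big)$ for all large $t$; one must produce a single definable function $g$ with $\struct\models\phi\big(f_1(t),\dots,f_k(t),g(t)\big)$ ultimately. This is the one place where o-minimality is used substantively: since $\struct$ is an o-minimal expansion of $\bar{\R}$, and in particular of an ordered group, it admits definable Skolem functions (van den Dries~\cite{van1998tame}; see also~\cite{Coste1999}), so there is an $\lang$-definable $S\colon\R^k\to\R$ with $\struct\models\phi(a,S(a))$ whenever $\struct\models\exists v_{k+1}\,\phi(a,v_{k+1})$. (Alternatively one constructs such an $S$ by hand: by o-minimality and uniform finiteness the fibre $\{\,y:\struct\models\phi(a,y)\,\}$ is a finite union of boundedly many points and intervals, and one selects, uniformly in $a$, e.g.\ its least isolated point, or an endpoint or midpoint of its leftmost interval — a definable recipe.) Putting $g(t):=S\big(f_1(t),\dots,f_k(t)\big)$, a composition of functions definable in $\struct$, its germ $\bm g$ lies in $H(\struct)$ and $\struct\models\phi\big(f_1(t),\dots,f_k(t),g(t)\big)$ for all large $t$; the induction hypothesis gives $\mathfrak H(\struct)\models\phi(\bm f_1,\dots,\bm f_k,\bm g)$, hence $\mathfrak H(\struct)\models\exists v_{k+1}\,\phi(\bm f_1,\dots,\bm f_k,v_{k+1})$, which closes the induction.

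I expect the definable-choice step in the existential case to be the only real obstacle: without it one obtains, for each fixed large $t$, some real witness, but with no control of how it varies with $t$ and, crucially, no guarantee that $t\mapsto(\text{witness at }t)$ is definable — and thus no element of the Hardy field to plug in. The remaining steps are a direct unwinding of the definition of the $\lang$-structure $\mathfrak H(\struct)$ together with the one-variable o-minimal dichotomy ``a definable subset of $\R$ contains a final segment or its complement does''.
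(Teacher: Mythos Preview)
Your argument is correct and is essentially the standard proof of this fact. The paper itself does not give a proof: it simply records that the result follows from \cite[Prop.~5.9]{Coste1999} (see also \cite[Lemma~2.2.64]{foster2010power}). What you have written is, in effect, an unwinding of that reference: the induction on formula complexity, with o-minimality used twice---once for the ``ultimately true or ultimately false'' dichotomy that makes $\neg$ go through, and once, substantively, to obtain a definable Skolem function for the $\exists$ step. This is exactly the mechanism behind Coste's proposition.

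One small wording point: you write ``$\lang$-definable $S$'', but what you need (and what definable choice in o-minimal expansions of ordered groups actually gives) is a function definable \emph{with parameters}---which is precisely the notion of ``definable'' used in the paper. This is harmless here, since all that matters is that $t\mapsto S(f_1(t),\dots,f_k(t))$ is definable in $\struct$, hence its germ lies in $H(\struct)$.
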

As an expansion of $\bar{\R}$, the structure $\struct$ satisfies the axioms of the theory of real closed fields.  An
ordered field $\F$ is called \emph{real closed} if every positive element of $\F$ has a square root in $\F$ and every
odd degree polynomial with coefficients in $\F$ has at least one zero in $\F$.  By \emph{Tarski's Principle} a real
closed field has the same first-order properties as the reals. We deduce from Proposition~\ref{prop:model_complete}:
\begin{corollary}\label{th:hardy_real_closed}
The set $H(\struct)$ is a real closed field.	
\end{corollary}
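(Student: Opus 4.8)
The plan is to deduce the statement from Proposition~\ref{prop:model_complete} by a transfer argument. Recall that the class of real closed fields is axiomatized by a countable set $T_{\mathrm{rcf}}$ of $\lang_{\text{or}}$-sentences: the finitely many axioms of ordered fields, the sentence $\forall x\,\bigl(x > 0 \to \exists y\,(y \cdot y = x)\bigr)$, and, for each odd integer $n \geq 1$, a sentence expressing that every monic polynomial of degree $n$ has a root, namely
\[
\forall a_0 \cdots \forall a_{n-1}\;\exists x\;\bigl(x^n + a_{n-1} x^{n-1} + \dots + a_1 x + a_0 = 0\bigr),
\]
where $x^k$ abbreviates the $\lang_{\text{or}}$-term $x \cdot x \cdots x$ ($k$ factors). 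Since $\lang$ is an expansion of $\lang_{\text{or}}$, each member of $T_{\mathrm{rcf}}$ is in particular an $\lang$-sentence, so Proposition~\ref{prop:model_complete} applies to it.

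First I would verify that $\struct \models T_{\mathrm{rcf}}$. Classically, the ordered field $\bar{\R}$ of real numbers is real closed, i.e.\ $\bar{\R} \models T_{\mathrm{rcf}}$. By hypothesis $\struct$ is an expansion of $\bar{\R}$: it has domain $\R$ and interprets the symbols of $\lang_{\text{or}}$ exactly as $\bar{\R}$ does. Because the sentences in $T_{\mathrm{rcf}}$ mention only these symbols, their truth value in $\struct$ agrees with their truth value in $\bar{\R}$, so $\struct \models T_{\mathrm{rcf}}$. Next, applying Proposition~\ref{prop:model_complete} to each $\phi \in T_{\mathrm{rcf}}$ gives $\mathfrak H(\struct) \models \phi$, hence $\mathfrak H(\struct) \models T_{\mathrm{rcf}}$. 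By construction the interpretation of $\lang_{\text{or}}$ in $\mathfrak H(\struct)$ is exactly the pointwise arithmetic and order on germs introduced before the proposition; since it satisfies $T_{\mathrm{rcf}}$, and $T_{\mathrm{rcf}}$ contains in particular the ordered field axioms, the set $H(\struct)$ equipped with these operations is an ordered field in which every positive element has a square root and every odd-degree polynomial has a zero — that is, a real closed field. Note that this also yields, for free, the purely field-theoretic assertions (no zero divisors, existence of multiplicative inverses, $0 \neq 1$, and so on), which would otherwise have to be checked by hand for germs of definable functions.

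The argument has no serious obstacle; the only point that needs a little care is the middle step, namely the reduct/expansion bookkeeping: one must observe that the truth value of an $\lang_{\text{or}}$-sentence in a structure depends only on its $\lang_{\text{or}}$-reduct, so that it is the same in $\struct$ as in $\bar{\R}$. (Equivalently, one could first pass to the $\lang_{\text{or}}$-reduct of $\mathfrak H(\struct)$ and invoke Proposition~\ref{prop:model_complete} only for that reduct, but applying the proposition directly to the relevant sentences is cleaner.) Everything else is the routine, sentence-by-sentence application of Proposition~\ref{prop:model_complete}.
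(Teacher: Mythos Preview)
Your argument is correct and is essentially the same transfer argument the paper uses: observe that $\struct$, as an expansion of $\bar{\R}$, satisfies the first-order axioms of real closed fields, and then apply Proposition~\ref{prop:model_complete} sentence by sentence to conclude that $\mathfrak H(\struct)$ does as well. Your version simply spells out the reduct/expansion bookkeeping more carefully than the paper does.
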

We will refer to $H(\struct)$ as the \emph{Hardy field} of structure~$\struct$. In particular, $H(\struct)$ is an ordered field, and it carries a natural topology induced by the ordering. The standard topology on $\R$ coincides with the subspace topology induced from $H(\struct)$. 

A structure $\struct$ is \emph{polynomially bounded} if for any definable function $f : \R \to \R$, there exists a
natural number $n$ such that ultimately $|f(t)| \leq t^n$. Miller proved \cite{Miller1994} that in an o-minimal and
polynomially bounded expansion of $\bar{\R}$, if a definable function $f$ is not ultimately zero, then there exists an
exponent $r \in \R$ and a non-zero coefficient $c \in \R$ such that
\begin{equation}\label{eq:exponent}
  \lim_{t \rightarrow + \infty } \frac{f(t)}{t^r} = c \, .
\end{equation}
The set of such exponents $r$ forms a subfield of $\R$, called the \emph{field of exponents} of the structure~$\struct$.

\subsubsection*{Logarithmic limits of definable functions}

In the following, we will use the structure $\mystruct$ which expands $\bar \R$ by adding the family of power functions $(f_r) _{r \in \R}$,
where $f_r$ maps a positive number $t$ to $t^r$, and any non-positive number to $0$. The structure $\bar \R ^{ \R}$ is o-minimal, polynomially bounded and
its field of exponents is $\R$; see \cite{Miller1994expansions, miller2012basics}.
For the sake of readability, we shall abbreviate $H (\mystruct)$ by $\K$. We also use the notation $t^r$ as a shorthand for the germ of the power function $f_r$.
The \emph{valuation} maps any $\bm f \in \K$ to:
\[
\val(\bm f) := \lim_{t \to + \infty} \log_t  | \bm f(t) | \, ,
\]
where $\log_t(x) = \log(x) / \log (t)$.  By~\eqref{eq:exponent} the limit above is well-defined. Notice that $\val (\bm
f) = - \infty$ if $\bm f(t)$ ultimately vanishes. Since $\mystruct$ has $\R$ as its field of exponents, the valuation is a surjective map from $\K$ to $\R \cup \{ - \infty
\}$.  For $\bm f ,\bm g \geq 0$ we have:
\begin{align}\label{e-morphism}
 \val (\bm f + \bm g) = \max ( \val( \bm f), \val( \bm g)) , \quad \val (\bm f \bm g ) = \val (\bm f) + \val (\bm g) \, .
\end{align}
Moreover, if $\bm f \geq \bm g \geq 0$, then $\val(\bm f) \geq \val( \bm g)$.
Hence, the valuation map is an order-preserving homomorphism from the semiring of germs of definable functions which are ultimately non-negative to the tropical semiring. 
In the sequel, the valuation map  will be also be applied, being understood entrywise, to vectors or matrices with entries in the Hardy field $\K$. 

\subsubsection*{Comparison with other fields of generalized power series}\label{subsec-fields}
Several non-archimedean fields, which differ from the Hardy field $\K=H (\mystruct)$, have been used in the tropical literature, and it may be useful
to review alternative choices.  It is common to consider the field $\classicalpuiseux$ of formal Puiseux series with real
coefficients~\cite{kapranov,DevelinYu07, richter2005first}.  Technically, however, it is inconvenient that the valuation map from $\classicalpuiseux$ to $\trop$ is not surjective, as classical Puiseux series
have rational exponents.  This can be remedied by using the larger field of Hahn series with real coefficients, denoted by
$\llbracket\R^{\R,\leq}\rrbracket$ in~\cite{ribenboim}. An element of this field is a formal series of the form
\begin{align}
f = \sum_{\alpha \in \R} a_\alpha t^\alpha
\label{e-series}
\end{align}
with $a_\alpha\in \R$, such that the support $\{\alpha\in \R\mid a_\alpha \neq 0\}$ is well ordered. This field is known to be real closed~\cite{ribenboim}. 
An alternative to $\llbracket\R^{\R,\leq}\rrbracket$ is the subfield of generalized
formal Puiseux series, considered by Markwig~\cite{markwig2007field},
which consists of those series $f$ such that the support
$\{\alpha \in \R\mid a_\alpha \neq 0\}$ is either finite or
has $+\infty$ as the only accumulation point. It follows
from~\cite{markwig2007field} that this subfield is also real
closed. 

Our previous work~\cite{tropical+simplex,stacs} was developed using formal Hahn series. However, in the present
application, we need to work with fields of functions, thinking of $t$ as a deformation parameter.  We note that the
subfield of Markwig's field, consisting of the generalized Puiseux series that are absolutely convergent in a punctured
complex disc $0<|t|<r$, for some $r>0$,
actually coincides with the field $\mathbb{D}$ of generalized Dirichlet series originally
considered by Hardy and Riesz~\cite{hardy}, already used in the tropical setting in~\cite{ABG96}. Classical Dirichlet
series can be written as $\sum_k a_k k^s$, they are obtained from~\eqref{e-series} by substituting $t=\exp(s)$, with
$\alpha_k =\log k$.  It follows from results of van den Dries and Speissegger~\cite{Dries1998} that the field
$\mathbb{D}$ is real closed, and that the subfield $\llbracket\R^{\R,\leq}\rrbracket_{\text{cvg}}$ of
$\llbracket\R^{\R,\leq}\rrbracket$ consisting of series that are absolutely convergent in a punctured disk is also real
closed. Actually, the elements of the latter field can be identified to the germs of functions of one variable that are
definable in the o-minimal structure $\R_{\text{an},*}$, which is the expansion of the reals by restricted analytic
functions and convergent generalized power series \cite{Dries1998}.  However, it is enough to work here with the
o-minimal structure $\mystruct$, which is smaller than $\R_{\text{an},*}$.

\subsection{Tropicalization of linear programs}\label{subsec:tropicalization}
The tropical addition $\tplus$ extends to vectors and matrices by applying it coordinatewise.  Similarly, the tropical multiplication $\tdot$ gives rise to a
tropical multiplication of a scalar with a vector and, combined with $\tplus$, also to a tropical matrix multiplication.  A \emph{tropical halfspace}
of $\trop^n$ is the set of points $x \in \trop^n$ which satisfy one tropical linear inequality,
\[
\max( \alpha^+_1 + x_1, \dots, \alpha^+_n + x_n, \beta^- ) \leq \max(\alpha^-_1 + x_1, \dots, \alpha^-_n + x_n, \beta^+ ) \,,
\]
where $\alpha^+, \alpha^- \in \trop^{n}$ and $\beta^+, \beta^- \in \trop$.  
This formulation is the proper tropical analogue of the classical $\transpose{\bm{\alpha}} \bm{x} \leq \bm{\beta}$.
A \emph{tropical polyhedron} is the intersection
\[
\tropP = \{ x \in \trop^n \mid A^+ \tdot x \tplus b^- \leq A^- \tdot x \tplus b^+ \}
\] 
of finitely many tropical halfspaces, where $A^+, A^- \in \trop^{m \times n}$ and $b^+, b^- \in \trop^m$.
The tropical semiring is equipped with the order topology, which determines a product topology on $\trop^n$. Note that tropical halfspaces, and so, tropical polyhedra, are closed in this topology.

An analogue of the Minkowski--Weyl Theorem~\cite{GaubertKatz2011minimal} allows one to represent a tropical polyhedron internally in terms of extreme points and rays, meaning that there exist two finite collections of vectors
$v^1,\dots, v^r\in \trop^n$ and
$w^1, \dots,  w^s\in \trop^n$ such that
$\tropP$ can be written as the set of points
of the form 
\begin{align}
\tsum_{i = 1}^r \alpha_i \tdot v^i \; \tplus \;  \tsum_{j = 1}^s\beta_j\tdot w^j 
\label{e-minkowskiweyltropical}
\end{align}
where $\alpha_i,\beta_j\in \trop$ and $\tsum_{i\in[r]}\alpha_i$ is
equal to the tropical unit, \ie the real number $0$.  Here and below we use the common abbreviation $[r]:=\{1,2,\dots,r\}$.  We shall
say that $\tropP$ is \emph{generated} by 
$v^1,\dots, v^r$ and $w^1, \dots,  w^s$. 
Note that the ``tropical polytopes'' considered by 
Develin and Sturmfels~\cite{develin2004} are obtained
by omitting the $v^i$ terms
and by requiring the $w^j$ to have finite coordinates
in the representation~\eqref{e-minkowskiweyltropical}.

We denote by $\K_+$ the set of nonnegative elements of $\K$, and by $\K_+^n$
the \emph{positive orthant} of $\K^n$. The following fact was
already noted by Develin and Yu \cite[Proposition~2.1]{DevelinYu07} for ``tropical polytopes'' in the
sense of~\cite{develin2004}.
\begin{proposition}
\label{prop-direct}
The image under the valuation map of any polyhedron $\puiseuxP$ included in the positive orthant $\K_+^n$ is a tropical polyhedron of $\trop^n$. 
\end{proposition}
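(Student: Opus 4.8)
The plan is to push a Minkowski--Weyl representation of $\puiseuxP$ through the valuation map, in the spirit of Develin and Yu~\cite{DevelinYu07}. Since $\K$ is an ordered field (indeed real closed, by Corollary~\ref{th:hardy_real_closed}), the Minkowski--Weyl theorem is available over $\K$: assuming $\puiseuxP\neq\emptyset$ (the empty case being trivial), there are finite families $\vv^1,\dots,\vv^r\in\K^n$ and $\W^1,\dots,\W^s\in\K^n$ such that
\[
\puiseuxP=\Bigl\{\, \sum_{i=1}^r\lambda_i\vv^i+\sum_{j=1}^s\mu_j\W^j \;\Bigm|\; \lambda_i\geq0,\ \mu_j\geq0,\ \sum_{i=1}^r\lambda_i=1 \,\Bigr\}\,,
\]
with no lineality term, since $\puiseuxP\subseteq\K_+^n$ contains no line. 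First I would check that all of these generators lie in the positive orthant: each $\vv^i\in\puiseuxP\subseteq\K_+^n$, and if some ray had a negative entry $\W^j_l<0$, then for $y\in\puiseuxP$ and the positive scalar $\mu:=-y_l/\W^j_l+1$ the point $y+\mu\W^j\in\puiseuxP$ would have a negative $l$-th coordinate. Hence $\val$ is defined on every $\vv^i$ and $\W^j$ (entrywise, the field zero being sent to $-\infty$), and the goal reduces to showing that $\val(\puiseuxP)$ equals the set $\tropQ$ of tropical combinations
\[
\tsum_{i=1}^r\alpha_i\ttimes\val(\vv^i)\;\tplus\;\tsum_{j=1}^s\beta_j\ttimes\val(\W^j)\,,\qquad \alpha_i,\beta_j\in\trop,\quad\tsum_{i=1}^r\alpha_i=0\,.
\]
By the tropical analogue of the Minkowski--Weyl theorem~\cite{GaubertKatz2011minimal}, such a finitely generated set $\tropQ$ is a tropical polyhedron, which then completes the argument.

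For the inclusion $\val(\puiseuxP)\subseteq\tropQ$, I would take a point $y=\sum_i\lambda_i\vv^i+\sum_j\mu_j\W^j\in\puiseuxP$ as above and evaluate $\val(y)$ coordinatewise. In each coordinate $l$, the scalar $y_l$ is a sum of the \emph{nonnegative} elements $\lambda_i\vv^i_l$ and $\mu_j\W^j_l$ of $\K$, so the morphism property~\eqref{e-morphism} (applied to finitely many summands) turns this sum into a tropical maximum; collecting coordinates yields $\val(y)=\tsum_i\val(\lambda_i)\ttimes\val(\vv^i)\tplus\tsum_j\val(\mu_j)\ttimes\val(\W^j)$, and $\tsum_i\val(\lambda_i)=\val(\sum_i\lambda_i)=\val(1)=0$ by~\eqref{e-morphism} once more, so $\val(y)\in\tropQ$.

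For the reverse inclusion $\tropQ\subseteq\val(\puiseuxP)$, given $x=\tsum_i\alpha_i\ttimes\val(\vv^i)\tplus\tsum_j\beta_j\ttimes\val(\W^j)$ with $\tsum_i\alpha_i=0$, I would lift it by setting $\lambda_i:=t^{\alpha_i}$ and $\mu_j:=t^{\beta_j}$, with the convention $t^{-\infty}:=0$; these are nonnegative elements of $\K$ with $\val(\lambda_i)=\alpha_i$ and $\val(\mu_j)=\beta_j$. Since $\tsum_i\alpha_i=0$, some $\alpha_i$ equals $0$, so $\sigma:=\sum_i\lambda_i$ is a nonzero element of $\K_+$ with $\val(\sigma)=\max_i\alpha_i=0$; replacing each $\lambda_i$ by $\lambda_i/\sigma$ keeps it nonnegative and keeps $\val(\lambda_i)=\alpha_i$, while now $\sum_i\lambda_i=1$. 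Then $y:=\sum_i\lambda_i\vv^i+\sum_j\mu_j\W^j$ lies in $\puiseuxP$, and the coordinatewise computation of the previous paragraph gives $\val(y)=x$.

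The argument is conceptually short; the delicate points are all bookkeeping with infinities. One has to ensure that the morphism property~\eqref{e-morphism} is invoked only on sums of nonnegative elements of $\K$ --- which is exactly where the hypothesis $\puiseuxP\subseteq\K_+^n$, together with the nonnegativity of the generators $\vv^i$ and $\W^j$, is used --- and one has to verify that the renormalization by $\sigma$ is legitimate and valuation-preserving even when some $\alpha_i$ equals $-\infty$ or some entry of a generator equals $0\in\K$. I expect this careful handling of the $-\infty$ and $0$ cases, rather than any structural difficulty, to be the main thing to get right; the remainder is the routine Minkowski--Weyl lifting.
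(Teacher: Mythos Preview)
Your proposal is correct and follows essentially the same route as the paper: apply Minkowski--Weyl over $\K$, push the generating representation through the valuation using the morphism property~\eqref{e-morphism}, and lift a tropical combination back to $\puiseuxP$ by taking $t^{\alpha_i}$, $t^{\beta_j}$ and renormalizing by $\sigma=\sum_i t^{\alpha_i}$ (the paper writes this normalization as dividing by $\bm Z$). The only extra you add is the explicit check that the generators lie in $\K_+^n$, which the paper takes for granted.
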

\begin{proof}
The Minkowski--Weyl theorem is valid for a polyhedron in any ordered
field. Hence, there exists two finite collections of vectors
$\bm v^1,\dots,\bm v^r\in \K_+^n$ and
$\bm w^1, \dots, \bm w^s\in \K_+^n$ such that $\puiseuxP$
is precisely the set of combinations of the following form
\begin{align}
\bm x = \sum_{i = 1}^r \bm \alpha_i \bm v^i  + \sum_{j = 1}^s\bm \beta_j \bm w^j
\label{e-minkowskiweyl}
\end{align}
where $\bm\alpha_i,\bm\beta_j\in \K_+$ and $\sum_{i\in[r]}\bm\alpha_i =1$.
Since the valuation is a homomorphism from $\K_+$ to $\trop$
(see~\eqref{e-morphism}), $\val(\puiseuxP)$ is included
in the tropical polyhedron $\tropP$ generated
by the vectors $v^1:=\val \bm v^1,\dots,v^{r}:=\val \bm v^r$ and
$w^1:=\val \bm w^1, \dots, w^s:=\val \bm w^s$. Conversely,
any point in $\tropP$ of the form~\eqref{e-minkowskiweyltropical}
is the image under the valuation map of 
\[
 \sum_{i = 1}^r \frac{1}{\bm Z} t^{\alpha_i} \bm v^i  + \sum_{j = 1}^s t^{\beta_j} \bm w^j
\]
where $\bm Z= \sum_{i = 1}^r t^{\alpha_i}$ is such that $\val \bm Z=0$.
\end{proof}

Conversely, each tropical
polyhedron arises as the image under the valuation map of a polyhedron included in $\K_+^n$. A slightly stronger statement can be established for tropical linear programs. A \emph{tropical linear program} asks to
minimize a tropical linear function $x \mapsto \transpose{c} \tdot x$ on a tropical polyhedron, where $c \in \trop^n$.   
The Main Lemma of tropical linear programming
establishes that
each tropical linear program arises as the image of some Hardy linear program under the valuation map. This means that the tropical feasible set is the image of the Hardy feasible set, and that optimal
  solutions are sent to optimal solutions,
see~{\cite[Proposition~7]{tropical+simplex}}.
However, a tropical linear program  may have optimal solutions which do not arise as images of Hardy optimal solutions under the valuation map.
Note that the results of~\cite{tropical+simplex} were proved
when the coefficients of the linear program belong to
the field of formal generalized Puiseux series~\cite{markwig2007field}. 
The same arguments apply to other non-archimedean real closed fields with residue field $\R$
that are sent surjectively to $\trop$ by the valuation,
including the Hardy field $\K=H (\mystruct)$; see Section~\ref{subsec-fields}.

Under tropical genericity conditions, we can directly obtain a halfspace description of $\val(\puiseuxP)$ from a halfspace description of $\puiseuxP$.
Since this is relevant for this paper, we will now describe this in more detail.  To ease the connection with the tropical description, assume that
$\puiseuxP$ is given as the set of $\bm x \in \K^n$ satisfying linear inequalities of the form $\bm A \bm x \leq \bm b$.
We additionally assume
that $\puiseuxP$ is contained in the positive orthant of $\K^n$.
 
We say that the \emph{tropicalization} of a matrix $\bm M \in \K^{n \times n}$ is \emph{sign non-singular} if $\det (\bm M) \neq 0$ and all the terms
$\sign(\sigma)
\prod_{i\in[n]} \bm M_{i \sigma(i)}$ with maximal valuation among those
arising in the expansion
of 
\[
\det \bm M = \sum_{\sigma \in S_n} \sign(\sigma)\prod_{i\in[n]} {\bm M}_{i\sigma(i)}
\]
share the same sign.
The tropicalization of a rectangular matrix $\bm W \in \K^{m \times n}$ is said
to be \emph{sign generic} if for every square submatrix $\bm M$ of $\bm W$, either the tropicalization of $\bm M$ is sign non-singular,  or for every permutation $\sigma$, the term  $\prod_{i\in[n]} {\bm M}_{i\sigma(i)}$ vanishes.

For any matrix $\bm M = (\bm M_{ij}) \in \K^{m \times n}$, we denote by $\bm M^+ = (\bm M^+_{ij})$ and $\bm M^- = (\bm
M^-_{ij})$ its positive and negative parts, \ie\ $\bm M^+_{ij} = \max (\bm M_{ij}, 0)$ and $\bm M^-_{ij} = \min(\bm
M_{ij}, 0)$.  Furthermore, $\bm M_I$ will denote the submatrix of $\bm M$ formed by the rows indexed by $I \subset [m]$.
\begin{theorem}[{\cite[Theorem~15 and Corollary~16]{tropical+simplex}}]\label{thm:val_inter_commutes}
  Suppose that the Hardy polyhedron $\puiseuxP = \{ \bm x \in \K^n \mid \bm A \bm x \leq \bm b \}$ is included in the
  positive orthant of $\K^n$ and that the tropicalization of $(\bm A, \bm b)$ is sign generic.  Then,
  \[
  \val ( \puiseuxP ) = \{ x \in \trop^n \mid A^+ \tdot x \tplus b^- \leq A^- \tdot x \tplus b^+ \} \, ,
  \]
  where $(A^+ \ b^+) = \val( \bm A^+ \bm b^+)$ and $(A^- \ b^-) = \val (\bm A^- \ \bm b^-)$.
  Moreover, for any $I \subset [m]$, we have:
  \[
  \val \left ( \{ \bm x \in \puiseuxP \mid \bm A_I \bm x = \bm b_I \} \right ) = \{ x \in \val(\puiseuxP) \mid A^+_I
  \tdot x \tplus b^-_I = A^-_I \tdot x \tplus b^+_I \} \, .
  \]
\end{theorem}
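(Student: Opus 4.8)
The plan is to prove the two inclusions of the first identity separately, then deduce the face identity from it, isolating throughout the role played by sign genericity.

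\emph{The inclusion $\val(\puiseuxP)\subseteq\tropQ$}, where $\tropQ:=\{x\in\trop^n\mid A^+\tdot x\tplus b^-\le A^-\tdot x\tplus b^+\}$, needs no genericity hypothesis. Write $\bm A=\bm A^++\bm A^-$ and $\bm b=\bm b^++\bm b^-$. For $\bm x\in\puiseuxP$ the relation $\bm A\bm x\le\bm b$ is equivalent to $\bm A^+\bm x+(-\bm b^-)\le\bm b^++(-\bm A^-)\bm x$, and since $\bm x\in\K_+^n$ every entry of the four products $\bm A^+\bm x$, $-\bm b^-$, $\bm b^+$ and $(-\bm A^-)\bm x$ lies in $\K_+$. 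Applying the valuation entrywise, and using that by~\eqref{e-morphism} it is an order-preserving morphism $\K_+\to\trop$ carrying $+$ to $\tplus$ and a matrix--vector product over $\K_+$ to the corresponding tropical product, one obtains $A^+\tdot x\tplus b^-\le A^-\tdot x\tplus b^+$ with $x:=\val\bm x$; here one also uses $\val(-\bm b^-)=\val(\bm b^-)=b^-$ and $\val((-\bm A^-)\bm x)=A^-\tdot x$, since $\val$ only depends on absolute values.

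\emph{The reverse inclusion} is where sign genericity enters. Given $x\in\tropQ$, I first reduce to the case of finite coordinates: a coordinate $x_j=-\infty$ corresponds to the lift $\bm x_j=0$, that is, to deleting column $j$; the smaller system is again contained in the positive orthant and sign generic (its square submatrices are square submatrices of $(\bm A,\bm b)$), and its tropical halfspace set is the slice $\{x_j=-\infty\}$ of $\tropQ$, so an induction on $n$ applies. For finite $x$, the naive candidate is the monomial lift $\bm x_j:=t^{x_j}$, with $\val\bm x=x$. Running the computation of the first paragraph with this $\bm x$, the $i$-th rows of $\bm A^+\bm x+(-\bm b^-)$ and $\bm b^++(-\bm A^-)\bm x$ have valuations $(A^+\tdot x\tplus b^-)_i\le(A^-\tdot x\tplus b^+)_i$; whenever this is strict for all $i$ --- in particular when $x$ lies in the tropical relative interior of $\tropQ$ --- the $i$-th inequality of $\bm A\bm x\le\bm b$ holds for $t$ large and $x\in\val(\puiseuxP)$. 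The remaining points are reached through the internal description: both $\tropQ$ and $\val(\puiseuxP)$ (the latter by Proposition~\ref{prop-direct}) are tropical polyhedra, $\val(\puiseuxP)$ is tropically convex and stable under the recession directions of $\puiseuxP$, and it is contained in $\tropQ$; hence it is enough to check that every tropical vertex and extreme ray of $\tropQ$ lies in $\val(\puiseuxP)$. A tropical vertex of $\tropQ$ is a tropical Cramer solution attached to a choice $J\subset[m]$ of $n$ tight constraints, and sign non-singularity of the relevant square submatrices of $(\bm A,\bm b)$ --- exactly what ``sign generic'' asserts --- guarantees both that this tropical vertex is $\val\bm v$ for the unique solution $\bm v$ of the classical subsystem $\bm A_J\bm v=\bm b_J$ (using that sign non-singularity makes the valuation of a classical determinant equal to the tropical determinant $\tdet$, so Cramer's rule commutes with $\val$) and that $\bm v$ satisfies the constraints indexed by $[m]\setminus J$, since no cancellation occurs in those rows; extreme rays are handled identically with $\bm b$ replaced by $0$. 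Thus $\tropQ=\tconv(\dots)\tplus\tcone(\dots)\subseteq\val(\puiseuxP)$.

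\emph{The face identity.} For $I\subset[m]$, the set $\{\bm x\in\puiseuxP\mid\bm A_I\bm x=\bm b_I\}$ is the Hardy polyhedron defined by $\bm A\bm x\le\bm b$ together with $-\bm A_I\bm x\le-\bm b_I$; it is again contained in the positive orthant, and its coefficient matrix is sign generic because a square submatrix of it is a square submatrix of $(\bm A,\bm b)$ up to negating some rows, which multiplies every term of a determinant expansion by one and the same sign and hence preserves both sign non-singularity and the all-terms-vanish alternative. The tropicalization of $-\bm A_I\bm x\le-\bm b_I$ is $A^-_I\tdot x\tplus b^+_I\le A^+_I\tdot x\tplus b^-_I$, so applying the first identity to the extended system and intersecting the two tropical inequalities with $\val(\puiseuxP)$ yields exactly $\{x\in\val(\puiseuxP)\mid A^+_I\tdot x\tplus b^-_I=A^-_I\tdot x\tplus b^+_I\}$.

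\emph{Main obstacle.} The only genuinely non-routine step is the reverse inclusion at the boundary of $\tropQ$, i.e.\ for rows where the two sides of the tropical inequality have equal valuation: there the plain monomial lift may cancel its dominant monomials, and controlling the sign of that cancellation is precisely the content of sign genericity. Making this rigorous --- via the signed tropical Cramer rule at the vertices and rays, together with the verifications that $\val(\puiseuxP)$ is closed, tropically convex and stable under recession directions --- is the technical heart of the proof.
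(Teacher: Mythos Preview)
The paper does not prove this statement: it is quoted verbatim as Theorem~15 and Corollary~16 of \cite{tropical+simplex}, with no argument given here. So there is no ``paper's own proof'' to compare against, and your proposal has to be judged on its own merits.

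Your argument for the face identity has a genuine error. You claim that the extended system obtained by adjoining the rows $-\bm A_I\bm x\le-\bm b_I$ to $\bm A\bm x\le\bm b$ is again sign generic, because ``a square submatrix of it is a square submatrix of $(\bm A,\bm b)$ up to negating some rows''. This is false: a square submatrix of the extended matrix can pick \emph{both} row $i\in I$ from the original block and its negated copy from the new block. For instance, with two columns $j,k$ the resulting $2\times2$ block is $\bigl(\begin{smallmatrix}\bm A_{ij}&\bm A_{ik}\\-\bm A_{ij}&-\bm A_{ik}\end{smallmatrix}\bigr)$, whose determinant is $0$ while the two permutation terms are $-\bm A_{ij}\bm A_{ik}$ and $+\bm A_{ij}\bm A_{ik}$; if both entries are nonzero these terms have the same valuation and opposite signs, so the submatrix is neither sign non-singular nor does every permutation term vanish. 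Hence the extended matrix is \emph{not} sign generic in general, and you cannot simply invoke the first identity for it. The face statement in \cite{tropical+simplex} is proved by a different route (via basic points and the signed tropical Cramer theorem), not by this doubling trick.

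Your reverse-inclusion argument is also sketchier than it looks. You assert that a tropical vertex of $\tropQ$ is a tropical Cramer point for some $J\subset[m]$ with $|J|=n$, that the classical Cramer solution $\bm v$ of $\bm A_J\bm v=\bm b_J$ has $\val\bm v$ equal to that vertex, and that $\bm v$ satisfies the remaining constraints ``since no cancellation occurs in those rows''. Each step hides work: the characterization of tropical vertices as Cramer points is not automatic; you never check $\bm v\ge 0$ (needed for $\bm v\in\puiseuxP$); and the phrase ``no cancellation occurs'' is not an argument --- what is actually needed is that $\bm A_i\bm v-\bm b_i$ can be written, via Cramer, as a ratio of $(n{+}1)\times(n{+}1)$ minors of $(\bm A,\bm b)$, whose signs are then controlled by sign genericity. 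Without spelling this out (and handling the non-strict rows $i\notin J$ where the tropical inequality is an equality), the step does not stand.
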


\subsection{The central path of primal-dual pair of linear programs}\label{subsec:central_path_prelim}
Let us now recall the definition of the central path arising in linear programming.  The reader is referred to the text
book of Roos, Terlaky and Vial~\cite{roos} for more information.  We consider a primal linear program of the following
form
\begin{align}\tag*{$\text{LP}(A,b,c)$}
& \begin{array}{r@{\quad}l}  
\text{minimize} & \transpose{c} x \\[\jot]
\text{subject to} & A x + w = b \, , \, x \geq 0 \, , \,  w \geq 0  \\[\jot]
& (x,w) \in \R^n \times \R^m \, ,
\end{array} \qquad \qquad
\shortintertext{and its dual}
\tag*{$\text{DualLP}(A,b,c)$}
& \begin{array}{r@{\quad}l}
\text{maximize}   & - \transpose{b} y \\[\jot]
\text{subject to} & -\transpose{A} y + s = c \, , \, y \geq 0 \, , \, s \geq 0 \\[\jot]
& (y,s) \in \R^m \times \R^n \, ,
\end{array}
\end{align}
where $A \in \R^{m \times n}$, $b \in \R^m$, and $c \in \R^n$.  The primal variables $w$ and the dual variables $s$ are
referred to as \emph{slack variables}.  The reason for picking these particular forms of linear programs is that the
feasible solutions of both are non-negative.  This will allow us to apply our results on linear programs and their
tropicalization discussed in Section~\ref{subsec:tropicalization}.

Let us assume that these two linear programs admit strictly feasible solutions $(x,w)$ and $(y,s)$, \ie~such that
$x,w,y,s>0$; the latter is meant to denote that each coefficient in the respective vectors is strictly positive. In this
situation the system of equations and inequalities
\begin{equation} \label{eq:classical_central_path}
\begin{aligned}
  A x + w & = b \\
-\transpose{A} y +s & =c \\
w_i y_i & = \mu \quad \text{for all} \ i \in [m] \\
x_j s_j & = \mu \quad \text{for all} \ j \in [n] \\
x, w, y, s & > 0 
\end{aligned} 
\end{equation}
is known to have a unique solution $(x^\mu, w^\mu, y^\mu, s^\mu) \in \R^{2(m+n)}$, for any positive real number
$\mu$. The \emph{central path} of the linear programs $\text{LP}(A, b, c)$ and $\text{DualLP}(A, b, c)$ is defined as
the map $\mu \mapsto (x^\mu, w^\mu, y^\mu, s^\mu)$ defined
for positive $\mu$,
and we shall refer to $(x^\mu, w^\mu, y^\mu, s^\mu)$ as
the \emph{point of the central path with parameter} $\mu$.  The equality constraints in
\eqref{eq:classical_central_path} define a real algebraic curve, the \emph{central curve} of the dual pair of linear
programs, which has been studied in \cite{BayerLagarias89a} and \cite{de2010central}.  The central curve is the
Zariski closure of the central path.

The \emph{primal} and \emph{dual central paths} are defined as the projections of the central path onto the $(x, w)$-
and $(y,s)$-coordinates, respectively. Equivalently, given $\mu > 0$, the points $(x^\mu, w^\mu)$ and $(y^\mu, s^\mu)$
on the primal and dual central paths can be defined as the unique optimal solutions of the following pair of
\emph{logarithmic barrier} problems:
\[
\begin{array}{r@{\quad}l}
\text{minimize} & \transpose{c} x - \mu \Bigl(\sum_{j = 1}^n \log(x_j) + \sum_{i = 1}^m \log(w_i)\Bigr) \\
\text{subject to} & A x + w = b \, , \, x > 0 \, , \, w > 0 \, ,
\end{array}
\]
and:
\[
\begin{array}{r@{\quad}l}
\text{maximize} & -\transpose{b} y + \mu \Bigl(\sum_{j = 1}^n \log(s_j) + \sum_{i = 1}^m \log(y_i)\Bigr) \\
\text{subject to} &-\transpose{A} y + s = c \, , \, s > 0 \, , \, y > 0 \, .
\end{array}
\]
The uniqueness of the optimal solutions follows from the fact that the objective functions are 
strictly convex and
concave, 
respectively.  The equivalence to~\eqref{eq:classical_central_path} results from the optimality conditions of
the logarithmic barrier problems.  The main property of the central path is that the sequences
$(x^\mu, w^\mu)$ and $(y^\mu, s^\mu)$ converge to optimal solutions $(x^*, w^*)$ and $(y^*, s^*)$ of the linear programs
$\text{LP}(A,b,c)$ and $\text{DualLP}(A,b,c)$,
when $\mu$ tends to $0$.

Now suppose that the primal feasible set $\{(x,w) \in \R^{n+m} \mid A x + w = b , \ x , w \geq 0\}$ is bounded.  Then,
for $\mu \to +\infty$, the point $(x^\mu, w^\mu)$ tends to the \emph{(primal) analytic center} of that polytope, which
is defined as the unique solution $(x,w) \in (\R_{>0})^{n+m}$ of the following system of equalities:
\begin{align*}
A x + w & = b \\
- \transpose{A} \inverse{w} + \inverse{x} & = 0 \, ,
\end{align*} 
where $\inverse{\cdot}$ denotes the operation which takes the entrywise inverse of a vector, so that
\begin{align*}
\inverse{x}:=\transpose{(x_1^{-1},\dots, x_n^{-1})} \enspace .
\end{align*}
Equivalently, the analytic center is the unique optimal solution of the non-linear optimization problem
\[
\begin{array}{r@{\quad}l}
\text{maximize} & \sum_{j = 1}^n \log(x_j) + \sum_{i = 1}^m \log(w_i) \\
\text{subject to} & A x + w = b \, , \, x > 0 \, , \, w > 0 \, .
\end{array}
\]

\section{The tropicalization of the central path}\label{sec:central_path}

\subsection{Dequantization of a definable family of central paths}
\label{subsect-dequantize}

Our approach to tropicalize the central path starts out with a dual pair of linear programs over the Hardy field $\K$,
which look like the ones studied before:
\begin{align}\tag*{$\textbf{LP}(\bm A,\bm b,\bm c)$}\label{eq:lp}
& \begin{array}{r@{\quad}l}  
\text{minimize} & \transpose{\bm c} \bm x \\[\jot]
\text{subject to} & \bm A \bm x + \bm w = \bm b \, , \, \bm x \geq 0 \, , \, \bm w \geq 0 \\[\jot]
& (\bm x,\bm w) \in \K^n \times \K^m \, ,
\end{array} \\
\shortintertext{where $\bm A \in \K^{m \times n}$, $\bm b \in \K^m$ and $\bm c \in \K^n$, and}
\tag*{$\textbf{DualLP}(\bm A,\bm b,\bm c)$}\label{eq:dual_lp}
& \begin{array}{r@{\quad}l}
\text{maximize} & - \transpose{\bm b} \bm y \\[\jot]
\text{subject to} & -\transpose{\bm A} \bm y + \bm s = \bm c \, , \, \bm s \geq 0 \, , \, \bm y \geq 0 \\[\jot]
& (\bm y, \bm s) \in \K^m \times \K^n \, .
\end{array} 
\end{align}
As in Section~\ref{subsec:central_path_prelim}, we need to assume primal and dual strict feasibility:
\begin{assumption}\label{ass:hardy_central_path_defined}
There exist $(\bm x^\circ, \bm w^\circ)$ and $(\bm y^\circ, \bm s^\circ)$ with positive entries such that $\bm A \bm x^\circ + \bm w^\circ = \bm b$ and $-\transpose{\bm A} \bm y^\circ + \bm s^\circ = \bm c$.
\end{assumption}
Under this assumption, we can show the existence of the central path over the Hardy field: 
\begin{proposition}\label{prop:hardy_central_path}
For all positive $\bm\mu\in \K$, the following system
\begin{equation} \label{eq:hardy_central_path}
\begin{aligned}
  \bm A  \bm x  + \bm w&= \bm b \\
-\transpose{\bm A} \bm y + \bm s &= \bm c \\
\bm w_i \bm y_i &= \bm \mu \quad \text{ for all } i \in [m] \\
\bm x_j \bm s_j &= \bm \mu \quad \text{ for all } j \in [n] \\
\bm x, \bm  w, \bm  y,\bm s& > 0
\end{aligned}
\end{equation}
of equations and inequalities has a unique
solution
$(\bm x^{ \bm \mu}, \bm w ^{\bm \mu},\bm y^{ \bm \mu}, \bm s^{ \bm \mu})$.
\end{proposition}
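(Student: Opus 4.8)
The plan is to transfer the classical existence-and-uniqueness result for the central path (recalled as the solvability of~\eqref{eq:classical_central_path}) from the reals to the Hardy field $\K$ by a first-order logic argument, using that $\K = H(\mystruct)$ is a real closed field with the same full theory as $\struct$ (Corollary~\ref{th:hardy_real_closed} and Proposition~\ref{prop:model_complete}). The subtlety is that the statement ``for every positive $\mu$, the system~\eqref{eq:classical_central_path} has a unique solution'' is a statement about \emph{all} data $A,b,c$ and \emph{all} $\mu$, so it can be written as a single $\lang_{\text{or}}$-sentence only once we also encode strict feasibility as a hypothesis; the resulting sentence is true in $\bar{\R}$, hence in $\struct$, hence in $\mathfrak H(\struct)$ and a fortiori in $\K$.

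First I would make the classical fact precise in a form suitable for transfer: the assertion
\[
\forall A\,\forall b\,\forall c\,\forall \mu\ \Bigl[\bigl(\mu>0 \wedge \exists (x,w,y,s)\ \Psi_{\mathrm{feas}}(A,b,c,x,w,y,s)\bigr) \ \Longrightarrow\ \exists! (x,w,y,s)\ \Psi_\mu(A,b,c,x,w,y,s)\Bigr],
\]
where $\Psi_{\mathrm{feas}}$ expresses $Ax+w=b$, $-\transpose{A}y+s=c$, $x,w,y,s>0$, and $\Psi_\mu$ additionally expresses $w_iy_i=\mu$ for $i\in[m]$ and $x_js_j=\mu$ for $j\in[n]$. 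For fixed dimensions $m,n$ all of this is expressible by an $\lang_{\text{or}}$-formula with the matrix/vector entries as free variables, and ``$\exists!$'' is the usual abbreviation. I would cite the text by Roos, Terlaky and Vial~\cite{roos} (already used in Section~\ref{subsec:central_path_prelim}) for the truth of this sentence over $\R$, i.e.\ $\bar{\R}\models\phi_{m,n}$.

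Next, since $\struct$ is an expansion of $\bar{\R}$, it satisfies every $\lang_{\text{or}}$-sentence true in $\bar{\R}$; in particular $\struct\models\phi_{m,n}$. By Proposition~\ref{prop:model_complete}, $\mathfrak H(\struct)\models\phi_{m,n}$, and since this is an $\lang_{\text{or}}$-sentence it holds in the reduct of $\mathfrak H(\struct)$ to $\lang_{\text{or}}$, that is, in the ordered field $\K = H(\struct)$ (for $\struct=\mystruct$). Now I would invoke Assumption~\ref{ass:hardy_central_path_defined}: it provides $(\bm x^\circ,\bm w^\circ)$ and $(\bm y^\circ,\bm s^\circ)$ with positive entries witnessing the premise $\exists(x,w,y,s)\,\Psi_{\mathrm{feas}}$ for the data $\bm A,\bm b,\bm c$ over $\K$. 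Hence, for every positive $\bm\mu\in\K$, the conclusion of $\phi_{m,n}$ gives a unique $(\bm x^{\bm\mu},\bm w^{\bm\mu},\bm y^{\bm\mu},\bm s^{\bm\mu})\in\K^{2(m+n)}$ satisfying~\eqref{eq:hardy_central_path}, which is exactly the assertion of Proposition~\ref{prop:hardy_central_path}.

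The main obstacle is essentially bookkeeping rather than mathematics: one must be careful that the classical statement is phrased so that it becomes a genuine first-order sentence (no quantification over the integers $m,n$, which are fixed; strict feasibility built in as a hypothesis so that nothing is asserted in the degenerate case; uniqueness expressed without second-order quantifiers), and that the transfer is applied to a \emph{sentence}, not to a formula with parameters — this is why strict feasibility is moved into the antecedent and then re-supplied over $\K$ by Assumption~\ref{ass:hardy_central_path_defined}. Alternatively, and perhaps more transparently, one can replace the logical transfer of the full quantified statement by the two classical ingredients separately — (i) existence via the logarithmic barrier problems, whose strict convexity/coercivity arguments are themselves first-order expressible over a real closed field once one fixes a target value, and (ii) uniqueness via the optimality conditions — but the single-sentence transfer above is the cleanest route and I would take it.
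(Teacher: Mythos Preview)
Your proposal is correct and follows essentially the same approach as the paper: both encode the classical existence-and-uniqueness result (under strict feasibility) as a first-order $\lang_{\text{or}}$-sentence $\phi_{m,n}$ true in $\bar\R$, and then transfer it to $\K$. The only minor difference is that the paper invokes Tarski's Principle directly (since $\K$ is real closed by Corollary~\ref{th:hardy_real_closed}, any $\lang_{\text{or}}$-sentence true over $\R$ holds over $\K$), whereas you route the transfer through Proposition~\ref{prop:model_complete}; both are valid and equivalent here.
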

\begin{proof}
For an ordered field $\F$ and integers $m$ and $n$, consider the following statement:
\begin{quote}
  ``For any $\bm A \in \F^{m \times n}$, $\bm b \in \F^m$ and $\bm c \in \F^n$ satisfying Assumption~\ref{ass:hardy_central_path_defined} and any positive $\bm
  \mu \in \F$, the system~\eqref{eq:hardy_central_path} has a unique solution in $\F^{2(m+n)}$.''
\end{quote}
This is a first-order sentence, $\phi$,
which is true in the structure $\bar \R$, 
that is for $\F = \R$. As $\K$ is
real-closed, Tarski's Principle 
ensures that $\phi$ remains valid with $\F = \K$. 
\end{proof}
The \emph{Hardy central path}
is defined as the map $\bm\mu \mapsto \bm \cpath(\bm \mu):= (\bm x^{ \bm \mu}, \bm w ^{\bm \mu},\bm y^{ \bm \mu}, \bm s^{ \bm \mu})$.
Identifying the germ entries of $\bm A \in \K^{m \times n}$, $\bm b \in \K^m$ and $\bm c \in \K^n$ with any of their
representative functions, the linear programs~\ref{eq:lp} and~\ref{eq:dual_lp} over $\K$ naturally encode a parametric family of linear
programs $\text{LP}(\bm A(t),\bm b(t),\bm c(t))$ and $\text{DualLP}(\bm A(t),\bm b(t),\bm c(t))$ over $\R$.  Assumption~\ref{ass:hardy_central_path_defined} ensures that the central path of these real linear programs is well-defined for all sufficiently large $t$, say $t \geq t_0$ for some $t_0 \in \R$.
Given such a real number $t$, we can define the function
\[\cpath_t : \R \mapsto \R^{2(m+n)}\] 
which maps $\lambda$ to the point on the central path of $\text{LP}(\bm A(t),\bm b(t),\bm c(t))$ with parameter $t^\lambda$. This function constitutes a parameterization of the central path of $\text{LP}(\bm A(t),\bm b(t),\bm c(t))$. Our goal is to investigate the limit when $t \to \infty$ of the following family of functions:
\[
\log_t \cpath_t : \lambda \mapsto \log_t \cpath_t(\lambda) \, ;
\]
here, the function $\log_t$ is applied entrywise. The following result shows that the family of functions $(\log_t \cpath_t)_{t \geq t_0}$ has a point-wise limit.
\begin{lemma}\label{lem:hardy_central_path}
For any $\lambda \in \R$, the map $t \mapsto \cpath_t(\lambda)$ 
from $[t_0,\infty)$ to $\R^{2(m+n)}$ is definable in $\mystruct$. Its germ is given by $\bm \cpath(t^\lambda)$.
In particular, for all $\lambda \in \R$, we have
\[
\lim_{t \to + \infty} \log_t \cpath_t(\lambda) = \val \bm \cpath(t^\lambda)
\enspace .
\]
\end{lemma}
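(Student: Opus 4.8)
The plan is to recognize $\cpath_t(\lambda)$ as the unique solution of the classical central path system \eqref{eq:classical_central_path} in which the data $(A,b,c,\mu)$ is specialized to $(\bm A(t),\bm b(t),\bm c(t),t^\lambda)$, and then to push this pointwise description up to the level of germs, where uniqueness of the Hardy central path point (Proposition~\ref{prop:hardy_central_path}) forces the identification. The claim about the logarithmic limit is then immediate from the definition of the valuation on $\K$.

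\textbf{Definability.} First I would describe the graph of $t\mapsto\cpath_t(\lambda)$ as the set of pairs $(t,(x,w,y,s))\in[t_0,\infty)\times\R^{2(m+n)}$ such that $(x,w,y,s)$ solves \eqref{eq:classical_central_path} with $A=\bm A(t)$, $b=\bm b(t)$, $c=\bm c(t)$ and $\mu = t^\lambda$. The entries of $\bm A,\bm b,\bm c$ are germs of functions definable in $\mystruct$, the map $t\mapsto t^\lambda$ (the germ of the power function $f_\lambda$) is definable in $\mystruct$ by the very construction of that structure, and the remaining constraints of \eqref{eq:classical_central_path} are expressed in the language of ordered rings; hence this set is definable in $\mystruct$. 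By Assumption~\ref{ass:hardy_central_path_defined}, for $t\geq t_0$ the real programs $\text{LP}(\bm A(t),\bm b(t),\bm c(t))$ and their duals are strictly feasible, so this set is the graph of a total, single-valued function on $[t_0,\infty)$. Therefore $t\mapsto\cpath_t(\lambda)$ is definable in $\mystruct$, and its germ is an element of $\K = H(\mystruct)$.

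\textbf{Identifying the germ and taking the limit.} Let $\bm\xi\in\K^{2(m+n)}$ denote that germ. By the construction of the $\lang$-structure $\mathfrak H(\mystruct)$, an equation or inequality between germs holds in $\mathfrak H(\mystruct)$ exactly when it holds ultimately between representatives; since $\cpath_t(\lambda)$ satisfies \eqref{eq:classical_central_path} with the above specialization for every $t\geq t_0$ (in particular ultimately), and since the germ of $\mu=t^\lambda$ is $t^\lambda\in\K$, the germ $\bm\xi$ satisfies the Hardy central path system \eqref{eq:hardy_central_path} with $\bm\mu = t^\lambda$. By the uniqueness part of Proposition~\ref{prop:hardy_central_path} we conclude $\bm\xi = \bm\cpath(t^\lambda)$. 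Finally, every coordinate of $\cpath_t(\lambda)$ is positive for $t\geq t_0$, so $\log_t\cpath_t(\lambda) = \log_t|\cpath_t(\lambda)|$ coordinatewise; letting $t\to+\infty$ and applying the definition of the valuation on $\K$ coordinatewise gives $\lim_{t\to+\infty}\log_t\cpath_t(\lambda) = \val\bm\xi = \val\bm\cpath(t^\lambda)$.

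\textbf{Main obstacle.} The delicate point is the germ identification: one must make sure that the abstractly-defined Hardy central path point $\bm\cpath(t^\lambda)$, produced via Tarski transfer in Proposition~\ref{prop:hardy_central_path}, genuinely coincides with the germ of the concrete parametric family of real central path points. This hinges on the ``ultimately true'' interpretation of relations and functions in $\mathfrak H(\mystruct)$ together with uniqueness of solutions of \eqref{eq:hardy_central_path}; it is precisely here that working with a Hardy field of germs of definable functions, rather than with a field of formal power series, pays off, since it is what makes $t\mapsto t^\lambda$ and the entries of $\bm A(t),\bm b(t),\bm c(t)$ honest definable functions of the real parameter~$t$.
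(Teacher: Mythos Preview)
Your proof is correct and follows essentially the same approach as the paper. The only cosmetic difference is the direction of the uniqueness argument: the paper takes the Hardy point $\bm\cpath(t^\lambda)$, evaluates it at large $t$, and invokes uniqueness of the \emph{real} central path system~\eqref{eq:classical_central_path} to conclude it agrees with $\cpath_t(\lambda)$; you instead take the germ $\bm\xi$ of $t\mapsto\cpath_t(\lambda)$, check it satisfies the Hardy system~\eqref{eq:hardy_central_path}, and invoke uniqueness of the \emph{Hardy} solution from Proposition~\ref{prop:hardy_central_path}. These are two sides of the same coin.
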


\begin{proof}
For a fixed $\lambda\in \R$, the definability of the map $t\mapsto 
\cpath_t(\lambda)$ follows from the fact that,
for $t\in [t_0,\infty)$, the point
of the central path of $\text{LP}(\bm A(t),\bm b(t),\bm c(t))$
of parameter $t^\lambda$ is defined by a first order formula
in the structure $\mystruct$. 
Let $\bm z =\bm\cpath(t^\lambda)$, so that $\bm z$ is the germ
of a function of the parameter $t$.
Then, for all $t$ large enough, $\bm z(t) \in \R^{2(m+n)}$ %
satisfies the equalities and inequalities defining the point
$\cpath_t(\lambda)$, see~\eqref{eq:classical_central_path}.
Since the latter system has a unique solution, we  conclude that $\cpath_t(\lambda) = \bm z(t)$ for all $t$ large enough.
\end{proof}
Subsequently, we will refer to the point-wise limit of the maps $(\log_t \cpath_t)$ as the \emph{tropical central path}.  By the previous lemma this coincides with the image under the valuation map of the Hardy central path.

\begin{remark}\label{remark:equivalent_assumption}
As over $\R$, the conditions of Assumption~\ref{ass:hardy_central_path_defined} can be equivalently replaced by the fact
that the primal linear program~\ref{eq:lp} is strictly feasible (\ie~there is a feasible point $(\bm x^\circ, \bm w^\circ)$ with positive entries), and the set of its optimal solutions is bounded.
In particular, the latter condition is satisfied when the feasible set of~\ref{eq:lp} is bounded.	
\end{remark}

\subsection{Geometric description of the tropical central path}

The geometric description of the primal tropical central path is more easily obtained by describing the central path
via a logarithmic barrier function. In order to obtain definable barrier functions, we use the structure $\bar
\R_{\exp}$ which expands the ordered real field structure $\bar \R$ by adding the exponential function. The structure
$\bar \R_{\exp}$ is o-minimal \cite{Dries1994}. Note that every power function is definable in $\bar \R_{\exp}$, thus
the definable functions of $\mystruct$ are also definable in $\bar \R_{\exp}$. As a consequence, the Hardy field $
H(\bar \R_{\exp})$ contains $\K =H(\mystruct)$. The logarithm is definable in the structure $\mathfrak H (\bar
\R_{\exp})$ of the Hardy field $H(\bar \R_{\exp})$ using $\exp$, which is a symbol in $\bar\R_{\exp}$. Hence,
if $\bm f \in \K$ is positive, $\log( \bm f) $ belongs to the ordered field $ H(\bar \R_{\exp})$.

Following this, given $\bm \mu \in \K$ such that $\bm \mu > 0$, we define the following primal logarithmic barrier problem
\begin{equation}\label{eq:primal_barrier}
\begin{array}{r@{\quad}l}
\text{minimize} & \transpose{\bm c} \bm x - \bm \mu \Bigl(\sum_{j = 1}^n \log(\bm x_j) + \sum_{i = 1}^m \log(\bm w_i)\Bigr) \\
\text{subject to} & \bm A \bm x + \bm w = \bm b \, , \, \bm x > 0 \, , \, \bm w > 0 \, ,
\end{array}
\end{equation}
and its dual counterpart:
\begin{equation}\label{eq:dual_barrier}
\begin{array}{r@{\quad}l}
\text{maximize} & -\transpose{\bm b} \bm y + \bm \mu \Bigl(\sum_{j = 1}^n \log(\bm s_j) + \sum_{i = 1}^m \log(\bm y_i)\Bigr) \\
\text{subject to} &-\transpose{\bm A} \bm y + \bm s = \bm c \, , \, \bm s > 0 \, , \, \bm y > 0 \, .
\end{array}
\end{equation}

The following lemma relates the central path to the solutions
of the logarithmic barrier problems:
\begin{lemma}\label{lemma:barrier}
Let $\bm \mu \in \K$ such that $\bm \mu > 0$. The two problems~\eqref{eq:primal_barrier} and~\eqref{eq:dual_barrier} both have a unique solution, respectively given by the points $(\bm x^{\bm \mu}, \bm w^{\bm \mu})$ and $(\bm y^{\bm \mu}, \bm s^{\bm \mu})$.
\end{lemma}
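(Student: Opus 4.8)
The plan is to transfer the classical equivalence between the central path and the logarithmic barrier problems from $\R$ to the Hardy field by a combination of Tarski's principle and a direct convexity argument, exploiting that the relevant objective functions become definable once we pass to the larger Hardy field $H(\bar\R_{\exp})$. First I would observe that the pair $(\bm x^{\bm\mu},\bm w^{\bm\mu})$ from Proposition~\ref{prop:hardy_central_path} is feasible for~\eqref{eq:primal_barrier}: it satisfies $\bm A\bm x^{\bm\mu}+\bm w^{\bm\mu}=\bm b$ with strictly positive entries by construction. It therefore remains to show that it is the \emph{unique} optimal solution, and the same for the dual.

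The key step is to realize that the statement ``the logarithmic barrier problem~\eqref{eq:primal_barrier} has a unique optimal solution, namely the solution of~\eqref{eq:hardy_central_path}'' is expressible as a first-order sentence $\psi$ in the language $\lang_{\exp}$ of the ordered exponential field, for fixed integers $m,n$ and with $\bm A,\bm b,\bm c,\bm\mu$ ranging over field elements subject to Assumption~\ref{ass:hardy_central_path_defined}; here one uses that $\log$ is first-order definable from $\exp$, so the barrier objective and the optimality-plus-uniqueness condition are $\lang_{\exp}$-formulae. This sentence $\psi$ is true in $\bar\R_{\exp}$ by the classical theory recalled in Section~\ref{subsec:central_path_prelim} (strict convexity of the primal barrier objective on the relative interior, strict concavity of the dual one, and the first-order optimality conditions which recover~\eqref{eq:classical_central_path}). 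Since $\bar\R_{\exp}$ is o-minimal, Proposition~\ref{prop:model_complete} gives $\bar\R_{\exp}\models\psi$ iff $\mathfrak H(\bar\R_{\exp})\models\psi$; as $\K=H(\mystruct)\subset H(\bar\R_{\exp})$ and the data $\bm A,\bm b,\bm c,\bm\mu$ lie in $\K$ (hence in $H(\bar\R_{\exp})$), the sentence instantiates to exactly the claim of the lemma. One must check that the \emph{unique} solution produced inside $H(\bar\R_{\exp})$ actually lies in $\K$ and agrees with $(\bm x^{\bm\mu},\bm w^{\bm\mu})$: this follows because, by the optimality conditions, any optimal solution of~\eqref{eq:primal_barrier} solves the central-path system~\eqref{eq:hardy_central_path}, which by Proposition~\ref{prop:hardy_central_path} has its unique solution already in $\K$.

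The main obstacle is bookkeeping with the two different Hardy fields. The central path system~\eqref{eq:hardy_central_path} is purely algebraic (polynomial equalities and inequalities), so it lives over $\K$ and Tarski's Principle for real closed fields suffices; but the barrier \emph{objective} involves $\log$, which is only available in $H(\bar\R_{\exp})$. The clean way to reconcile this is: (i) work in $H(\bar\R_{\exp})$, where both the barrier problems and the algebraic system make sense; (ii) use the first-order transfer above to get existence and uniqueness of a barrier optimum there and to identify it, via stationarity, with the unique solution of~\eqref{eq:hardy_central_path}; (iii) invoke Proposition~\ref{prop:hardy_central_path} to note that this solution is $(\bm x^{\bm\mu},\bm w^{\bm\mu})\in\K^{n+m}$ already. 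Alternatively, and perhaps more transparently, one can avoid $\log$ entirely in the transferred sentence by phrasing optimality directly through the Karush--Kuhn--Tucker / stationarity equations $\transpose{\bm c}-\bm\mu\,\inverse{\bm x}+\transpose{\bm A}\bm\lambda=0$, $-\bm\mu\,\inverse{\bm w}-\bm\lambda=0$ together with $\bm A\bm x+\bm w=\bm b$, which \emph{are} $\lang_{\text{or}}$-formulae after clearing denominators, and separately transferring the (first-order, purely algebraic) fact that this stationary point is the unique minimizer because it is the unique feasible point satisfying the complementarity relations $\bm x_j\bm s_j=\bm w_i\bm y_i=\bm\mu$. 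I would present the argument in the second style for the primal, then note the dual case is symmetric, obtained by the same transfer applied to the sentence built from~\eqref{eq:dual_barrier}.
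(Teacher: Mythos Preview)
Your proposal is correct and follows essentially the same route as the paper: formulate the classical equivalence as a first-order sentence in the language of $\bar\R_{\exp}$, invoke Proposition~\ref{prop:model_complete} to transfer it to $\mathfrak H(\bar\R_{\exp})$, and then restrict to data in $\K\subset H(\bar\R_{\exp})$. Your additional care in checking that the optimal solution actually lands in $\K$ (via the algebraic system~\eqref{eq:hardy_central_path} and Proposition~\ref{prop:hardy_central_path}) makes explicit a step the paper leaves implicit, and your alternative KKT-based formulation is a reasonable variant but not needed.
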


\begin{proof}

  As noted in Section~\ref{subsec:central_path_prelim}, given $m$ and $n$, the following statement
  \begin{quote}
    ``For any $\bm A \in \F^{m \times n}$, $\bm b \in \F^m$ and $\bm c \in \F^n$ which satisfy
    Assumption~\ref{ass:hardy_central_path_defined} and any positive $\bm \mu \in \F$, the optimization
    problems~\eqref{eq:primal_barrier} and~\eqref{eq:dual_barrier} both have a unique solution, respectively given by the points $(\bm x, \bm w)$ and $(\bm y, \bm s)$ where $(\bm x, \bm w, \bm y, \bm s)$ is the unique solution
    of~\eqref{eq:hardy_central_path}.''
  \end{quote}
  is a valid sentence in the structure $\bar\R_{\exp}$ for $\F = \R$.  Since the structure $\bar\R_{\exp}$ is o-minimal,
  by Proposition~\ref{prop:model_complete}, that sentence is also true in $\mathfrak H (\bar\R_{\exp})$, \ie~for $\F =
  H( \bar \R_{\exp})$. It follows that the sentence is valid when $\bm A$, $\bm b$, $\bm c$ and $\bm \mu$ have entries
  in $\K \subset H(\bar \R_{\exp})$.
\end{proof}

A set $C \subset \trop^n$ is said to be \emph{tropically convex} if $\alpha \tdot u \tplus \beta \tdot v \in C$ as soon
as $u,v \in C$ and the tropical sum of $\alpha, \beta \in \trop$ is equal to the tropical unit $0$. If $C$ is a
non-empty tropically convex set, the supremum $\sup(u,v) = u \tplus v$ with respect to the partial order of $\trop^n$
also belongs to $C$. If in addition $C$ is compact, then the supremum of an arbitrary subset of $C$ is well-defined and
belongs to $C$. Consequently, there is a unique element in $C$ which is the coordinate-wise maximum of all elements in
$C$. We call it the \emph{(tropical) barycenter} of $C$, as it is the mean of $C$ with respect to the uniform idempotent
measure.  In particular, any non-empty and bounded tropical polyhedron has a tropical barycenter, since it is compact and
tropically convex.

We denote by $\puiseuxP := \{(\bm x, \bm w) \in \K^{n+m} \mid \bm A \bm x + \bm w = \bm b, \ \bm x, \bm w \geq 0 \}$ and $\puiseuxQ := \{(\bm y, \bm s) \in \K^{m+n} \mid - \transpose{\bm A} \bm y + \bm s = \bm c, \ \bm y, \bm s \geq 0\}$ the sets of feasible solutions of~\ref{eq:lp} and~\ref{eq:dual_lp} respectively. Since none of these sets are empty (thanks to Assumption~\ref{ass:hardy_central_path_defined}), the two linear programs~\ref{eq:lp} and~\ref{eq:dual_lp} have the same optimal value $\bm \nu \in \K$, and they admit optimal solutions $(\bm x^*, \bm w^*)$ and $(\bm y^*, \bm s^*)$, respectively. This is a consequence of strong duality, which is still valid over the ordered field $\K$.

Let us define $\tropP := \val(\puiseuxP)$, $\tropQ := \val(\puiseuxQ)$, $(x^*, w^*) := \val(\bm x^*, \bm w^*)$ and $(y^*, s^*) := \val (\bm y^*, \bm s^*)$. We introduce the map $\troppath : \R \to \R^{2(m+n)}$ defined by $\troppath(\lambda) := (x^\lambda, w^\lambda, y^\lambda, s^\lambda)$, where $(x^\lambda, w^\lambda)$ is the barycenter of the tropical polyhedron
\[
\tropP^\lambda := \tropP \cap \Bigl\{(x,w) \in \trop^{n+m} \mid \bigl(\transpose{(s^*)} \tdot x\bigr) \tplus \bigl(\transpose{(y^*)} \tdot w\bigr) \leq \lambda \Bigr\}\, ,
\]
and similarly, $(y^\lambda, s^\lambda)$ is the barycenter of the tropical polyhedron:
\[
\tropQ^\lambda := \tropQ \cap \Bigl\{(y,s) \in \trop^{m+n} \mid \bigl(\transpose{(w^*)} \tdot y \bigr) \tplus \bigl(\transpose{(x^*)} \tdot s \bigr) \leq \lambda \Bigr\}\, .
\]
We point out that the quantities $ \bigl(\transpose{(s^*)} \tdot x\bigr) \tplus \bigl(\transpose{(y^*)} \tdot w\bigr) $
and $\bigl(\transpose{(w^*)} \tdot y \bigr) \tplus \bigl(\transpose{(x^*)} \tdot s \bigr)$ can be interpreted as the
tropical analogues of the optimality gaps in the primal and the dual setting, respectively.  More precisely, if $(\bm x,
\bm w) \in \puiseuxP$, the gap between the value of the objective function at $(\bm x, \bm w)$ and the optimal value is
given by
\begin{equation}\label{eq:primal_gap}
\transpose{\bm c} \bm x - \bm \nu = \transpose{(\bm s^*)} \bm x + \transpose{(\bm y^*)} \bm w \, ,
\end{equation}
as a consequence of the fact that $\bm \nu = -\transpose{\bm b} \bm y^*$. Since all the terms in the right-hand side of~\eqref{eq:primal_gap} are non-negative, we deduce that 
\begin{equation}\label{eq:tropical_gap}
\val(	\transpose{\bm c} \bm x - \bm \nu) = \bigl(\transpose{(s^*)} \tdot x\bigr) \tplus \bigl(\transpose{(y^*)} \tdot w\bigr) \ ,
\end{equation}
where $(x,w) = \val(\bm x, \bm w) \in \tropP$. We now prove the following characterization:
\begin{theorem}\label{th:geometric_characterization}
The tropical central path coincides with the map $\troppath$. 
\end{theorem}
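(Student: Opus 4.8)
The plan is to prove the primal and dual coordinate blocks separately: writing $\bm\mu := t^\lambda \in \K$ (so $\val\bm\mu = \lambda$), I will show $\val(\bm x^{\bm\mu},\bm w^{\bm\mu}) = (x^\lambda,w^\lambda)$ and $\val(\bm y^{\bm\mu},\bm s^{\bm\mu}) = (y^\lambda,s^\lambda)$. By Lemma~\ref{lem:hardy_central_path} and the discussion preceding the theorem, the pair $\val(\bm x^{\bm\mu},\bm w^{\bm\mu},\bm y^{\bm\mu},\bm s^{\bm\mu})$ is exactly the value at $\lambda$ of the tropical central path, so this suffices. I treat the primal block in detail; the dual one is entirely symmetric, replacing the primal barrier problem~\eqref{eq:primal_barrier} by~\eqref{eq:dual_barrier} and the identity~\eqref{eq:primal_gap} by its dual counterpart $\bm\nu + \transpose{\bm b}\bm y = \transpose{(\bm x^*)}\bm s + \transpose{(\bm w^*)}\bm y$. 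Set $(\bar x,\bar w) := \val(\bm x^{\bm\mu},\bm w^{\bm\mu})$. Two preliminary remarks: first, all coordinates of $(\bar x,\bar w)$ lie in $\R$, because $\bm x^{\bm\mu},\bm w^{\bm\mu}$ are strictly positive in the ordered field $\K$, hence not the zero germ, hence of finite valuation; second, $(\bar x,\bar w)\in\tropP^\lambda$, because $(\bm x^{\bm\mu},\bm w^{\bm\mu})\in\puiseuxP$ gives $(\bar x,\bar w)\in\val(\puiseuxP)=\tropP$, while the standard duality-gap identity yields $\transpose{\bm c}\bm x^{\bm\mu}-\bm\nu \le (n+m)\bm\mu$, so by~\eqref{eq:tropical_gap} and order-preservation of $\val$ we get $\transpose{(s^*)}\tdot\bar x \tplus \transpose{(y^*)}\tdot\bar w = \val(\transpose{\bm c}\bm x^{\bm\mu}-\bm\nu) \le \val((n+m)\bm\mu) = \lambda$. (That $\tropP^\lambda$ is also bounded, hence compact and equipped with a barycenter, follows routinely from Remark~\ref{remark:equivalent_assumption}: the objective sublevel sets of the bounded-optimal-face polyhedron $\puiseuxP$ are bounded, and $\tropP^\lambda$ is contained in the valuation of such a sublevel set.)

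The heart of the argument is the inequality
\[
\textstyle\sum_{j}x_j+\sum_{i}w_i \;\le\; \sum_{j}\bar x_j+\sum_{i}\bar w_i \qquad\text{for every }(x,w)\in\tropP^\lambda\text{ with finite coordinates.}
\]
To prove it, use Proposition~\ref{prop-direct} to lift $(x,w)$ to a point $(\bm x,\bm w)\in\puiseuxP$ with $\val(\bm x,\bm w)=(x,w)$; finiteness of the coordinates forces this lift to have strictly positive entries. By~\eqref{eq:tropical_gap} it satisfies $\val(\transpose{\bm c}\bm x-\bm\nu)\le\lambda$. Now Lemma~\ref{lemma:barrier} tells us that $(\bm x^{\bm\mu},\bm w^{\bm\mu})$ minimizes the barrier objective in~\eqref{eq:primal_barrier} over strictly feasible points; testing it against the competitor $(\bm x,\bm w)$, subtracting $\bm\nu$ throughout, rearranging, dividing by $\bm\mu>0$, and dropping the nonnegative term $(\transpose{\bm c}\bm x^{\bm\mu}-\bm\nu)/\bm\mu$ gives
\[
\log\Bigl(\prod_j\bm x_j\prod_i\bm w_i\Bigr) \;\le\; \log\Bigl(\prod_j\bm x^{\bm\mu}_j\prod_i\bm w^{\bm\mu}_i\Bigr) + \frac{\transpose{\bm c}\bm x-\bm\nu}{\bm\mu}\,.
\]
Since $(\transpose{\bm c}\bm x-\bm\nu)/\bm\mu$ is a nonnegative germ of valuation $\val(\transpose{\bm c}\bm x-\bm\nu)-\lambda\le0$, it is ultimately bounded by a constant $C$; dividing the display by $\log t$, invoking $\log_t(\text{product of positive entries})\to\sum$ of their valuations (a consequence of~\eqref{e-morphism} and the definition of $\val$), and letting $t\to+\infty$ yields the claimed inequality.

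The last step is to upgrade this to a coordinatewise domination using tropical convexity. The set $\tropP^\lambda$ is tropically convex, being the intersection of the tropically convex set $\tropP$ with a tropical sublevel set, hence it is closed under the binary supremum $\sup(u,v)=u\tplus v=\zero\tdot u\tplus\zero\tdot v$. Given an arbitrary $(x,w)\in\tropP^\lambda$, the point $(x,w)\tplus(\bar x,\bar w)$ again lies in $\tropP^\lambda$ and has finite coordinates (each dominates the corresponding finite coordinate of $(\bar x,\bar w)$). Applying the inequality of the previous paragraph to it and comparing with $\sum_j\bar x_j+\sum_i\bar w_i$ forces $\sum_j(\max(x_j,\bar x_j)-\bar x_j)+\sum_i(\max(w_i,\bar w_i)-\bar w_i)=0$; since every summand is nonnegative, $x\le\bar x$ and $w\le\bar w$ coordinatewise. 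Thus $(\bar x,\bar w)$ is the coordinatewise maximum of $\tropP^\lambda$, i.e.\ its tropical barycenter, so $(\bar x,\bar w)=(x^\lambda,w^\lambda)$. The identical argument applied to the dual barrier problem gives $\val(\bm y^{\bm\mu},\bm s^{\bm\mu})=(y^\lambda,s^\lambda)$, and combining the two blocks proves that the tropical central path coincides with $\troppath$.

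I expect the main obstacle to be the passage to the logarithmic limit in the barrier optimality inequality: one must check that the perturbation term $(\transpose{\bm c}\bm x-\bm\nu)/\bm\mu$ and the analogous term along the central path are genuinely bounded (so that their contribution after division by $\log t$ vanishes), and that the lift supplied by Proposition~\ref{prop-direct} really can be taken strictly positive whenever the tropical point has finite entries; everything else is a bookkeeping exercise built on Lemma~\ref{lemma:barrier}, Equation~\eqref{eq:tropical_gap}, and the sup-closure of tropically convex sets.
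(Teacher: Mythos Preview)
Your proof is correct and follows essentially the same route as the paper's: both rest on Lemma~\ref{lemma:barrier} (the barrier characterization of the central path point), pass to the Hardy field $H(\bar\R_{\exp})$ to make the logarithmic terms meaningful, analyze the asymptotics $\log\bm x_j(t)=\val(\bm x_j)\log t+O(1)$, and conclude that $\val(\bm x^{\bm\mu},\bm w^{\bm\mu})$ maximizes the linear functional $\phi(x,w)=\sum_j x_j+\sum_i w_i$ over $\tropP^\lambda$.

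There are two small organizational differences worth noting. First, to place $\val(\bm x^{\bm\mu},\bm w^{\bm\mu})$ in $\tropP^\lambda$, the paper splits feasible points into ``first kind'' ($\val(\transpose{\bm c}\bm x-\bm\nu)\le\lambda$) and ``second kind'' and argues by comparison of asymptotics that the barrier minimizer must be of the first kind; you instead invoke the duality-gap bound $\transpose{\bm c}\bm x^{\bm\mu}-\bm\nu\le(n+m)\bm\mu$ directly, which is slightly more economical. Second, to pass from ``maximizes $\phi$'' to ``is the coordinatewise maximum'', the paper simply states that the barycenter is the unique maximizer of $\phi$, while you spell out the sup-closure trick $(x,w)\mapsto(x,w)\tplus(\bar x,\bar w)$; these are the same argument at different levels of detail. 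Your aside about lifting via Proposition~\ref{prop-direct} is fine, though strictly speaking any $(x,w)\in\tropP=\val(\puiseuxP)$ has a preimage by definition; the positivity of the lift when the tropical coordinates are finite follows as you say.
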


\begin{proof}
Let us fix $\lambda \in \R$, and let $\bm \mu := t^\lambda$. We restrict our attention to the proof of $\val(\bm x^{\bm \mu}, \bm w^{\bm \mu}) = (x^\lambda, w^\lambda)$, as the proof of $\val(\bm y^{\bm \mu}, \bm s^{\bm \mu}) = (y^\lambda, s^\lambda)$ is similar.

First, we point out that the primal logarithmic barrier problem~\eqref{eq:primal_barrier} is equivalent to minimizing
the following function
\[
f : (\bm x,\bm w) \mapsto \frac{\transpose{\bm c} \bm x - \bm \nu}{\bm \mu} - \Bigl(\sum_{j = 1}^n \log(\bm x_j) + \sum_{i = 1}^m \log(\bm w_i) \Bigr)
\]
over the elements $(\bm x, \bm w) \in \puiseuxP$ which additionally satisfy $\bm x > 0$, $\bm w > 0$. Recall that $f(\bm x, \bm w)$ is the germ of a  function definable in the structure $\bar \R_{\exp}$, so that it makes sense to consider the real number $[f(\bm x, \bm w)](t)$ for $t$ sufficiently large.

Let us consider $(\bm x, \bm w) \in \puiseuxP$ such that $\bm x, \bm w > 0$.  We distinguish two cases. First, assume
$\val(\transpose{\bm c} \bm x - \bm \nu) \leq \lambda$. We call a point $(\bm x,\bm w)$ with this property a point of
the \emph{first kind}.  Since $\transpose{\bm c} \bm x \geq \bm \nu$, the term $[(\transpose{\bm c} \bm x - \bm \nu)/\bm
\mu](t)$ in $[f(\bm x, \bm w)](t)$ is asymptotically of the form $p t^\alpha + o(t^\alpha)$, for some $\alpha, p \in \R$
with $\alpha \leq 0$ and $p \geq 0$. Moreover, the terms $[\log \bm x_j](t)$ and $[\log \bm w_i](t)$ are of the form
$\val(\bm x_j) \log t + O(1)$ and $\val(\bm w_i) \log t + O(1)$. As a consequence, we can write
\begin{equation}\label{eq:asymptotic}
\bigl[f(\bm x, \bm w)\bigr](t) = - \Bigl(\sum_{j = 1}^n \val(\bm x_j) + \sum_{i = 1}^m \val(\bm w_i)\Bigr) \log t + O(1) \, .
\end{equation}
Observe that a point of the first kind always exists.
Indeed, by Assumption~\ref{ass:hardy_central_path_defined}, there is a strictly feasible
point $(\bm x^\circ, \bm w^\circ)$. 
Let $\bm \gamma \in \K$ be such that $0 \leq \bm \gamma \leq 1$,
with a sufficiently small valuation. Then the point
$(\bm x, \bm w) = (1 - \bm \gamma) (\bm x^*, \bm w^*) + \bm \gamma (\bm x^\circ, \bm w^\circ)$ is of the first kind.

Consider now a point $(\bm x,\bm w)$ of the \emph{second kind}, meaning that $\val(\transpose{\bm c} \bm x - \bm \nu) >
\lambda$.  The term which asymptotically dominates in $[f(\bm x, \bm w)](t)$ is given by $[(\transpose{\bm c} \bm x -
\bm \nu)/\bm \mu](t)$, since it is of the form $p t^{\alpha} + o(t^{\alpha})$, with $\alpha > 0$ and $p > 0$.  By
comparing the asymptotics of $[f(\bm x, \bm w)](t)$ for points of the first and of the second kinds, we deduce that the
minimization problem~\eqref{eq:primal_barrier} over the entire Hardy polyhedron $\puiseuxP$ attains its unique optimum
$(\bm x^{\bm \mu}, \bm w^{\bm \mu})$ at a point of the first kind.

Let $\puiseuxS$ be the set of points of the first kind.  By~\eqref{eq:asymptotic}, we infer that $\val(\bm x^{\bm \mu},
\bm w^{\bm \mu})$ necessarily maximizes the function
\[ \phi : (x,w) \mapsto \sum_{j = 1}^n x_j + \sum_{i = 1}^m w_i
\]
as $(x,w)$ ranges over the set $\val(\puiseuxS)$. Using~\eqref{eq:tropical_gap}, we have $\val(\puiseuxS) =
\tropP^\lambda \cap \R^{m+n}$, hence $\val(\bm x^{\bm \mu}, \bm w^{\bm \mu})$ actually maximizes $\phi$ over the
tropical polyhedron $\tropP^\lambda$. It follows that the set $\tropP^\lambda$ is necessarily bounded, since otherwise,
the function $\phi$ would be unbounded.  In particular, the tropical barycenter $(x^\lambda, w^\lambda)$ of
$\tropP^\lambda$ is well defined. The latter point is the unique maximizer of the function $\phi$ over
$\tropP^\lambda$. We conclude that $\val(\bm x^{\bm \mu}, \bm w^{\bm \mu}) = (x^\lambda, w^\lambda)$.
\end{proof}

We define the \emph{primal tropical central path} 
as the projection of the tropical central path onto the $(m+n)$ first
coordinates, or equivalently, as the function which maps $\lambda \in \R$ to $(x^\lambda, w^\lambda)$.
The \emph{dual tropical central path} is defined similarly.
We point out that the dual tropical central path is completely determined by the primal one (and vice versa) by the relations:
\begin{equation}
x^\lambda_j \tdot s^\lambda_j = w^\lambda_i \tdot y^\lambda_i = \lambda \label{eq:primal_dual}
\end{equation}
for all $\lambda \in \R$, $i \in [m]$ and $j \in [n]$. This is a consequence of the fact that $\val (\bm \cpath(\lambda)) = \troppath(\lambda)$ by 
Theorem~\ref{th:geometric_characterization}, and that the point $\bm \cpath(\lambda) = (\bm x, \bm w, \bm y, \bm s)$ satisfies $\bm x_j \bm s_j = \bm w_i \bm y_i = t^\lambda$ for all $i,j$. Applying the valuation map to these relations yields~\eqref{eq:primal_dual}.

The following corollary exhibits a special case in which the primal tropical central path has a remarkable formulation in terms of sublevel sets of the objection function.
\begin{corollary}\label{cor:geometric_characterization}
  Let $\bm c \geq 0$, and suppose that the optimal value of the dual pair of linear programs~\ref{eq:lp}
  and~\ref{eq:dual_lp} equals $0$.

  Then, for all $\lambda \in \R$, the primal component $(x^\lambda, w^\lambda)$ of the tropical central path is given by
  the barycenter of the tropical polyhedron $\tropP \cap \{(x, w) \in \trop^{n+m} \mid \transpose{c} \tdot x \leq
  \lambda \}$, where $c := \val(\bm c)$.
\end{corollary}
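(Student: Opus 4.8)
The plan is to deduce the corollary directly from Theorem~\ref{th:geometric_characterization}, by showing that the two hypotheses force the tropical primal optimality-gap function to coincide, on the whole tropical feasible set $\tropP$, with the function $(x,w)\mapsto\transpose{c}\tdot x$. Concretely, by Theorem~\ref{th:geometric_characterization} the point $(x^\lambda,w^\lambda)$ is the tropical barycenter of
\[
\tropP^\lambda = \tropP \cap \Bigl\{(x,w)\in\trop^{n+m} \mid \bigl(\transpose{(s^*)}\tdot x\bigr)\tplus\bigl(\transpose{(y^*)}\tdot w\bigr)\leq\lambda\Bigr\} \, ,
\]
so it is enough to prove that $\bigl(\transpose{(s^*)}\tdot x\bigr)\tplus\bigl(\transpose{(y^*)}\tdot w\bigr)=\transpose{c}\tdot x$ for every $(x,w)\in\tropP$; then $\tropP^\lambda$ equals $\tropP\cap\{(x,w)\mid\transpose{c}\tdot x\leq\lambda\}$, and the two barycenters are one and the same.

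The argument has two steps. First I would invoke the tropical gap identity~\eqref{eq:tropical_gap}: for $(\bm x,\bm w)\in\puiseuxP$ with valuation $(x,w)$ it reads $\val(\transpose{\bm c}\bm x-\bm\nu)=\bigl(\transpose{(s^*)}\tdot x\bigr)\tplus\bigl(\transpose{(y^*)}\tdot w\bigr)$, and since $\tropP=\val(\puiseuxP)$ this covers every point of $\tropP$. At this stage the hypothesis that the common optimal value vanishes, $\bm\nu=0$, lets me drop $\bm\nu$, leaving $\val(\transpose{\bm c}\bm x)$. Second, the hypothesis $\bm c\geq 0$ together with primal feasibility $\bm x\geq 0$ makes every summand of $\transpose{\bm c}\bm x=\sum_j\bm c_j\bm x_j$ non-negative, so the homomorphism property~\eqref{e-morphism} of the valuation on non-negative germs yields $\val(\transpose{\bm c}\bm x)=\max_j\bigl(\val(\bm c_j)+\val(\bm x_j)\bigr)=\max_j(c_j+x_j)=\transpose{c}\tdot x$ with $c=\val(\bm c)$; the cases where some $\bm c_j$ or $\bm x_j$ is ultimately zero, \ie\ has valuation $-\infty$, are consistent with tropical arithmetic. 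Chaining the two equalities gives the required identity on $\tropP$, and the corollary follows.

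The main obstacle is essentially nonexistent: once Theorem~\ref{th:geometric_characterization} is in place, the corollary is a one-line specialization, and the only thing demanding a little care is the bookkeeping --- checking that~\eqref{eq:tropical_gap} really applies to \emph{all} of $\tropP$ (not just to the points of the first kind used in the proof of the theorem), and that the valuation computation for $\transpose{\bm c}\bm x$ remains valid at boundary points where some coordinates vanish, so that the identification of sublevel sets, hence of barycenters, holds unconditionally.
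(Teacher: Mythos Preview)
Your proof is correct and follows essentially the same route as the paper's own argument: invoke~\eqref{eq:tropical_gap} with $\bm\nu=0$ to identify the tropical gap with $\val(\transpose{\bm c}\bm x)$, then use $\bm c,\bm x\geq 0$ and the morphism property of the valuation to obtain $\transpose{c}\tdot x$, and conclude via Theorem~\ref{th:geometric_characterization}. The paper's proof is simply a terser version of what you wrote.
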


\begin{proof}
  Using~\eqref{eq:tropical_gap}, given $(x, w) \in \tropP$ and $(\bm x, \bm w) \in \puiseuxP$ such that $(x,w) =
  \val(\bm x, \bm w)$, we know that $\val(\transpose{\bm c} \bm x) = \bigl(\transpose{(s^*)} \tdot x\bigr) \tplus
  \bigl(\transpose{(y^*)} \tdot w\bigr)$. As $\bm c$ and $\bm x$ both have non-negative components, we have
  $\val(\transpose{\bm c} \bm x) = \transpose{c} x$. The claim now follows from
  Theorem~\ref{th:geometric_characterization}.
\end{proof}
An analogous statement can be derived when $\bm b \geq 0$ for the dual component $(y^\lambda, s^\lambda)$ of the
tropical central path.

Similar to the classical case, if the Hardy polyhedron $\puiseuxP$ is bounded, we can define the \emph{analytic center}
of $\puiseuxP$ as the unique solution $(\bm x, \bm w) \in \K^{n+m}$ of the system
\begin{align*}
\bm A \bm x + \bm w & = \bm b \\
- \transpose{\bm A} \inverse{\bm w} + \inverse{\bm x} & = 0 \\
\bm x, \bm w > 0 
\end{align*}
of equations and inequalities, recalling that $\inverse{\cdot}$ denotes the entrywise inverse.  This is due to the fact that $\K$ is real-closed. Using the arguments of the proof of
Lemma~\ref{lemma:barrier}, we can prove that the analytic center is the unique maximizer of the function $(\bm x, \bm w)
\mapsto \sum_{j = 1}^m \log \bm x_j + \sum_{i = 1}^m \log \bm w_i$ over the set $\puiseuxP$. The germ of the analytic
center of the polytopes $\puiseuxP(t)$ (with $t$ large enough) is precisely the analytic center of $\puiseuxP$. By
taking a sufficiently large $\lambda$ in the characterization of Theorem~\ref{th:geometric_characterization}, we deduce the following corollary:
\begin{corollary}\label{coro:analytic_center}
Suppose that the polyhedron $\puiseuxP$ is bounded. Then, the image under the valuation map of the analytic center of $\puiseuxP$ coincides with the tropical barycenter of $\tropP = \val(\puiseuxP)$.
\end{corollary}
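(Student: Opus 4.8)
The plan is to deduce the corollary from Theorem~\ref{th:geometric_characterization} by pushing the parameter $\lambda$ to $+\infty$, which is the tropical shadow of the classical fact that the analytic center is the limit of the central path as the barrier parameter grows. I would split the argument into two steps and then combine them.

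\emph{Step 1: the sublevel constraint becomes redundant for $\lambda$ large.} Since $\puiseuxP$ is non-empty (Assumption~\ref{ass:hardy_central_path_defined}) and bounded, every coordinate function is bounded on $\puiseuxP$ by an element of $\K$, so $\tropP=\val(\puiseuxP)$ is non-empty and bounded from above, hence --- being a tropical polyhedron, thus closed --- compact, and it carries a tropical barycenter, say $(x^\infty,w^\infty)$. The same reasoning bounds the tropical linear form $(x,w)\mapsto\bigl(\transpose{(s^*)}\tdot x\bigr)\tplus\bigl(\transpose{(y^*)}\tdot w\bigr)$ from above on $\tropP$ by some $\bar\lambda\in\R$, so $\tropP^\lambda=\tropP$ for every $\lambda\ge\bar\lambda$. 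Then the primal component $(x^\lambda,w^\lambda)$ of $\troppath(\lambda)$, being by definition the barycenter of $\tropP^\lambda$, equals $(x^\infty,w^\infty)$; and by Theorem~\ref{th:geometric_characterization} combined with Lemma~\ref{lem:hardy_central_path}, $\val(\bm x^{\bm\mu},\bm w^{\bm\mu})=(x^\infty,w^\infty)$ for $\bm\mu=t^\lambda$ with $\lambda\ge\bar\lambda$.

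\emph{Step 2: the Hardy central path reaches the analytic center up to valuation.} Let $(\bm x^{\mathrm{ac}},\bm w^{\mathrm{ac}})$ be the analytic center of $\puiseuxP$. The statement ``for a linear program of the above form whose primal feasible set is bounded and contains a strictly feasible point, and for every $\varepsilon>0$, there is $\mu_0>0$ such that the primal central path point with parameter $\mu$ is within $\varepsilon$ of the analytic center whenever $\mu\ge\mu_0$'' is a first-order sentence true over $\R$; since $\K$ is real closed, Tarski's Principle transfers it to $\K$. Feeding $\varepsilon:=t^{-N}$ into it produces some $\bm\mu_0\in\K$, and for $\lambda>\val(\bm\mu_0)$ one has $t^\lambda>\bm\mu_0$ in $\K$ (the positive germ $t^\lambda/\bm\mu_0$ has positive valuation, hence tends to $+\infty$), so $\norm{(\bm x^{\bm\mu},\bm w^{\bm\mu})-(\bm x^{\mathrm{ac}},\bm w^{\mathrm{ac}})}\le t^{-N}$ for $\bm\mu=t^\lambda$. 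Since $(\bm x^{\mathrm{ac}},\bm w^{\mathrm{ac}})$ has strictly positive entries, each of its coordinates is a nonzero, hence finitely valued, element of the Hardy field $\K$; choosing $N$ larger than the absolute values of these finitely many valuations, a perturbation of valuation at most $-N$ cannot alter the valuation of any coordinate, so $\val(\bm x^{\bm\mu},\bm w^{\bm\mu})=\val(\bm x^{\mathrm{ac}},\bm w^{\mathrm{ac}})$.

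Combining the two steps for a $\lambda$ that is both $\geq\bar\lambda$ and $>\val(\bm\mu_0)$ yields $\val(\bm x^{\mathrm{ac}},\bm w^{\mathrm{ac}})=(x^\infty,w^\infty)$, which is the tropical barycenter of $\tropP$ --- the claim. The delicate point is Step 2: the classical central path approaches the analytic center without ever attaining it, so one must check that this residual gap is invisible to the valuation; this is exactly the purpose of quantifying the convergence by the power $t^{-N}$ with $N$ large relative to the (finitely many, finite) valuations of the coordinates of the analytic center. Step 1 --- compactness of $\tropP$ and eventual redundancy of the sublevel inequality --- is routine.
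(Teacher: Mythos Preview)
Your proof is correct. The paper's own argument is a one-liner (``by taking a sufficiently large $\lambda$ in the characterization of Theorem~\ref{th:geometric_characterization}''), and you have carefully spelled out one valid way to read it: Step~1 uses Theorem~\ref{th:geometric_characterization} as a black box to identify $\val(\bm x^{\bm\mu},\bm w^{\bm\mu})$ with the barycenter of $\tropP$ for large $\lambda$, and Step~2 bridges the residual gap between the Hardy central path point and the Hardy analytic center via a Tarski transfer of the classical convergence, with the key observation that a perturbation of valuation $\leq -N$ cannot change the valuation of a coordinate whose valuation exceeds $-N$.

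There is, however, a shorter route that the paper probably has in mind and that bypasses Step~2 entirely. The sentence just before the corollary establishes that the analytic center is the \emph{unique maximizer} of $g(\bm x,\bm w)=\sum_j\log\bm x_j+\sum_i\log\bm w_i$ over $\puiseuxP$. One can then rerun the asymptotic computation from the proof of Theorem~\ref{th:geometric_characterization} directly on $g$: since $[g(\bm x,\bm w)](t)=\bigl(\sum_j\val\bm x_j+\sum_i\val\bm w_i\bigr)\log t+O(1)$ for every strictly feasible point, the valuation of the maximizer of $g$ must maximize $\phi(x,w)=\sum_j x_j+\sum_i w_i$ over $\tropP$, and the unique such maximizer is the tropical barycenter. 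This is exactly the ``all points are of the first kind'' case of that proof, which is what ``taking $\lambda$ sufficiently large'' alludes to. Your approach trades this repetition of the asymptotic argument for an extra transfer step; both are legitimate, but the direct route is lighter.
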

We point out that, even if the analytic center depends on the inequality representation of the set $\puiseuxP$, its
tropical analogue is, surprisingly, completely determined by the set $\puiseuxP$.

\begin{ex}\label{ex:ex1}
Consider the Hardy polyhedron $\puiseuxR \subset \K^2$ defined by:
\begin{equation}\label{eq:ex}
\begin{aligned}
  \bm x_1 + \bm x_2 & \leq 2 \\
  t \bm x_1 & \leq 1 + t^2 \bm x_2 \\
  t \bm x_2  & \leq 1 + t^3 \bm x_1 \\
  \bm x_1 & \leq t^2 \bm x_2  \\
\bm x_1, \bm x_2 & \geq 0 \, .
\end{aligned}
\end{equation}
The tropical polyhedron $\tropR = \val(\puiseuxR)$ is described by the inequalities:
\begin{equation}\label{eq:ex_trop}
\begin{aligned}
  \max (x_1 , x_2) & \leq 0 \\
  1 + x_1 & \leq \max(0, 2+ x_2)\\
  1 + x_2 & \leq \max(0, 3 + x_1 ) \\
  x_1  & \leq 2 + x_2 \, .
\end{aligned}
\end{equation}
Notice that our inequality descriptions omit the slack variables.  The primal tropical central path associated with
$\tropR$ and the two objective functions $\bm x \mapsto \bm x_2$ and $\bm x \mapsto t \bm x_1 + \bm x_2$ is depicted in
Figure~\ref{fig:path1}.  Both objective functions are non-negative with optimal value zero.  So the primal components of
the respective tropical central paths are described by Corollary~\ref{cor:geometric_characterization}; the tropical
barycenter of $\tropR$ is the origin.

The tropical constraints \eqref{eq:ex_trop} give rise to an arrangement of tropical halfspaces which induces a cell
decomposition of $\R^2$; see \cite[\S3.6]{JoswigLoho:1503.04707}. Figure~\ref{fig:path2} depicts the tropical central
paths in each cell of this arrangement for the objective function $\bm x \mapsto t \bm x_1 + \bm x_2$.  Observe that the
central paths trace the arrangement of tropical hyperplanes associated with the tropical halfspaces
in~\eqref{eq:ex_trop}, as well as the line $\{ (-1+ \gamma, \gamma) \mid \gamma \in \R \}$ associated with the
objective function.
\end{ex}

\newcommand{\axisandedges}{
   \draw[axis, ->] (-4, -4) -- (1.3, -4);
   \draw[axis, ->] (-4, -4) -- (-4, 1.3);
   
   \foreach \x in {-4, -3, ..., 0} { 
     \node [anchor=north] at (\x,-4.1) {${\scriptstyle\x}$}; 
     \draw[axis] (\x, -4.1) -- (\x, -3.9);
   }
   \foreach \y in {-4, -3, ..., 0} { 
     \node [anchor=east] at  (-4.1,\y) {${\scriptstyle\y}$};
     \draw[axis] (-4.1, \y) -- (-3.9, \y);
   }

   \node[below] at (1.3, -4) {$x_1$};
   \node[left] at (-4, 1.3) {$x_2$};
    \draw[edge] (-4,0) -- (0,0) -- (0,-4);
    \draw[edge] (-4, -1) -- (-3, -1) -- (-1,1);
    \draw[edge] (-2,-4) -- (1,-1);
    \draw[edge] (-1,-4) -- (-1,-2) -- (1,0);
}

\begin{figure}\label{fig:ex1}
  \begin{tikzpicture}
[
    axis/.style={lightgray},
    edge/.style={black},
    poly/.style={fill=lightgray, fill opacity = 0.7},
    path/.style={blue, very thick}
]

  \begin{scope}[shift = {(-5,0)}]
   \fill[poly] (0,0) -- (0,-1) -- (-1,-2) -- (-1,-3) -- (-2,-4) -- (-4,-4)
                -- (-4,-1) -- (-3, -1) -- (-2, 0) -- cycle;
\axisandedges

\draw[path] (0,0) -- (0,-1) -- (-1,-2) -- (-1,-3) -- (-2,-4);

    \end{scope}

  \begin{scope}[shift = {(2,0)}]
   \fill[poly] (0,0) -- (0,-1) -- (-1,-2) -- (-1,-3) -- (-2,-4) -- (-4,-4)
                -- (-4,-1) -- (-3, -1) -- (-2, 0) -- cycle;
\axisandedges
\draw[path] (0,0) -- (-1,0) -- (-4,-3);
    \end{scope}

\end{tikzpicture}
  \centering
  \caption{Tropical central paths associated with the Hardy polyhedron given in~\eqref{eq:ex} and the objective function $\bm x \mapsto \bm x_2$ (left) and  $\bm x \mapsto t \bm x_1 + \bm x_2$ (right).}\label{fig:path1}
\end{figure}
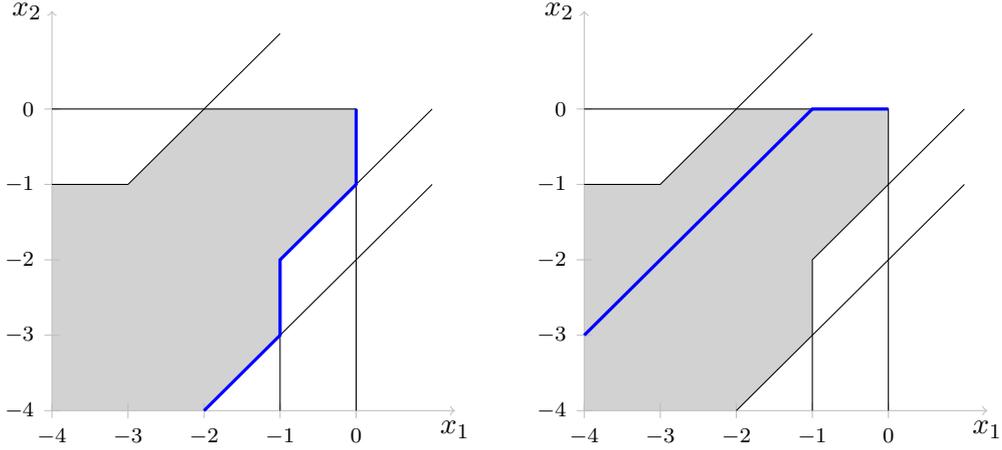

\begin{figure}\label{fig:ex2}
  \begin{tikzpicture}
[
    axis/.style={lightgray},
    edge/.style={black},
    poly/.style={fill=lightgray, fill opacity = 0.7},
    path/.style={blue, very thick, rounded corners},
]

\newcommand{\e}{0.05}

  \begin{scope}[shift = {(-5,0)}]
    
\axisandedges

\draw[path] (-\e,-\e) -- (-1,-\e) -- (-4,-3-\e);

\draw[path] (-2+2*\e, \e) -- (-1, \e) -- (0, 1+\e);

\draw[path] (-4, \e) -- (-2-\e, \e) -- (-1 - \e, 1+\e);

\draw[path] (-4, -1+\e) -- (-3 -\e, -1+ \e) -- ( -3 - \e +1 -2*\e  , - \e );

\draw[path] (-1-\e, -3 - 3*\e) -- (-2+2*\e, -4);

\draw[path] (-1+\e, -4) -- (-1+\e,-3 - \e) -- (-\e, -2 - 3*\e);

\draw[path] (-1+\e, -3+ 2*\e) -- (-1+\e,-2 - \e) -- (-\e, -1 - 3*\e);

\draw[path]  (1, -2*\e) -- (\e, -1-\e) -- (\e, -2+2*\e);

\draw[path]  (1, -1-2*\e) -- (\e, -2-\e) -- (\e, -4);

    \end{scope}

\end{tikzpicture}

  \centering
  \caption{Tropical central paths in the full-dimensional cells of the arrangement of tropical halfspaces associated
    arising from the Hardy linear inequalities \eqref{eq:ex} and the objective function $\bm x \mapsto t \bm x_1 + \bm
    x_2$. For better visibility the parts of the paths that lie on the boundaries are slightly shifted inside their
    respective cell.}
  \label{fig:path2}
\end{figure}
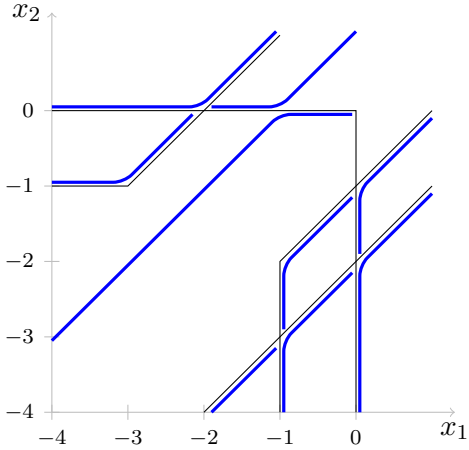

\subsection{Uniform convergence and metric estimates}

In this section, we show that the convergence of the functions $(\log_t \cpath_t)_t$ to the map $\troppath$ is uniform, and we establish an upper bound on the sup metric 
\[ d_\infty(\troppath, \log_t \cpath_t) := \sup_{\lambda \in \R} \,\bigl\lvert \troppath(\lambda) - \log_t \cpath_t(\lambda) \bigr\rvert\]
between $\troppath$ and $\log_t \cpath_t$. It is worth noting that the proof of the uniform convergence given below is independent of the previous results in Section~\ref{sec:central_path}, such as the existence of the Hardy central path, the point-wise convergence of $(\log_t \cpath_t)_t$, and the characterization of the limit given in Theorem~\ref{th:geometric_characterization}. Moreover, the proof only relies on the fact that $\K$ is an ordered field.
Beyond this property, we do not exploit further results 
in model theory. This is sufficient to ensure that the basic results involved in linear programming (Minkowski--Weyl theorem, Strong Duality, etc) are valid. 

We introduce the following non-symmetric metric, 
\[
\funk(x,y) := \inf\bigl\{ \rho \geq 0 \mid \rho \tdot x \geq y \bigr\} \, ,
\]
where $x, y \in \trop^{m+n}$. This is a \emph{hemi-metric}
in the sense of~\cite{DezaDeza}.
The function $\funk$
 is a tropical analogue of the \emph{Funk metric} which appears in Hilbert's
geometry~\cite{PT14}.  Note that $\funk(x,y) < + \infty$ if and only if 
$x_k = -\infty$
implies $y_k = -\infty$,
for all $k \in [m+n]$.
More precisely, $\funk(x,y) = \max(0,\max_k (y_k - x_k))$, with the convention $-\infty +
\infty = +\infty$.  We shall also use the following symmetrization of $\funk$:
\[
\hilbert(x,y) := \funk(x,y) + \funk(y,x) \, . 
\]
This is an affine version of \emph{Hilbert's projective metric}, which was shown to be in some sense the canonical
metric in tropical convexity~\cite{cgq02}. The relevance of Hilbert's geometry to the study of the central path
was already observed by Bayer and Lagarias~\cite{BayerLagarias89a}. 
Observe that $\hilbert(x,y) < + \infty$ if and only if the
two sets $\{ k \in [m+n] \mid x_k = -\infty \}$ and $\{ k \in [m+n] \mid y_k = -\infty \}$ are identical. By abuse of
notation, given two sets $X, Y \subset \trop^{m+n}$, we denote by $\hilbert(X,Y)$ the directed Hausdorff distance from $X$ to
$Y$ induced by $\hilbert$, \ie~$\hilbert(X,Y) := \sup_{x \in X} \inf_{y \in Y} \hilbert(x,y)$.

Let us define the polyhedra $\puiseuxP(t) := \{ (x,w) \in \R^{n+m} \mid \bm A(t) x + w = \bm b(t), \ x, w \geq 0\}$ and
$\puiseuxQ(t) := \{(y, s) \in \R^{m+n} \mid - \transpose{\bm A(t)} y + s = \bm c(t), \ y, s \geq 0 \}$ over $\R$. We
introduce the quantity
\begin{multline*}
\delta(t) := \log_t (m+n) + \max\Bigl(\hilbert(\log_t \puiseuxP(t), \tropP) + \funk\bigl(\log_t (\bm y^*(t), \bm s^*(t)), (y^*, s^*)\bigr) ,  \\
\hilbert(\log_t \puiseuxQ(t), \tropQ) + \funk\bigl(\log_t (\bm x^*(t), \bm w^*(t)), (x^*, w^*)\bigr)\Bigr) \, ,
\end{multline*}
which bounds the distance between the tropical polyhedron $\tropP$ and the image of the classical polyhedron
$\puiseuxP(t)$ under the map $\log_t$ and simultaneously the distance between $\tropQ$ and $\log_t\puiseuxQ(t)$.  As we
will show in Lemma~\ref{lemma:delta} below, the expression $\delta(t)$ tends to $0$ when $t$ goes to $+\infty$.

\begin{theorem}\label{th:uniform}
  The family of functions $(\log_t \cpath_t)_t$ converges uniformly to the map $\troppath$ for $t \to +\infty$. More
  precisely, for all $t$ sufficiently large, we have
  \begin{equation}\label{eq:uniform}
    d_\infty(\troppath, \log_t \cpath_t) \leq \delta(t) \, . 
  \end{equation}
\end{theorem}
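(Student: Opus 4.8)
The plan is to prove the uniform bound $d_\infty(\troppath, \log_t \cpath_t) \leq \delta(t)$ by exploiting the geometric characterization of the central path as a barycenter --- but obtained \emph{intrinsically} over each real field $\R$ (for fixed large $t$) and over $\trop$, and then comparing these two barycenters through the hemi-metric $\funk$. The key observation is that, by Theorem~\ref{th:geometric_characterization}, both $\troppath(\lambda)$ and (after taking $\log_t$) $\cpath_t(\lambda)$ admit a description as the barycenter of a polyhedron cut out by a feasible set intersected with a sublevel set of the optimality gap. Over $\R$ the ``tropical barycenter'' should be replaced by its classical analogue used in the proof of Theorem~\ref{th:geometric_characterization}: the point of the central path maximizes $\sum_j \log x_j + \sum_i \log w_i$ over the slice of the feasible polyhedron where the optimality gap is at most $\mu$ (up to a multiplicative constant). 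The first step is therefore to record, for each fixed $t$, a clean ``real barycenter'' characterization: $\cpath_t(\lambda)$ maximizes $\sum \log(\cdot)$ over $\puiseuxP(t)$ intersected with $\{\text{primal gap} \leq t^\lambda\}$, and likewise on the dual side, this being just the content of the logarithmic barrier formulation already invoked in Section~\ref{subsec:central_path_prelim}.

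The second and main step is a \emph{monotonicity/Lipschitz lemma for barycenters under the hemi-metric}. Concretely: if $X \subset \trop^{m+n}$ is a compact tropically convex set with barycenter $b(X)$, and $X'$ is another such set with $\funk(X, X') \leq \rho$ and $\funk(X', X) \leq \rho$ (directed Hausdorff distances for $\funk$), then $\funk(b(X), b(X')) \leq \rho$ and $\funk(b(X'), b(X)) \leq \rho$. This is essentially because the barycenter is the coordinatewise supremum, and the supremum operation is $1$-Lipschitz for $\funk$; the analogous statement over $\R$, where the ``barycenter'' is the $\sum\log$-maximizer, is slightly more delicate but follows from the strict concavity of $\sum\log$ together with the scaling behaviour $\funk(\rho \tdot x, y)$, i.e.\ from the fact that the real analytic-center map is contracting in Hilbert's/Funk's metric --- exactly the classical fact that Bayer and Lagarias exploited~\cite{BayerLagarias89a}. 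I would then apply this with $X = \tropP^\lambda$ and $X' = \log_t(\puiseuxP(t) \cap \{\text{primal gap} \leq t^\lambda\})$; the Hausdorff distance between these two slices in the $\funk$-metric is controlled, uniformly in $\lambda$, by $\hilbert(\log_t \puiseuxP(t), \tropP)$ plus the error incurred by replacing the exact optimal $(\bm y^*, \bm s^*)$ (which defines the primal gap functional $\transpose{(\bm s^*)} \bm x + \transpose{(\bm y^*)} \bm w$) by $\log_t(\bm y^*(t), \bm s^*(t))$, which accounts for the term $\funk(\log_t(\bm y^*(t), \bm s^*(t)), (y^*, s^*))$. The factor $\log_t(m+n)$ enters because passing between a tropical sum of $m+n$ terms and an ordinary sum of $m+n$ terms costs at most $\log_t(m+n)$ in valuation. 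Summing the primal and dual contributions and taking the max yields precisely $\delta(t)$; and since $d_\infty$ is defined with the symmetric $|\cdot|$, while $|x-y| = \max(\funk(x,y),\funk(y,x))$ entrywise up to the sign convention, one more symmetrization step converts the directed bounds into~\eqref{eq:uniform}.

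The third step is bookkeeping: one must check that the sublevel sets $\puiseuxP(t) \cap \{\text{gap} \leq t^\lambda\}$ are bounded for all large $t$ (so that the real barycenter exists), which follows from Remark~\ref{remark:equivalent_assumption} and the fact, already established in the proof of Theorem~\ref{th:geometric_characterization}, that $\tropP^\lambda$ is bounded; and one must make sure all estimates are genuinely uniform in $\lambda \in \R$, which they are because the only $\lambda$-dependent ingredient is the sublevel threshold, and the $\funk$-distance between $\{g \leq t^\lambda\}$ and $\{g' \leq t^\lambda\}$ for two close linear functionals $g, g'$ is bounded independently of the threshold. I expect the \textbf{main obstacle} to be the barycenter-Lipschitz lemma in its \emph{classical} incarnation --- i.e.\ proving that the real log-barrier maximizer over a slice depends Lipschitz-continuously (in Funk/Hilbert metric) on the slice, uniformly as the slice degenerates when $\lambda \to \pm\infty$. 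The clean way around this is to avoid differentiating the barrier and instead argue directly: if $(x,w)$ is the real barycenter of a slice $S$ and $S \subset \rho\tdot S'$ (entrywise, in the multiplicative sense), then rescaling shows the barycenter of $S'$ dominates $\rho^{-1}\tdot(x,w)$ coordinatewise in the relevant weak sense, because $\sum\log$ is monotone and translation-equivariant under coordinatewise scaling. Once this soft monotonicity replaces any quantitative convexity estimate, the whole argument reduces to combining Hausdorff bounds, and the theorem follows.
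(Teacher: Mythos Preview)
Your plan rests on a characterization of the \emph{real} central path that is not correct: the point $\cpath_t(\lambda)$ is \emph{not} the $\sum\log$-maximizer over $\puiseuxP(t)\cap\{\text{primal gap}\leq t^\lambda\}$. The barrier formulation gives $(x^\mu,w^\mu)$ as the maximizer of $\sum\log$ over the slice $\{c^Tx \leq c^Tx^\mu\}$, but the threshold $c^Tx^\mu-\nu$ is not $t^\lambda$; it is some problem-dependent quantity lying between $0$ and $(m+n)t^\lambda$. So the ``real barycenter'' object you want to compare to $\troppath(\lambda)$ is defined over a slice whose threshold you do not control uniformly in $\lambda$ and $t$. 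Your second step has a related gap: the soft monotonicity argument (``if $S\subset\rho\tdot S'$ then the $\sum\log$-maximizer of $S'$ dominates $\rho^{-1}$ times that of $S$'') works for the tropical barycenter because it is a coordinatewise supremum, but for the classical analytic center it only yields a bound on the \emph{sum} $\sum\log$, not a coordinatewise bound, and hence no Funk-metric Lipschitz estimate.

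The paper avoids both difficulties by a different mechanism. It never characterizes the real central path point as a maximizer over a slice. Instead it reduces to the \emph{one-sided} inequality $\log_t\cpath_t(\lambda)_k\leq\troppath(\lambda)_k+\delta(t)$: the reverse inequality then follows automatically from the exact primal--dual relation $x_js_j=w_iy_i=t^\lambda$ on the real central path combined with the tropical inequality $x^\lambda_j\tdot s^\lambda_j\leq\lambda$ (Lemma~\ref{lemma:troppath}\eqref{item:troppath1}). For the one-sided bound, the paper reads directly off the central path equations that $\transpose{(\bm s^*(t))}x+\transpose{(\bm y^*(t))}w\leq (m+n)t^\lambda$ (this is where the $\log_t(m+n)$ appears), translates this via the $\funk$-distance of $(\bm y^*(t),\bm s^*(t))$ to $(y^*,s^*)$, then finds a nearby point $(\bar x,\bar w)\in\tropP$ using the Hausdorff bound $\hilbert(\log_t\puiseuxP(t),\tropP)$. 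That nearby point lies in $\tropP^{\lambda'}$ for a $\lambda'$ exceeding $\lambda$ by the accumulated errors, and the Lipschitz property of $\troppath$ \emph{in the parameter $\lambda$} (Lemma~\ref{lemma:troppath}\eqref{item:troppath2}), not in the set, closes the estimate. The role you assigned to a barycenter-Lipschitz lemma is played instead by this monotone $1$-Lipschitz dependence of the tropical barycenter on the sublevel threshold, which is elementary.
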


We establish a few properties on the map $\troppath$ which will be useful in the proof of Theorem~\ref{th:uniform}. Since we do not rely on the results of the previous section, we give an independent proof that the points $(x^\lambda, w^\lambda)$ and $(y^\lambda, s^\lambda)$ are well-defined, and that they satisfy a duality property. The latter is weaker than the one given in~\eqref{eq:primal_dual}, for now. We also show a regularity property of~$\troppath$. 
\begin{lemma}\label{lemma:troppath}
Let $\lambda \in \R$. The following properties hold:
\begin{enumerate}[(i)]
\item\label{item:troppath0} the tropical polyhedra $\tropP^\lambda$ and $\tropQ^\lambda$ are compact;
\item\label{item:troppath1} $x^\lambda_j \tdot s^\lambda_j \leq \lambda$ and $w^\lambda_i \tdot y^\lambda_i \leq \lambda$ for all $i \in [m]$, $j \in [n]$;
\item\label{item:troppath2} for all $\lambda' \geq \lambda$, we have $\troppath(\lambda)_k \leq \troppath(\lambda')_k \leq \troppath(\lambda)_k + (\lambda' - \lambda)$ for every $k \in [2(m+n)]$. 
\end{enumerate}
\end{lemma}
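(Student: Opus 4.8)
The plan is to prove the three assertions in the order stated, using only that $\K$ is an ordered field together with the tropical Minkowski--Weyl theorem and strong duality. Two facts will be used repeatedly. First, the weak-duality identity $\transpose{\bm c}\bm x + \transpose{\bm b}\bm y = \sum_{j\in[n]}\bm x_j\bm s_j + \sum_{i\in[m]}\bm w_i\bm y_i$, valid for all $(\bm x,\bm w)\in\puiseuxP$ and $(\bm y,\bm s)\in\puiseuxQ$; specialized to the optimal pair its left-hand side vanishes, giving complementary slackness $\bm x^*_j\bm s^*_j = \bm w^*_i\bm y^*_i = 0$, hence $x^*_j + s^*_j = w^*_i + y^*_i = -\infty$ for all $i,j$. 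Second, as in~\eqref{eq:tropical_gap}, if $(x,w)=\val(\bm x,\bm w)$ then $(\transpose{(s^*)}\tdot x)\tplus(\transpose{(y^*)}\tdot w) = \val(\transpose{(\bm s^*)}\bm x + \transpose{(\bm y^*)}\bm w) = \val(\transpose{\bm c}\bm x - \bm\nu)$; in particular $(x^*,w^*)\in\tropP^\lambda$ and $(y^*,s^*)\in\tropQ^\lambda$ for every $\lambda$, so these sets are non-empty.

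For (i), the set $\tropP^\lambda$ is closed, being the intersection of the tropical polyhedron $\tropP=\val(\puiseuxP)$ (Proposition~\ref{prop-direct}) with the tropical halfspaces $\{(x,w)\mid x_j\le\lambda-s^*_j\}$ and $\{(x,w)\mid w_i\le\lambda-y^*_i\}$ (read as the whole space when the bound is $-\infty$); it remains to bound $\tropP^\lambda$ from above, and here I would use strict dual feasibility. Fix $(\bm y^\circ,\bm s^\circ)\in\puiseuxQ$ with all entries positive; the weak-duality identity gives, for every $(\bm x,\bm w)\in\puiseuxP$, $\transpose{(\bm s^\circ)}\bm x + \transpose{(\bm y^\circ)}\bm w = \transpose{\bm c}\bm x + \transpose{\bm b}\bm y^\circ = \bigl(\transpose{(\bm s^*)}\bm x + \transpose{(\bm y^*)}\bm w\bigr) + \bm\nu + \transpose{\bm b}\bm y^\circ$. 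If $(x,w)\in\tropP^\lambda$ and $(\bm x,\bm w)\in\puiseuxP$ is a lift with $\val(\bm x,\bm w)=(x,w)$, then $\val(\transpose{(\bm s^*)}\bm x+\transpose{(\bm y^*)}\bm w)\le\lambda$, so this non-negative element is $\le t^{\lambda+1}$ in $\K$, whence $\transpose{(\bm s^\circ)}\bm x+\transpose{(\bm y^\circ)}\bm w\le\bm\rho$ for an element $\bm\rho\in\K$ whose valuation $r_0:=\val\bm\rho$ is bounded by a constant depending only on $\lambda,\bm\nu,\bm b,\bm y^\circ$. Since $\bm s^\circ_j,\bm y^\circ_i>0$, this forces $\bm x_j\le\bm\rho/\bm s^\circ_j$ and $\bm w_i\le\bm\rho/\bm y^\circ_i$, hence $x_j\le r_0-\val\bm s^\circ_j$ and $w_i\le r_0-\val\bm y^\circ_i$; these bounds are uniform over $\tropP^\lambda$, so $\tropP^\lambda$ is compact, and $\tropQ^\lambda$ is handled symmetrically. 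Being non-empty, compact and tropically convex, $\tropP^\lambda$ and $\tropQ^\lambda$ admit barycenters, which moreover lie in the respective sets --- this is what makes $(x^\lambda,w^\lambda)$ and $(y^\lambda,s^\lambda)$ well-defined and will be used below.

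For (ii), I would lift the barycenters: pick $(\bm x,\bm w)\in\puiseuxP$ with $\val(\bm x,\bm w)=(x^\lambda,w^\lambda)$ and $(\bm y,\bm s)\in\puiseuxQ$ with $\val(\bm y,\bm s)=(y^\lambda,s^\lambda)$. From $(x^\lambda,w^\lambda)\in\tropP^\lambda$ we get, for any $\varepsilon>0$, $\transpose{\bm c}\bm x-\bm\nu = \transpose{(\bm s^*)}\bm x+\transpose{(\bm y^*)}\bm w\le t^{\lambda+\varepsilon}$, and from $(y^\lambda,s^\lambda)\in\tropQ^\lambda$, $\bm\nu+\transpose{\bm b}\bm y = \transpose{(\bm x^*)}\bm s+\transpose{(\bm w^*)}\bm y\le t^{\lambda+\varepsilon}$. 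Adding and using the weak-duality identity, $\sum_j\bm x_j\bm s_j+\sum_i\bm w_i\bm y_i = (\transpose{\bm c}\bm x-\bm\nu)+(\bm\nu+\transpose{\bm b}\bm y)\le 2t^{\lambda+\varepsilon}$; since every summand is non-negative, $\bm x_j\bm s_j\le 2t^{\lambda+\varepsilon}$ and $\bm w_i\bm y_i\le 2t^{\lambda+\varepsilon}$, so $x^\lambda_j+s^\lambda_j\le\lambda+\varepsilon$ and $w^\lambda_i+y^\lambda_i\le\lambda+\varepsilon$. Letting $\varepsilon\downarrow 0$ yields (ii). For (iii), monotonicity is immediate since $\lambda'\ge\lambda$ gives $\tropP^\lambda\subseteq\tropP^{\lambda'}$ and $\tropQ^\lambda\subseteq\tropQ^{\lambda'}$, so the coordinatewise suprema, i.e.\ the barycenters, only increase. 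For the upper bound put $\Delta:=\lambda'-\lambda\ge 0$; by tropical convexity of $\tropP$ applied to $(x^{\lambda'},w^{\lambda'})\in\tropP$ and $(x^*,w^*)\in\tropP$ with coefficients $-\Delta$ and $0$ (whose tropical sum is $0$), the point $(x,w):=\bigl((-\Delta)\tdot(x^{\lambda'},w^{\lambda'})\bigr)\tplus(x^*,w^*)$, that is $x_k=\max(x^{\lambda'}_k-\Delta,x^*_k)$ and likewise for $w$, lies in $\tropP$. Using $s^*_j+x^*_j=-\infty$ and $y^*_i+w^*_i=-\infty$ one gets $s^*_j+x_j=s^*_j+x^{\lambda'}_j-\Delta$ and $y^*_i+w_i=y^*_i+w^{\lambda'}_i-\Delta$, hence $(\transpose{(s^*)}\tdot x)\tplus(\transpose{(y^*)}\tdot w)=\bigl[(\transpose{(s^*)}\tdot x^{\lambda'})\tplus(\transpose{(y^*)}\tdot w^{\lambda'})\bigr]-\Delta\le\lambda'-\Delta=\lambda$, so $(x,w)\in\tropP^\lambda$; since $(x,w)\ge(x^{\lambda'}-\Delta,w^{\lambda'}-\Delta)$ coordinatewise and $(x^\lambda,w^\lambda)$ dominates $\tropP^\lambda$, we obtain $x^{\lambda'}_k\le x^\lambda_k+\Delta$ and $w^{\lambda'}_k\le w^\lambda_k+\Delta$. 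The dual coordinates are handled symmetrically, using $(y^*,s^*)\in\tropQ$ and $w^*_i+y^*_i=x^*_j+s^*_j=-\infty$.

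The main obstacle is that the obvious route to (ii) and to the upper bound in (iii) would go through the central-path relation $\bm x_j\bm s_j=\bm w_i\bm y_i=t^\lambda$, which is off limits here since we may not invoke the existence of the Hardy central path or Theorem~\ref{th:geometric_characterization}. The substitutes are, for (ii), a valuation lift combined with the weak-duality identity, and for (iii), the ``tropical translation by $-\Delta$ toward the optimum $(x^*,w^*)$'' together with complementary slackness; a secondary technical point to keep track of is the passage from an inequality between valuations to an inequality in $\K$, which forces the harmless $\varepsilon$-loss that is then sent to $0$.
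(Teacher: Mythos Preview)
Your proof is correct, and the overall strategy---duality over $\K$, tropical convexity, and lifts under the valuation---is the same as the paper's. The execution differs in a few places worth noting.

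For~(i), the paper first establishes the single tropical inequality
\[
(\transpose{s}\tdot x)\tplus(\transpose{y}\tdot w)\ \leq\ \lambda
\qquad\text{for all }(x,w)\in\tropP^\lambda,\ (y,s)\in\tropQ^\lambda,
\]
by applying the valuation to the identity $\transpose{\bm s}\bm x+\transpose{\bm y}\bm w=(\transpose{(\bm s^*)}\bm x+\transpose{(\bm y^*)}\bm w)+(\transpose{(\bm w^*)}\bm y+\transpose{(\bm x^*)}\bm s)$. Boundedness of $\tropP^\lambda$ then follows by exhibiting a point of $\tropQ^\lambda$ with all finite entries, obtained as a tropical combination $(y^*,s^*)\tplus(\alpha\tdot(y^\circ,s^\circ))$ for small $\alpha$. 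Your route instead stays in $\K$ and bounds each coordinate of a lift via the strictly feasible dual point $(\bm y^\circ,\bm s^\circ)$ directly. Both work; the paper's version has the advantage that~(ii) is then an immediate specialization of the displayed inequality to the two barycenters, whereas you rederive it separately with the $\varepsilon$-lift argument (which is fine, just longer).

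For the upper bound in~(iii), the paper forms the tropical combination $(x^\lambda,w^\lambda)\tplus\bigl((\lambda-\lambda')\tdot(x^{\lambda'},w^{\lambda'})\bigr)$, anchoring at the barycenter $(x^\lambda,w^\lambda)$ itself; membership in $\tropP^\lambda$ follows because both summands already satisfy the sublevel constraint at level $\lambda$. You instead anchor at the optimum $(x^*,w^*)$ and use complementary slackness $x^*_j+s^*_j=w^*_i+y^*_i=-\infty$ to verify the sublevel constraint. Both choices give the same conclusion; your variant makes the role of complementary slackness explicit, while the paper's avoids it entirely.
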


\begin{proof}
\begin{asparaenum}[(i)]
\item Given $(\bm x, \bm w) \in \puiseuxP$ and $(\bm y, \bm s) \in \puiseuxQ$, we have
\begin{align*}
  \transpose{\bm s} \bm x + \transpose{\bm y} \bm w & = \transpose{\bm c} \bm x + \transpose{\bm b} \bm y = (\transpose{\bm c} \bm x - \bm \nu) + (\transpose{\bm b} \bm y + \bm \nu) \\
  & = \bigl(\transpose{(\bm s^*)} \bm x + \transpose{(\bm y^*)} \bm w\bigr) + \bigl(\transpose{(\bm w^*)} \bm y +
  \transpose{(\bm x^*)} \bm s\bigr) \, .
\end{align*}
Since all the terms in the previous identity are non-negative, applying the valuation map shows that for all $(x,w) \in \tropP^\lambda$ and $(y,s) \in \tropQ^\lambda$,
\begin{equation}\label{eq:bounded}
(\transpose{s} \tdot x) \tplus (\transpose{y} \tdot w) =
\bigl(\transpose{(s^*)} \tdot x \bigr) \tplus \bigl(\transpose{(y^*)} \tdot w \bigr) \tplus
\bigl(\transpose{(w^*)} \tdot y \bigr) \tplus \bigl(\transpose{(x^*)} \tdot s \bigr) \leq \lambda \, .
\end{equation}
Equivalently, $x_j \tdot s_j \leq \lambda$ and $w_i \tdot y_i \leq \lambda$ for all $i \in [m]$ and $j \in [n]$. Provided that we can find a point $(y,s) \in \tropQ^\lambda$ such that $y_i, s_j > -\infty$ for all $i,j$, we deduce that the tropical polyhedron $\tropP^\lambda$ is bounded. We now exhibit such a point. Since $\transpose{(\bm w^*)} \bm y^* + \transpose{(\bm x^*)} \bm s^* = \bm \nu - \bm \nu = 0$, we know that $\bigl(\transpose{(w^*)} \tdot y^* \bigr) \tplus \bigl(\transpose{(x^*)} \tdot s^* \bigr) = -\infty$. Therefore, $(y^*, s^*) \in \tropQ^\lambda$. Besides, if we define $(y^\circ, s^\circ) := \val(\bm y^\circ, \bm s^\circ)$, the point $(y^*, s^*) \tplus \bigl(\alpha \tdot (y^\circ, s^\circ)\bigr)$ belongs to $\tropQ$ for all $\alpha \in \R_{\leq 0}$ because $\tropQ$ is tropically convex. The latter point belongs to $\tropQ^\lambda$ as soon as $\alpha$ is small enough, and none of its entries is equal to $-\infty$. 

We can prove that $\tropQ^\lambda$ is bounded by a symmetric argument.

\item The property comes from~\eqref{eq:bounded} applied to the points $(x^\lambda, w^\lambda)$ and $(y^\lambda, s^\lambda)$.

\item Since $\lambda\leq\lambda'$ we have $\tropP^\lambda \subset \tropP^{\lambda'}$, and this implies $\troppath(\lambda) \leq \troppath(\lambda')$ componentwise.  Now consider $(x,w) := (x^\lambda, w^\lambda) \tplus \bigl((\lambda - \lambda') \tdot (x^{\lambda'},w^{\lambda'})\bigr)$. That point lies in $\tropP$ since $(x^\lambda, w^\lambda), (x^{\lambda'},w^{\lambda'}) \in \tropP$ and $\tropP$ is tropically convex. Further, $\bigl(\transpose{(s^*)} \tdot x\bigr) \tplus \bigl(\transpose{(y^*)} \tdot w\bigr) \leq \lambda$. 
We deduce that $(x,w) \leq (x^\lambda,w^\lambda)$, which ensures that $x^{\lambda'}_j \leq x^{\lambda}_j + (\lambda' - \lambda)$ and $w^{\lambda'}_i \leq w^{\lambda}_i + (\lambda' - \lambda)$ for all $i \in [m]$ and $j \in [n]$. By using similar arguments for $(y^\lambda, s^\lambda)$ and $(y^{\lambda'}, s^{\lambda'})$, we obtain that $(\troppath(\lambda'))_k \leq (\troppath(\lambda))_k + (\lambda' - \lambda)$ holds for all $k \in [2(m+n)]$. \qedhere
\end{asparaenum}
\end{proof}

\begin{proof}[\proofname~(Theorem~\ref{th:uniform})]
  For $t$ sufficiently large, say $t \geq t_0$ without loss of generality, the points $(\bm x^*(t), \bm w^*(t))$ and
  $(\bm y^*(t), \bm s^*(t))$ are optimal solutions of the linear programs $\text{LP}(\bm A(t), \bm b(t), \bm c(t))$ and
  $\text{DualLP}(\bm A(t), \bm b(t), \bm c(t))$ respectively, and the optimal value of the two linear programs is equal
  to $\bm \nu(t)$. Further, there exists a real number $t_1$ such that for all $t\geq t_1$, we have $\log_t (\bm y^*(t),
  \bm s^*(t))_k = -\infty$ if and only if $(y^*, s^*)_k = -\infty$. Indeed, both are equivalent to $(\bm y^*, \bm s^*)_k
  = 0$.  It follows that $\funk\bigl(\log_t (\bm y^*(t), \bm s^*(t)), (y^*, s^*)\bigr) < +\infty$ for all $t \geq
  t_1$.

Now, let us fix $t \geq \max(t_0,t_1)$ and $\lambda \in \R$. We want to show that $\bigl|\troppath(\lambda)_k - \log_t \cpath_t(\lambda)_k \bigr| \leq \delta(t)$ for all $k \in [2(m+n)]$. We claim that it suffices to prove that
  \begin{equation}\label{eq:upper_bound}
    \log_t \cpath_t(\lambda)_k \leq \troppath(\lambda)_k + \delta(t) \, .
  \end{equation}
Indeed, let us set $(x,w,y,s) := \cpath_t(\lambda)$. For all $k \in [m+n]$, we have:
  \[
  \log_t (x, w)_k = \lambda - \log_t (s, y)_k \geq \lambda - (s^\lambda, y^\lambda)_k - \delta(t) \geq (x^\lambda, w^\lambda)_k - \delta(t) \, ,
  \]
  where the second inequality is given by Lemma~\ref{lemma:troppath}~\eqref{item:troppath1}. Similarly, we can prove that $\log_t (y,s)_k \geq (y^\lambda, s^\lambda)_k - \delta(t)$. This proves the claim.

  Finally, let us show that~\eqref{eq:upper_bound} holds. Recall that $(x,w,y,s) = \cpath_t(\lambda)$. As $\transpose{(\bm b(t))} \bm y^*(t) \leq \transpose{(\bm b(t))} y$, we have:
  \begin{align*}
    \transpose{(\bm s^*(t))} x + \transpose{(\bm y^*(t))} w  & = \transpose{(\bm c(t))} x + \transpose{(\bm b(t))} \bm y^*(t)\\
    & \leq \transpose{(\bm c(t))} x + \transpose{(\bm b(t))} y = \transpose{s} x + \transpose{y} w = (m+n) t^\lambda \, .
  \end{align*}
  Hence, $(\log_t \bm s^*_j(t)) \tdot (\log_t x_j) \leq \lambda + \log_t (m+n)$ and $(\log_t \bm y^*_i(t)) \tdot (\log_t
  w_i) \leq \lambda + \log_t (m+n)$ for all $i \in [m]$ and $j \in [n]$. Since $(y^*, s^*)_k \leq \log_t (\bm y^*(t),
  \bm s^*(t))_k + \funk\bigl(\log_t (\bm y^*(t), \bm s^*(t)), (y^*, s^*)\bigr)$ for each $k \in [m+n]$, we obtain:
  \begin{equation}
    \bigl(\transpose{(s^*)} \tdot \log_t x\bigr) \tplus \bigl(\transpose{(y^*)} \tdot \log_t w\bigr) \leq \lambda + \log_t (m+n) + \funk\bigl(\log_t (\bm y^*(t), \bm s^*(t)), (y^*, s^*)\bigr)\, . \label{eq1}
  \end{equation}
  Due to Assumption~\ref{ass:hardy_central_path_defined}, we know that there exists a point $(x^\circ, w^\circ) \in
  \tropP$ such that $x^\circ_j, w^\circ_i > -\infty$. As $\log_t(x,w)$ has no $-\infty$ entries, we deduce that
  $\hilbert(\log_t (x, w), \tropP) \leq \hilbert(\log_t (x,w), (x^\circ, w^\circ)) < +\infty$. Let $(\bar x, \bar w) \in \tropP$
  such that $\hilbert(\log_t (x, w), (\bar x, \bar w)) < +\infty$, so that $\bar x_j, \bar w_i > -\infty$. We derive
  from~\eqref{eq1} that $(\bar x, \bar w) \in \tropP^{\lambda'}$, where
  \[
  \lambda' := \lambda + \log_t(m+n) + \funk\bigl(\log_t (\bm y^*(t), \bm s^*(t)), (y^*, s^*)\bigr) + \funk\bigl(\log_t (x, w), (\bar x, \bar w)\bigr) \, .
  \]
  Hence, for all $k \in [m+n]$,
  \begin{align*}
    \log_t (x,w)_k & \leq (\bar x, \bar w)_k + \funk\bigl((\bar x, \bar w), \log_t (x, w)\bigr) \\
    & \leq (x^{\lambda'}, w^{\lambda'})_k + \funk\bigl((\bar x, \bar w), \log_t (x, w)\bigr)  \\
    & \leq (x^{\lambda}, w^{\lambda})_k + \log_t(m+n) + \funk\bigl(\log_t (\bm y^*(t), \bm s^*(t)), (y^*, s^*)\bigr) +\hilbert\bigl(\log_t (x, w), (\bar x, \bar w)\bigr) 
  \end{align*}
  where the last inequality comes from Lemma~\ref{lemma:troppath}~\eqref{item:troppath2}.  As this is valid for all $(\bar x, \bar w) \in \tropP$ at
  finite distance of $\log_t(x,w)$, we obtain that $\log_t (x,w)_k \leq (x^{\lambda}, w^{\lambda})_k + \delta(t)$.

  Using similar arguments, we can also show that $\log_t (y,s)_k \leq (y^\lambda, s^\lambda)_k + \delta(t)$, and this
  gives the claimed inequality~\eqref{eq:upper_bound}.   The uniform convergence of $\log_t \cpath_t$ to
  $\troppath$ is now a consequence of the following lemma.
\end{proof}

\begin{lemma}\label{lemma:delta}
  The limit of $\delta(t)$ equals $0$ when $t$ goes to $+\infty$.
\end{lemma}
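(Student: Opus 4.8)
The plan is to show, separately, that each of the five nonnegative quantities occurring in $\delta(t)$ — namely $\log_t(m+n)$, the two directed Hausdorff distances $\hilbert(\log_t\puiseuxP(t),\tropP)$ and $\hilbert(\log_t\puiseuxQ(t),\tropQ)$, and the two Funk terms — tends to $0$; since $\delta(t)$ is built from these by sums and a maximum, this gives $\delta(t)\to 0$. The term $\log_t(m+n)$ is obvious. For the Funk term $\funk(\log_t(\bm y^*(t),\bm s^*(t)),(y^*,s^*))$, recall from the proof of Theorem~\ref{th:uniform} that for $t$ large a coordinate of $\log_t(\bm y^*(t),\bm s^*(t))$ equals $-\infty$ exactly when the corresponding coordinate of $(y^*,s^*)=\val(\bm y^*,\bm s^*)$ does, while on the remaining finitely many coordinates $\log_t(\bm y^*(t),\bm s^*(t))_k\to\val(\bm y^*,\bm s^*)_k$ by the definition of the valuation. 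Since $\funk(x,y)=\max(0,\max_k(y_k-x_k))$ and only coordinates with $y_k>-\infty$ matter, this term tends to $0$; the term with $(\bm x^*(t),\bm w^*(t))$ is identical.

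The substantial point is $\hilbert(\log_t\puiseuxP(t),\tropP)\to 0$ (the case of $\tropQ$ being symmetric), i.e.\ every point of $\log_t\puiseuxP(t)$ is within vanishing Hilbert distance of $\tropP=\val(\puiseuxP)$. I would argue as in the converse direction of the proof of Proposition~\ref{prop-direct}. Since $\puiseuxP\subseteq\K_+^{n+m}$, Minkowski--Weyl over $\K$ gives generators $\bm v^1,\dots,\bm v^r,\bm w^1,\dots,\bm w^s$ in $\K_+^{n+m}$ with $\sum_i\bm\alpha_i=1$ in the representation~\eqref{e-minkowskiweyl}; fixing representative functions of these germs, the corresponding first-order statement (in the parameters $\bm A,\bm b,\bm v^i,\bm w^j$) transfers to $\R$, so for $t$ large every $p\in\puiseuxP(t)$ can be written $p=\sum_i\alpha_i\bm v^i(t)+\sum_j\beta_j\bm w^j(t)$ with $\alpha_i,\beta_j\geq 0$ and $\sum_i\alpha_i=1$. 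Setting $M:=\tsum_i\log_t\alpha_i$, which is finite and lies in $[-\log_t r,0]$ (indeed $\max_i\alpha_i\geq 1/r$ since $\sum_i\alpha_i=1$), I would then compare $\log_t p$ with the tropical point
\[
q:=\Bigl(\tsum_i(\log_t\alpha_i-M)\tdot v^i\Bigr)\tplus\Bigl(\tsum_j(\log_t\beta_j)\tdot w^j\Bigr)\in\tropP,\qquad v^i:=\val\bm v^i,\ \ w^j:=\val\bm w^j,
\]
which lies in $\tropP$ precisely because $\tsum_i(\log_t\alpha_i-M)=0$.

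Then it remains to bound $\hilbert(\log_t p,q)$ uniformly in $p$. Coordinatewise, $(\log_t p)_\ell=\log_t\bigl(\sum_i\alpha_i\bm v^i_\ell(t)+\sum_j\beta_j\bm w^j_\ell(t)\bigr)$ lies within $\log_t(r+s)$ of the maximum of the logarithms of its individual summands (the logarithm of a sum of nonnegative reals being within the logarithm of the number of summands of the maximum of their logarithms, with zero terms simply dropped); that maximum differs by at most $\varepsilon(t)$ from the same expression with $\log_t\bm v^i_\ell(t)$ and $\log_t\bm w^j_\ell(t)$ replaced by $v^i_\ell$ and $w^j_\ell$, where $\varepsilon(t)$ is the largest of the finitely many differences $|\log_t\bm v^i_\ell(t)-v^i_\ell|$ and $|\log_t\bm w^j_\ell(t)-w^j_\ell|$ over coordinates with finite valuation, so $\varepsilon(t)\to 0$; and finally the normalizing shift by $M$ alters this by at most $|M|\leq\log_t r$. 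One also checks that for $t$ large the $-\infty$ patterns of $\log_t p$ and $q$ coincide. Hence $|(\log_t p)_\ell-q_\ell|\leq\log_t(r+s)+\varepsilon(t)+\log_t r=:\eta(t)$, a bound \emph{independent of $p$ and $\ell$} with $\eta(t)\to 0$, whence $\hilbert(\log_t p,q)\leq 2\eta(t)$ and therefore $\hilbert(\log_t\puiseuxP(t),\tropP)\leq 2\eta(t)\to 0$. With the identical argument for $\puiseuxQ\subseteq\K_+^{m+n}$, and the first paragraph, this finishes the proof.

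The main obstacle is making the estimate uniform in $p$: one must control the scalars $\log_t\alpha_i$ (the normalization $\sum_i\alpha_i=1$ being what gives $M\geq-\log_t r$), observe that recession directions of $\puiseuxP(t)$ — where some $\beta_j$ may be arbitrarily large — cause no difficulty since there $\log_t\beta_j$ remains an ordinary real while tropical rays accept arbitrary real scalars, and keep careful track of the $-\infty$ entries so that the Hilbert distance stays finite.
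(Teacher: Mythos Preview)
Your proof is correct and follows essentially the same route as the paper. Both arguments handle the Funk terms and $\log_t(m+n)$ trivially, then use Minkowski--Weyl over $\K$ to obtain generators of $\puiseuxP$ whose evaluations at large $t$ generate $\puiseuxP(t)$; for an arbitrary $p\in\puiseuxP(t)$ written as a convex-plus-conic combination, both normalize the convex coefficients so that their $\log_t$ has tropical sum $0$ (your $M=\tsum_i\log_t\alpha_i$ is exactly $\log_t\gamma$ with $\gamma=\max_i\alpha_i$ in the paper), define the corresponding tropical combination $q\in\tropP$, and bound $\hilbert(\log_t p,q)$ by $\log_t$ of the number of generators plus the maximal coordinatewise discrepancy $|\log_t\bm v^i_\ell(t)-v^i_\ell|$ on finite-valuation entries. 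Your treatment of the $-\infty$ patterns and of the normalizing shift is the same in substance, just packaged slightly differently.
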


\begin{proof}
We already argued in the proof of Theorem~\ref{th:uniform} that $\funk\bigl(\log_t (\bm y^*(t), \bm s^*(t)), (y^*, s^*)\bigr) < +\infty$ when $t$ is large enough. Then, the convergence towards $0$ is straightforward.

In order to prove that $\hilbert(\log_t \puiseuxP(t), \tropP)$ tends to $0$, we use the fact that the Hardy polyhedron $\puiseuxP$ can be written as the Minkowski sum of the convex hull (over $\K$) of a finite set of points $\{\bm p^k\}_{k \in K}$ and a polyhedral cone generated by a finite set of rays $\{\bm r^l\}_{l \in L}$, as in the proof of Proposition~\ref{prop-direct}. As shown there, the set $\tropP$ is generated by the points $p^k := \val(\bm p^k)$, ($k \in K$) and rays $r^l := \val(\bm r^l)$ ($l \in L$). Besides, if $t$ is large enough, then the real polyhedron $\puiseuxP(t)$ is generated by the points $\bm p^k(t)$ and rays $\bm r^l(t)$. Provided that $t$ is large enough, $\bm p^k_h(t) = 0$ is equivalent to $p^k_h = -\infty$, for all $k \in K$ and $h \in [m+n]$. Thus, $\funk(\log_t \bm p^k(t), p^k)$ and $\funk(p^k, \log_t \bm p^k(t))$ converge to $0$ when $t \to +\infty$. Similar properties apply to the vectors~$\bm r^l(t)$ and $r^l$. 

Now consider $z := (x,w) \in \puiseuxP(t)$. Let $(\alpha_k)_{k \in K}$ and $(\beta_l)_{l \in L}$ such that $\alpha_k, \beta_l \in \R_{> 0}$ for all $k \in K$, $l \in L$, $\sum_{k \in K} \alpha_k = 1$, and $z = \sum_{k \in K} \alpha_k \bm p^k(t) + \sum_{l \in L} \beta_l \bm r^l(t)$. Then, for all $h \in [m+n]$, we can write
\begin{multline}
\tsum_{k \in K} \bigl((\log_t \alpha_k) \tdot \log_t \bm p^k(t)\bigr) \tplus \tsum_{l \in L} \bigl((\log_t \beta_l) \tdot \log_t \bm r^l(t)\bigr) \leq \log_t z_h \\
\leq \biggl[\tsum_{k \in K} \bigl((\log_t \alpha_k) \tdot \log_t \bm p^k(t)\bigr) \tplus \tsum_{l \in L} \bigl((\log_t \beta_l) \tdot \log_t \bm r^l(t)\bigr)\biggr] + \log_t (|K| + |L|) \, .
\label{eq:delta_proof_eq1}
\end{multline}
Setting $\gamma := \max_{k \in K} \alpha_k$, we have $\frac{1}{|K|} \leq \gamma \leq 1$. Then, we define $z' := \bigl(\tsum_{k \in K} \alpha'_k \tdot p^k\bigr) \tplus \bigl(\tsum_{l \in L} \beta'_l \tdot r^l\bigr)$, where $\alpha'_k := \log_t (\alpha_k/\gamma)$ and $\beta'_l := \log_t \beta_l$. As $\tsum_{k \in K} \alpha'_k = 0$ by definition of $\gamma$, we have $z' \in \tropP$. Besides, $z_h > 0$ if, and only if, there exists $k \in K$ such that $\bm p^k_h(t) > 0$ or $l \in L$ such that $\bm r^l_h(t) > 0$. Provided that $t$ is sufficiently large, this is equivalent to the fact that $p^k_h > -\infty$ for some $k \in L$, or $r^l_h > -\infty$ for a certain $l \in L$. This latter property amounts to $z'_h > -\infty$. Consequently, we have $\hilbert(\log_t z, z') < +\infty$, and we can derive from~\eqref{eq:delta_proof_eq1} that
\begin{multline*}
z'_h - \max\bigl(\log_t |K|+ \max_{k \in K} \funk(\log_t \bm p^k(t), p^k), \max_{l \in L} \funk(\log_t \bm r^l(t), r^l)\bigr) \leq \log_t z_h \\
 \leq z'_h + \log_t (|K| + |L|) + \max\bigl(\max_{k \in K} \funk(p^k, \log_t \bm p^k(t)), \max_{l \in L} \funk(r^l, \log_t \bm r^l(t))\bigr) \, ,
\end{multline*}
for all $h \in [m+n]$. We deduce that 
\begin{multline*}
\hilbert(\log_t \puiseuxP(t), \tropP) \leq \log_t (|K| + |L|) + \max\bigl(\max_{k \in K} \funk(p^k, \log_t \bm p^k(t)), \max_{l \in L} \funk(r^l, \log_t \bm r^l(t))\bigr) \\
+ \max\bigl(\log_t |K|+ \max_{k \in K} \funk(\log_t \bm p^k(t), p^k), \max_{l \in L} \funk(\log_t \bm r^l(t), r^l)\bigr) \, , 	
\end{multline*}
which tends to $0$ when $t \to +\infty$. 

A similar argument works for $\puiseuxQ(t)$ and $\tropQ$, and thus we can conclude that the limit of $\delta(t)$ is equal to~$0$.
\end{proof}

Finally, we note that the uniform convergence of the maps $(\log_t \cpath_t)_t$ to $\troppath$ allows us to recover the result of Corollary~\ref{cor:geometric_characterization} and the duality property~\eqref{eq:primal_dual}. 

\subsection{The tropical central path can degenerate to a simplex path}\label{sec:simplex}

In this section, we restrict our attention to the $x$-component of the primal tropical central path. We show that under
some assumptions, this projection of the tropical central path lies on the image under the valuation map of the graph of
the polyhedron $\puiseuxR := \{ \bm x \in \K^n \mid \bm A \bm x \leq \bm b \, , \, \bm x \geq 0 \}$, \ie~the projection
of the feasible set $\puiseuxP$ onto the $\bm x$-subspace.

We consider the following primal linear program over $\K$:
\begin{equation}\tag*{$\textbf{LP}$}\label{eq:lpl}
\begin{array}{r@{\quad}l}
\text{minimize} & \bm x_n \\
\text{subject to} & \bm x \in \puiseuxP \, ,
\end{array}
\end{equation}
and we make the following assumption:
\begin{assumption}\label{ass:simplex}
\begin{enumerate}[(i)]
\item\label{item:simplex0} The Hardy polyhedron $\puiseuxP$ is bounded, and it contains a point with positive entries;
\item\label{item:simplex1} the optimal value of~\ref{eq:lpl} is $0$;
\item\label{item:simplex3} the tropicalization of the extended matrix $(\bm A \ \bm b)$ is sign generic;
\item\label{item:simplex2} the matrix $\bm A$ contains at most one positive entry in every row.
\end{enumerate}
\end{assumption}
Condition~\eqref{item:simplex0} ensures Assumption~\ref{ass:hardy_central_path_defined} is satisfied (see
Remark~\ref{remark:equivalent_assumption}), so that the results of the previous sections apply. Thanks to
Condition~\eqref{item:simplex1} and the fact that the cost vector in the linear program~\ref{eq:lpl} is non-negative, we
know by Corollary~\ref{cor:geometric_characterization} that the primal part $(x^\lambda, w^\lambda)$ of the tropical
central path is given by the barycenter of the tropical polyhedron $\tropP \cap \{ (x, w) \in \trop^{m+n} \mid x_n \leq
\lambda \}$. As the tropical polyhedron $\tropR := \val(\puiseuxR)$ is the projection of $\tropP$ on the $x$-component,
we deduce that the point $x^\lambda$ corresponds to the barycenter of the tropical polyhedron $\tropR \cap \{ x \in
\trop^n \mid x_n \leq \lambda \}$.

In view of Assumption~\ref{ass:simplex}~\eqref{item:simplex3} we may apply Theorem~\ref{thm:val_inter_commutes} to
obtain a description of $\tropR$ in terms of tropical halfspaces. More precisely, for $A^+ = \val(\bm A^+)$, $A^- =
\val(\bm A^-)$, $b^+ = \val(\bm b^+)$ and $b^- = \val(\bm b^-)$ as usual, we have
\[
\tropR = \bigl\{ x \in \trop^n \mid A^+ \tdot x \tplus b^- \leq A^- \tdot x \tplus b^+ \bigr\} \, .
\]

While the first three conditions in the Assumption~\ref{ass:simplex} are standard requirements concerning the general
position, the final property~\eqref{item:simplex2} is very special.  It forces that each tropical halfspace in the
tropicalization is the complement of a single sector; see \cite{Tropical+halfspaces} for details on the combinatorics of
tropical halfspaces.

\begin{proposition}\label{prop:simplex}
  Under Assumption~\ref{ass:simplex}, the $x$-component of the tropical central path associated with the linear
  program~\ref{eq:lpl} is contained in the image under the valuation map of the vertex-edge graph of the Hardy
  polyhedron $\puiseuxR$.
\end{proposition}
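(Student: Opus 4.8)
The plan is to prove that for every $\lambda \in \R$ the point $x^\lambda$ belongs to the image under the valuation map of the $1$-skeleton (vertex--edge graph) of $\puiseuxR$; since $\lambda$ is arbitrary this gives the statement. As recalled just before the proposition, $x^\lambda$ is the tropical barycenter of $\tropR^\lambda := \tropR \cap \{x \in \trop^n \mid x_n \le \lambda\}$, that is, the coordinatewise supremum (greatest element) of this compact, tropically convex set. I would first record that $x^\lambda$ has only finite coordinates: by Theorem~\ref{th:geometric_characterization} it is the $x$-block of $\val(\bm \cpath(t^\lambda))$, and the Hardy central path point $\bm \cpath(t^\lambda)$ has strictly positive entries by Proposition~\ref{prop:hardy_central_path}, so none of its valuations is $-\infty$. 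Consequently no positivity constraint of $\puiseuxR$ is active tropically at $x^\lambda$, and only the rows of $\bm A$ will matter.

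The combinatorial core uses Assumption~\ref{ass:simplex}~\eqref{item:simplex2}. For a row $i$ of $\bm A$ having a (unique) positive entry, write $\sigma(i)$ for its column; then the $i$-th tropical inequality defining $\tropR$ has, on its smaller side, at most the single monomial $x_{\sigma(i)}$ together with the constant coming from $\bm b$. Since $x^\lambda$ is the greatest element of $\tropR^\lambda$ and has finite coordinates, for each coordinate $h \in [n]$ the point $x^\lambda + \epsilon e_h$ must leave $\tropR^\lambda$ for all $\epsilon > 0$. Increasing $x_h$ never decreases the larger side of any tropical inequality and increases the smaller side only of those rows $i$ with $\sigma(i) = h$; inspecting which constraint must therefore be violated, I would conclude that for each $h$, either $h = n$ and the cut $x_n \le \lambda$ is tight at $x^\lambda$, or there is a row $i$ with $\sigma(i) = h$ that is tight at $x^\lambda$ and whose smaller side is attained there by the monomial $x_h$ (not by the $\bm b$-constant). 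Such rows are pairwise distinct as $h$ varies (each is pinned down by $\sigma(i) = h$), so we obtain at least $n-1$ tight rows $\bm A_{\tau(h)}$, one for every coordinate with the possible exception of $h = n$, and by Assumption~\ref{ass:simplex}~\eqref{item:simplex2} the row $\bm A_{\tau(h)}$ has a strictly positive entry in column $h$ and nonpositive entries in all other columns.

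Let $I$ be the set of all rows of $\bm A$ that are tropically tight at $x^\lambda$, and put $F := \{\bm x \in \puiseuxR \mid \bm A_I \bm x = \bm b_I\}$, a face of $\puiseuxR$. By Theorem~\ref{thm:val_inter_commutes} (applicable by Assumption~\ref{ass:simplex}~\eqref{item:simplex3}, as already used to describe $\tropR$) the image $\val(F)$ is exactly the set of points of $\tropR$ at which all the rows in $I$ are tropically tight, so $x^\lambda \in \val(F)$ by construction. It remains to show $\dim F \le 1$, \ie\ $\rank \bm A_I \ge n-1$. Gathering the $\ge n-1$ rows $\bm A_{\tau(h)} \in \bm A_I$ obtained above --- and deleting column $n$ if no such row exists for $h = n$ --- yields a square submatrix of $\bm A$ whose diagonal term, the product of the strictly positive entries $\bm A_{\tau(h),h}$, is a nonvanishing term of its determinant expansion. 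By the sign-genericity of the tropicalization of $(\bm A\ \bm b)$ this submatrix is sign non-singular, in particular invertible, so $\rank \bm A_I \ge n-1$ and $\dim F \le 1$. As $\puiseuxR$ is bounded, $F$ is a nonempty face of dimension $0$ or $1$, \ie\ a vertex or an edge; hence $\val(F)$, and with it $x^\lambda$, lies in the valuation image of the vertex--edge graph of $\puiseuxR$.

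The step I expect to demand the most care is the passage, in the second paragraph, from ``$x_h$ cannot be increased'' to the existence of a tight row with positive column $h$ and active $x_h$-monomial: one must rule out that $x_h$ is only blocked by an inequality in which the monomial $x_h$ is dominated by the $\bm b$-constant on the smaller side (such a row does not locally constrain $x_h$ at all), and this is precisely where the greatest-element property is used in full strength. It is also worth isolating the regime $\lambda \ge \max_{x \in \tropR} x_n$: there the cut $x_n \le \lambda$ is inactive, coordinate $n$ must itself be blocked by a genuine row, and the same rank computation then gives $\dim F = 0$ --- consistently with Corollary~\ref{coro:analytic_center}, the tropical analytic center is the valuation of a vertex of $\puiseuxR$.
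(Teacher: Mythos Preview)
Your proof is correct and follows essentially the same approach as the paper: for each coordinate $h$ (the paper restricts to $h\in[n-1]$) you use the maximality of the tropical barycenter to find a tight row of $\bm A$ whose unique positive entry sits in column~$h$, and then sign genericity of $(\bm A\ \bm b)$ forces the resulting $(n{-}1)\times(n{-}1)$ submatrix to be nonsingular, so the active face has dimension at most one. The only cosmetic differences are that you record $x^\lambda\in\R^n$ explicitly, treat the case $h=n$ separately, and take $I$ to be \emph{all} tight rows rather than just the $n-1$ witnesses---none of which changes the argument.
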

\begin{proof}
Let $j \in [n-1]$. We point out that there must exist an index $i_j \in [m]$ such that $A^+_{i_j j} > -\infty$ and 
\[
A^+_{i_j j} \tdot x^\lambda_j = A^-_{i_j} \tdot x^\lambda \tplus b^+_i \, .
\]
If not, we could define the point $x \in \trop^n$ by $x_k = x^\lambda_k$ if $k \neq j$, and $x_j = x^\lambda_j + \epsilon$ where $\epsilon > 0$, and observe that $x$ still satisfies the inequalities $A^+ \tdot x \tplus b^- \leq A^- \tdot x \tplus b^+$ and $x_n \leq \lambda$, provided that $\epsilon$ is small enough. This would contradict the fact that $x^\lambda$ is the tropical barycenter of $\tropR \cap \{ x \in \trop^n \mid x_n \leq \lambda \}$.

Let $I$ be the set formed by the $i_j$, for $j \in [n-1]$. By Assumption~\ref{ass:simplex}~\eqref{item:simplex2}, we know that every row of the matrix $A^+$ contains at most a finite entry. We deduce that the indices $i_j$ are pairwise distinct, and so the set $I$ has cardinality $n-1$. Besides, as $A^+_I \tdot x^\lambda \tplus b^-_I = A^-_I \tdot x^\lambda \tplus b^+_I$, the second part of Theorem~\ref{thm:val_inter_commutes} ensures that there exists $\bm x \in \puiseuxR$ with $\val(\bm x) = x^\lambda$ satisfying $\bm A_I \bm x = \bm b_I$. 

We claim that the matrix $\bm A_I$ has rank $n-1$. To see this, consider the submatrix $\bm A'$ of $\bm A_I$ formed by
the columns of index $j \in [n-1]$. The expansion of its determinant contains the term $\pm \prod_{j \in [n-1]} \bm
A_{i_j j}$. The valuation of this term is given by $\bigodot_{j \neq l} A^+_{i_j j} > -\infty$, hence the term cannot be
null. Since the tropicalization of $\bm A'$ is sign generic by Assumption~\ref{ass:simplex}~\eqref{item:simplex3}, we
deduce that $\det \bm A' \neq 0$, which proves the claim.

As a consequence, the point $\bm x$ satisfies at least $n-1$ linearly independent defining inequalities of $\puiseuxR$
with equality.  Therefore, $\bm x$ belongs to the vertex-edge graph of $\puiseuxR$, and so $x^\lambda$ lies in its image
under the valuation map.
\end{proof}

Notice that the Assumption~\ref{ass:simplex} is sufficient but not necessary for the tropical central path to degenerate
to the boundary.  In Example~\ref{ex:ex1} the first inequality in (\ref{eq:ex}) has two positive coefficients, and still
the tropical central path lies in the boundary; see Figure~\ref{fig:path1} (left).

\begin{remark}
  The analytic center of $\puiseuxR$, defined as the projection on the $\bm x$-component of the analytic center of
  $\puiseuxP$, is sent by the valuation map to the tropical barycenter of $\tropR$ (by
  Corollary~\ref{coro:analytic_center}). The latter point can be shown to coincide with the value of a vertex of
  $\puiseuxR$, using the arguments of the proof of Proposition~\ref{prop:simplex}. It suffices to observe that an index
  $i_j$ can be found for all $j \in [n]$, which provides a subsystem of $n$ linearly independent inequalities.
\end{remark}

\section{Long tropical central paths and ordinary central paths with high curvature}
\noindent
Bezem, Nieuwenhuis and Rodr{\'{\i}}guez-Carbonell~\cite{BezemNieuwenhuisRodriguez08} constructed a class of tropical
linear equalities for which an algorithm of Butkovi\v{c} and Zimmermann~\cite{Butkovic2006} exhibits an exponential
running time. This gives rise to tropical linear programs which we lift to the Hardy field $ \K =H(\mystruct)$. From this, we obtain a one-parameter family of ordinary linear programs over the reals. The latter are interesting as their central
paths have an unusually high total curvature, as we shall see in Section~\ref{subsec:curvature}.

\begin{figure}[t]
  \resizebox{!}{.4\textwidth}{ 

\begin{tikzpicture}[x  = {(0.788528674827569cm,0.573568948029112cm)},
                    y  = {(0.740156303431162cm,-0.152008463919665cm)},
                    z  = {(0.00777409926220149cm,0.450108449723558cm)},
                    scale = 1,
                    color = {lightgray}]

  \definecolor{pointcolor_p1}{rgb}{ 1,0,0 }
  \tikzstyle{pointstyle_p1} = [fill=pointcolor_p1]

  \coordinate (v0) at (1.14286, 1.62314, 3.8299);
  \coordinate (v1) at (1.14286, 1.62314, 0.0818038);
  \coordinate (v2) at (2, 0, 2.82843);
  \coordinate (v3) at (2, 0, 0);
  \coordinate (v4) at (0, 0, 0);
  \coordinate (v5) at (0, 4, 0);
  \coordinate (v6) at (0, 4, 5.65685);
  \coordinate (v7) at (2, 4, 0);
  \coordinate (v8) at (1.14286, 2.94829, 0.0818038);
  \coordinate (v9) at (2, 4, 8.48528);
  \coordinate (v10) at (1.14286, 2.94829, 5.70395);

  \definecolor{linecolor_p1}{rgb}{ 0.4666666667,0.9254901961,0.6196078431 }
  \tikzstyle{linestyle_p1} = [color=linecolor_p1, thick]

  \definecolor{linecolor_p1_v1_v0}{rgb}{ 0.4667,0.9255,0.6196 }

  \definecolor{linecolor_p1_v2_v0}{rgb}{ 0.4667,0.9255,0.6196 }

  \definecolor{linecolor_p1_v3_v1}{rgb}{ 0.4667,0.9255,0.6196 }

  \definecolor{linecolor_p1_v3_v2}{rgb}{ 0,0,0 }

  \definecolor{linecolor_p1_v4_v0}{rgb}{ 0.4667,0.9255,0.6196 }

  \definecolor{linecolor_p1_v4_v1}{rgb}{ 0.4667,0.9255,0.6196 }

  \definecolor{linecolor_p1_v4_v2}{rgb}{ 0,0,0 }

  \definecolor{linecolor_p1_v4_v3}{rgb}{ 0,0,0 }

  \definecolor{linecolor_p1_v5_v4}{rgb}{ 0,0,0 }

  \definecolor{linecolor_p1_v6_v4}{rgb}{ 0,0,0 }

  \definecolor{linecolor_p1_v6_v5}{rgb}{ 0,0,0 }

  \definecolor{linecolor_p1_v7_v3}{rgb}{ 0,0,0 }

  \definecolor{linecolor_p1_v7_v5}{rgb}{ 0,0,0 }

  \definecolor{linecolor_p1_v8_v1}{rgb}{ 0.4667,0.9255,0.6196 }

  \definecolor{linecolor_p1_v8_v5}{rgb}{ 0.4667,0.9255,0.6196 }

  \definecolor{linecolor_p1_v8_v7}{rgb}{ 0.4667,0.9255,0.6196 }

  \definecolor{linecolor_p1_v9_v2}{rgb}{ 0,0,0 }

  \definecolor{linecolor_p1_v9_v6}{rgb}{ 0,0,0 }

  \definecolor{linecolor_p1_v9_v7}{rgb}{ 0,0,0 }

  \definecolor{linecolor_p1_v10_v0}{rgb}{ 0.4667,0.9255,0.6196 }

  \definecolor{linecolor_p1_v10_v6}{rgb}{ 0.4667,0.9255,0.6196 }

  \definecolor{linecolor_p1_v10_v8}{rgb}{ 0.4667,0.9255,0.6196 }

  \definecolor{linecolor_p1_v10_v9}{rgb}{ 0.4667,0.9255,0.6196 }

  \draw[linestyle_p1, linecolor_p1_v3_v2, dashed] (v3) -- (v2);
  \draw[linestyle_p1, linecolor_p1_v4_v3, dashed] (v4) -- (v3);
  \draw[linestyle_p1, linecolor_p1_v7_v3, dashed] (v7) -- (v3);
  \draw[linestyle_p1, linecolor_p1_v1_v0] (v1) -- (v0);
  \draw[linestyle_p1, linecolor_p1_v2_v0] (v2) -- (v0);
  \draw[linestyle_p1, linecolor_p1_v3_v1] (v3) -- (v1);
  \draw[linestyle_p1, linecolor_p1_v4_v0] (v4) -- (v0);
  \draw[linestyle_p1, linecolor_p1_v4_v1] (v4) -- (v1);
  \draw[linestyle_p1, linecolor_p1_v8_v1] (v8) -- (v1);
  \draw[linestyle_p1, linecolor_p1_v8_v5] (v8) -- (v5);
  \draw[linestyle_p1, linecolor_p1_v8_v7] (v8) -- (v7);
  \draw[linestyle_p1, linecolor_p1_v10_v0] (v10) -- (v0);
  \draw[linestyle_p1, linecolor_p1_v10_v6] (v10) -- (v6);
  \draw[linestyle_p1, linecolor_p1_v10_v8] (v10) -- (v8);
  \draw[linestyle_p1, linecolor_p1_v10_v9] (v10) -- (v9);

  \draw[linestyle_p1, linecolor_p1_v4_v2] (v4) -- (v2);
  \draw[linestyle_p1, linecolor_p1_v5_v4] (v5) -- (v4);
  \draw[linestyle_p1, linecolor_p1_v6_v4] (v6) -- (v4);
  \draw[linestyle_p1, linecolor_p1_v6_v5] (v6) -- (v5);
  \draw[linestyle_p1, linecolor_p1_v7_v5] (v7) -- (v5);
  \draw[linestyle_p1, linecolor_p1_v9_v2] (v9) -- (v2);
  \draw[linestyle_p1, linecolor_p1_v9_v6] (v9) -- (v6);
  \draw[linestyle_p1, linecolor_p1_v9_v7] (v9) -- (v7);

  \node at (v4) [inner sep=0.5pt, below left, black] {\tiny{$(0,0,0,0)$}};
  \fill[pointcolor_p1] (v4) circle (1 pt);

  \node at (v3) [inner sep=0.5pt, above left, black] {\colorbox{white}{\tiny{$(t,0,0,0)$}}};
  \fill[pointcolor_p1] (v3) circle (1 pt);

  \node at (v1) [rotate=45, inner sep=0.5pt, below left, black] {\colorbox{white}{\tiny{$(t,t,t^2,0)$}}};
  \fill[pointcolor_p1] (v1) circle (1 pt);

  \node at (v5) [inner sep=0.5pt, below right, black] {\tiny{$(0,t^2,0,0)$}};
  \fill[pointcolor_p1] (v5) circle (1 pt);

  \node at (v2) [inner sep=0.5pt, above left, black] {\tiny{$(t,0,0,t^{3/2})$}};
  \fill[pointcolor_p1] (v2) circle (1 pt);

  \node at (v7) [inner sep=0.5pt, above right, black] {\tiny{$(t,t^2,0,0)$}};
  \fill[pointcolor_p1] (v7) circle (1 pt);

  \node at (v0) [rotate=45, inner sep=0.5pt, above right, black] {\colorbox{white}{\tiny{$(t,t,t^2,2t^{3/2})$}}};
  \fill[pointcolor_p1] (v0) circle (1 pt);

  \node at (v6) [inner sep=0.5pt, below right, black] {\colorbox{white}{\tiny{$(0,t^2,0,t^{3/2})$}}};
  \fill[pointcolor_p1] (v6) circle (1 pt);

  \node at (v8) [inner sep=0.5pt, below right, black] {\colorbox{white}{\tiny{$(t,t^2,t^2,0)$}}};
  \fill[pointcolor_p1] (v8) circle (1 pt);

  \node at (v10) [inner sep=0.5pt, above right, black] {\colorbox{white}{\tiny{$(t,t^2,t^2,t^{5/2}+t^{3/2})$}}};
  \fill[pointcolor_p1] (v10) circle (1 pt);
  \node at (v9) [inner sep=0.5pt, above right, black] {\tiny{$(t,t^2,0,t^{5/2} + t^{3/2})$}};
  \fill[pointcolor_p1] (v9) circle (1 pt);

\end{tikzpicture}
}
  \caption{Schlegel diagram of $\puiseuxR_1$ (and $t\geq2$), projected onto the facet $\bm u_1=0$; the points are written in $(\bm u_0, \bm v_0, \bm u_1, \bm v_1)$-coordinates.} \label{fig:schlegel}
\end{figure}

Given a positive integer $r$, we introduce the following linear program over the Hardy field $\K$ in the $2r+2$
variables $\bm u_0, \bm v_0, \bm u_1, \bm v_1, \dots, \bm u_r, \bm v_r$:
\[
\begin{array}{r@{\quad}l}
\text{minimize} & \, \bm v_0 \\[1ex]
\text{subject to} 
&
\begin{aligned}[t]
\bm u_0 & \leq t \, , \ \bm v_0 \leq t^2 \\
& \left.\mybackup{\bm u_i}\begin{aligned}
\bm u_i & \leq t \bm u_{i-1} \, , \ \bm u_i \leq t\bm v_{i-1} \vphantom{t^{1 - \frac{1}{2}}} \\
\bm v_i & \leq t^{1 - \frac{1}{2^{i}}} (\bm u_{i-1} + \bm v_{i-1}) 
\end{aligned}\; \right\} \qquad \text{for} \ 1\leq i\leq r\\
\bm u_r & \geq 0 \, , \  \bm v_r \geq 0 \, . \vphantom{t^{1 - \frac{1}{2}}}  
\end{aligned}
\end{array}
\]
The optimal value of this linear program equals $0$, and an optimal solution is given by $\bm u = \bm v = 0$.  Moreover,
the feasible set $\puiseuxR_r$ is a polytope contained in the positive orthant, and the $3r+4$ inequalities listed
define its facets. In particular, the remaining non-negativity constraints $\bm u_i\ge0$ and $\bm v_i\ge 0$ for $0 \leq
i < r$ are satisfied but redundant.  For sufficiently large real $t$ each minor of the real constraint matrix
stabilizes, and thus the chirotope stabilizes, too.  It follows that the combinatorial type of the real polytopes
$\puiseuxR_r(t)$ stabilizes for $t$ sufficiently large, and this combinatorial type coincides with the combinatorial type of
the Hardy polytope $\puiseuxR_r$.  Figure~\ref{fig:schlegel} shows an example for $r=1$ and $t\geq2$, which \emph{is}
large enough in this case.

In our subsequent analysis we will work with the equivalent linear program expressed with the $3r+2$ slack variables
$\bm h_i$, $\bm z_i$, $\bm z_i'$, for $0\leq i\leq r$, but $\bm z_0'$ does not occur:
\begin{equation} \tag*{$\hardyex_r$} \label{eq:cex}
\begin{array}{r@{\quad}l}
\text{minimize} & \, \bm v_0 \\[1.25ex]
\text{subject to} & 
\begin{aligned}[t]
\bm u_0 + \bm z_{0} & = t \\
\bm v_0 + \bm h_{0} & = t^2 \vphantom{t^{1 - \frac{1}{2}}} \\
&
\left.\mybackup{\bm u_i + \bm z'_i}\begin{aligned}
\bm u_i +\bm z_{i} & = t \bm u_{i-1} \vphantom{t^{1 - \frac{1}{2}}} \\
\bm u_i +\bm z'_{i} & =  t\bm v_{i-1} \vphantom{t^{1 - \frac{1}{2}}} \\
\bm v_i + \bm h_{i} & = t^{1 - \frac{1}{2^{i}}} (\bm u_{i-1} + \bm v_{i-1}) 
\end{aligned}\; \right\} && \text{for} \ 1\leq i\leq r\\[1ex]
& \left.\mybackup{\bm u_i + \bm z'_i}\begin{aligned}[b]
\bm u_i & \geq 0 \, , & \bm v_i & \geq 0 \, , & \bm z_i & \geq 0 \, , & \bm h_i & \geq 0 
\end{aligned}\right. && \text{for} \ 0 \leq i\leq r \\
& \left.\mybackup{\bm u_i + \bm z'_i}\begin{aligned}[b]
&\bm z_i' \, \geq 0 
\end{aligned}\right. && \text{for} \ 1 \leq i\leq r \, .
\end{aligned}
\end{array}
\end{equation}
We denote by $\puiseuxP_r\subset\K^{5r+4}$ the feasible set of~\ref{eq:cex}. Assumption~\ref{ass:hardy_central_path_defined} is satisfied because $\puiseuxP_r$ is a polytope, and we can find a strictly feasible point by setting, \eg, $\bm u_0=\bm z_0=\frac{1}{2}t$, $\bm v_0=\bm h_0=\frac{1}{2}t^2$ and, inductively,
\[
\bm u_i=\bm z_i=\frac{1}{2}t\bm u_{i-1} \,, \quad \bm z_i'=t\bm v_{i-1}-\bm u_i \,, \quad \bm v_i=\bm
h_i=\frac{1}{2}t^{1 - \frac{1}{2^{i}}} (\bm u_{i-1} + \bm v_{i-1}) \, .
\]

\subsection{Computing the primal tropical central path}

We will determine the tropical central path arising from the linear program~\ref{eq:cex} using the characterization
established in Section~\ref{sec:central_path}. We focus on the primal component, as the dual can be easily obtained from
the former by the relations~\eqref{eq:primal_dual}. We denote by $\troppathprimal(\lambda)$ the point on the primal
tropical central path associated with the parameter $\lambda \in \R$. By Corollary~\ref{cor:geometric_characterization},
that point corresponds to the barycenter of the tropical polyhedron $\val(\puiseuxP_r)$ intersected with the
sublevel set $v_0 \leq \lambda$. 

The formal tropicalization of the objective function and the constraints of \ref{eq:cex} yields the tropical linear
program
\begin{equation} \tag*{$\tropex_r$} \label{eq:tex}
\begin{array}{r@{\quad}l}
\text{minimize} & \, v_0 \\[1.25ex]
\text{subject to} & 
\begin{aligned}[t]
\max(u_0,z_0) & = 1 \\
\max(v_0,h_0) & = 2 \\
& \left.\mybackup{\max(v_i, h_i)}\begin{aligned}
\max(u_i,z_i) & = 1+ u_{i-1} \\ 
\max(u_i,z'_i) &  = 1+ v_{i-1} \\
\max(v_i, h_i) & = 1 - \frac{1}{2^{i}} + \max (u_{i-1}, v_{i-1}) 
\end{aligned}\; \right\} \qquad \text{for} \ 1 \leq i \leq r \, .
\end{aligned}
\end{array}
\end{equation}
For any real $\lambda$ we call the intersection of the feasible region of \ref{eq:tex} with the tropical halfspace $\{
(u,v,z,z',h) \in \trop^{5r+4} \mid v_0 \leq \lambda\}$ the \emph{$\lambda$-sublevel set} of \ref{eq:tex}.

If we would verify that the extended matrix of \ref{eq:cex} is tropically sign generic, then we could apply
Theorem~\ref{thm:val_inter_commutes} to conclude that the feasible region of \ref{eq:tex} is precisely the image of the
feasible region of \ref{eq:cex} under the valuation map.  Moreover, we could then apply
Corollary~\ref{cor:geometric_characterization} to see that the tropical central path is given by the tropical barycenter
of the $\lambda$-sublevel set.  However, we prefer to avoid the somewhat tedious verification of tropical sign
genericity. Instead, we first compute in Lemma~\ref{lem-dynamic} the tropical barycenter of the $\lambda$-sublevel set
of \ref{eq:tex}, and show that it coincides with the orbit of a piecewise linear dynamical system. Then, we will show
directly, by an elementary argument, that this orbit is the tropical central path, see Proposition~\ref{prop:lift}.  The
dynamical system involves the following family of transition maps
\[
G_i(a,b) := \Bigl(1+\min(a,b) \, , \, 1-\frac{1}{2^i} + \max(a,b)\Bigr) \, ,
\qquad 1\leq i \leq r
\]
from $\R^2$ to itself.  

\begin{lemma}\label{lem-dynamic}
  For each $\lambda\in \R$, the point $(u(\lambda),v(\lambda),z(\lambda),z'(\lambda),h(\lambda))$ is the tropical
  barycenter of the $\lambda$-sublevel set of\/ \ref{eq:tex}, where:
\begin{subequations}
  \begin{align}
    \qquad\qquad\qquad & \left.\mybackup{(u_i(\lambda), v_i(\lambda))}\begin{aligned}
        u_0(\lambda) & = 1 \, ,\quad v_0 (\lambda)= \min(2,\lambda) \\ 
        (u_i(\lambda) , v_i(\lambda)) & = G_i(u_{i-1}(\lambda),v_{i-1}(\lambda)) \qquad \text{for} \ 1\leq i\leq r \,, 
      \end{aligned}\right.\label{e-dynamics}
    \\
    & \left.\mybackup{z_0(\lambda)}\begin{aligned}
        z_0(\lambda) & =1 \,,  \quad h_0(\lambda)=2 \\
        z_i(\lambda) & =1+u_{i-1}(\lambda) \,, \quad z'_i(\lambda)=1+v_{i-1}(\lambda) \,,\quad
        h_i(\lambda) = v_i(\lambda) \qquad \text{for} \ 1\leq i\leq r \, .
      \end{aligned}\right.\label{e-dynamics2}
  \end{align}
\end{subequations}
\end{lemma}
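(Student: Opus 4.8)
The plan is to use the definition of the tropical barycenter as the componentwise largest element of a nonempty compact tropically convex set. First I would note that the $\lambda$-sublevel set of~\ref{eq:tex} is a tropical polyhedron: each equality constraint $\max(\cdot,\cdot)=\rho$ is the intersection of the two tropical halfspaces $\max(\cdot,\cdot)\le\rho$ and $\rho\le\max(\cdot,\cdot)$ (both of the required form, allowing variables on either side), and the sublevel constraint $v_0\le\lambda$ is itself a tropical halfspace; hence the set is closed and tropically convex. Write $P(\lambda):=(u(\lambda),v(\lambda),z(\lambda),z'(\lambda),h(\lambda))$ for the candidate point defined in~\eqref{e-dynamics}--\eqref{e-dynamics2}. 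It then suffices to establish: (a) $P(\lambda)$ lies in the $\lambda$-sublevel set; and (b) every point of the $\lambda$-sublevel set is $\le P(\lambda)$ componentwise. Indeed, (a) shows the set is nonempty and (a)--(b) together say that $P(\lambda)$ is its componentwise maximum; in particular every coordinate of the sublevel set is bounded above by a finite number, so the set is a closed subset of the compact box $\prod_k[-\infty,P(\lambda)_k]$, hence compact, and its barycenter is well-defined and equals $P(\lambda)$.

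For (a) I would verify the defining (in)equalities of~\ref{eq:tex} directly from the recursion. The base cases are $\max(u_0(\lambda),z_0(\lambda))=\max(1,1)=1$, $\max(v_0(\lambda),h_0(\lambda))=\max(\min(2,\lambda),2)=2$, and $v_0(\lambda)=\min(2,\lambda)\le\lambda$. For $1\le i\le r$, since $u_i(\lambda)=1+\min(u_{i-1}(\lambda),v_{i-1}(\lambda))$ we have $u_i(\lambda)\le 1+u_{i-1}(\lambda)=z_i(\lambda)$, whence $\max(u_i(\lambda),z_i(\lambda))=1+u_{i-1}(\lambda)$; likewise $u_i(\lambda)\le 1+v_{i-1}(\lambda)=z'_i(\lambda)$ gives $\max(u_i(\lambda),z'_i(\lambda))=1+v_{i-1}(\lambda)$; and $h_i(\lambda)=v_i(\lambda)$ gives $\max(v_i(\lambda),h_i(\lambda))=v_i(\lambda)=1-\frac{1}{2^i}+\max(u_{i-1}(\lambda),v_{i-1}(\lambda))$.

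For (b) I would induct on $i$. Let $(u,v,z,z',h)$ be feasible. From $\max(u_0,z_0)=1$ we get $u_0\le1=u_0(\lambda)$ and $z_0\le1=z_0(\lambda)$; from $\max(v_0,h_0)=2$ and $v_0\le\lambda$ we get $v_0\le\min(2,\lambda)=v_0(\lambda)$ and $h_0\le2=h_0(\lambda)$. Assuming $u_{i-1}\le u_{i-1}(\lambda)$ and $v_{i-1}\le v_{i-1}(\lambda)$, the constraint $\max(u_i,z_i)=1+u_{i-1}$ yields $u_i\le 1+u_{i-1}(\lambda)$ and $z_i\le z_i(\lambda)$; the constraint $\max(u_i,z'_i)=1+v_{i-1}$ yields $u_i\le 1+v_{i-1}(\lambda)$ and $z'_i\le z'_i(\lambda)$; combining the two upper bounds on $u_i$ gives exactly $u_i\le 1+\min(u_{i-1}(\lambda),v_{i-1}(\lambda))=u_i(\lambda)$, the first coordinate of $G_i$; and $\max(v_i,h_i)=1-\frac{1}{2^i}+\max(u_{i-1},v_{i-1})\le v_i(\lambda)$ gives $v_i\le v_i(\lambda)$ and $h_i\le h_i(\lambda)$. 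This closes the induction and proves (b).

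This is a monotone, essentially telescoping computation, so I do not anticipate a genuine obstacle. The only points needing care are: keeping track of which tropical inequality caps which slack coordinate; noticing that intersecting the two upper bounds $u_i\le 1+u_{i-1}(\lambda)$ and $u_i\le 1+v_{i-1}(\lambda)$ produces precisely the $\min$ in the first component of $G_i$, which is what makes the dynamical system~$G_i$ appear; and the tropical arithmetic conventions when a coordinate of a feasible point equals $-\infty$, which causes no trouble since $P(\lambda)$ has only finite entries, so all inequalities above remain valid.
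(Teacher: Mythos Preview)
Your proof is correct and follows essentially the same approach as the paper's: relax the equalities to the componentwise inequalities~\eqref{e-sat}, use the monotonicity of the recursion (your induction on $i$ is exactly the unwinding of ``the maps $G_i$ are order preserving'') to see that the relaxed system is maximized at $P(\lambda)$, and then check that $P(\lambda)$ actually satisfies the equalities. Your version is more explicit about compactness and well-definedness of the barycenter, which the paper leaves implicit, but there is no substantive difference in strategy.
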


\begin{proof}
First observe that every point of the $\lambda$-sublevel set satisfies the inequalities:
  \begin{equation}\label{e-sat}
    \begin{gathered}
      u_0 \leq 1 \, , \  v_0 \leq \min(2,\lambda) \,, \  (u_i,v_i) \leq G_i(u_{i-1},v_{i-1}) 
      \\
      z_0\leq 1 \,, \  h_0 \leq 2 \,, \  z_i \leq 1+ u_{i-1} \,,\ 
      z'_i \leq  1+v_{i-1} \,,\   h_i \leq 1 - \frac{1}{2^i} + \max(u_{i-1},v_{i-1})
      \, ,
    \end{gathered}
  \end{equation}
  where $1 \leq i \leq r$. Since the maps $G_i$ are order preserving, the barycenter of the tropical polyhedron defined by~\eqref{e-sat} is the point obtained by attaining equality in~\eqref{e-sat}. Therefore, it is given by the point $(u(\lambda),v(\lambda),z(\lambda),z'(\lambda),h(\lambda))$. As the latter point belongs to the $\lambda$-sublevel set it must be the tropical barycenter.
\end{proof}

We can now give a complete parameterization of the primal tropical central path.

\begin{proposition}\label{prop:lift}
  For each $\lambda\in \R$, the point $\troppathprimal(\lambda)$ on the primal tropical central path coincides with the
  tropical barycenter of the $\lambda$-sublevel set of the tropical linear program~\ref{eq:tex}.
\end{proposition}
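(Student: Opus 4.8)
The plan is to combine Corollary~\ref{cor:geometric_characterization} with Lemma~\ref{lem-dynamic}. Write $p(\lambda) := (u(\lambda),v(\lambda),z(\lambda),z'(\lambda),h(\lambda))$ for the point defined by~\eqref{e-dynamics}--\eqref{e-dynamics2}. Since the cost vector of~\ref{eq:cex} is the nonnegative unit vector supported on the $\bm v_0$-coordinate and the optimal value of~\ref{eq:cex} equals $0$, Corollary~\ref{cor:geometric_characterization} applies with $\puiseuxP=\puiseuxP_r$ and identifies $\troppathprimal(\lambda)$ with the tropical barycenter of
\[
Q_\lambda \;:=\; \val(\puiseuxP_r) \cap \bigl\{ (u,v,z,z',h) \in \trop^{5r+4} \mid v_0 \leq \lambda \bigr\} \, ,
\]
that is, with the coordinatewise supremum $\sup Q_\lambda$. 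On the other hand, Lemma~\ref{lem-dynamic} exhibits $p(\lambda)$ as the tropical barycenter of the $\lambda$-sublevel set $S_\lambda$ of~\ref{eq:tex}, hence as $\sup S_\lambda$. So it suffices to prove $\sup Q_\lambda = p(\lambda)$, which I would do by a sandwich argument: first $Q_\lambda \subseteq S_\lambda$, and then $p(\lambda) \in Q_\lambda$.

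For the inclusion, observe that each defining equation of~\ref{eq:cex} has the shape ``a sum of (nonnegative) variables equals a monomial $t^\alpha$ times a sum of (nonnegative) variables''. Applying the valuation map and using that $\val$ sends a sum of nonnegative elements of $\K$ to the maximum of their valuations and that $\val(t^\alpha)=\alpha$ (see~\eqref{e-morphism}), one sees that for every $\bm q\in\puiseuxP_r$ the vector $\val(\bm q)$ satisfies the corresponding tropical equation of~\ref{eq:tex}. Thus $\val(\puiseuxP_r)$ lies in the feasible region of~\ref{eq:tex}, and intersecting with the halfspace $\{v_0\leq\lambda\}$ gives $Q_\lambda\subseteq S_\lambda$; consequently $\sup Q_\lambda \leq \sup S_\lambda = p(\lambda)$.

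For the reverse inequality I would construct an explicit lift $\bm q^\bullet\in\puiseuxP_r$ with $\val(\bm q^\bullet)=p(\lambda)$; since the $\bm v_0$-entry of $p(\lambda)$ is $v_0(\lambda)=\min(2,\lambda)\leq\lambda$, this yields $p(\lambda)\in Q_\lambda$ and hence $\sup Q_\lambda \geq p(\lambda)$, which finishes the proof. The lift mimics the recursion~\eqref{e-dynamics}--\eqref{e-dynamics2} inside $\K$: put $\bm u_0=\bm z_0=\tfrac12 t$; put $(\bm v_0,\bm h_0)=(\tfrac12 t^2,\tfrac12 t^2)$ if $\lambda\geq 2$ and $(\bm v_0,\bm h_0)=(t^\lambda,\,t^2-t^\lambda)$ otherwise; and for $1\leq i\leq r$ set
\[
\bm u_i:=\tfrac12\min(t\bm u_{i-1},t\bm v_{i-1}),\qquad \bm v_i=\bm h_i:=\tfrac12 t^{1-1/2^i}(\bm u_{i-1}+\bm v_{i-1}),\qquad \bm z_i:=t\bm u_{i-1}-\bm u_i,\qquad \bm z'_i:=t\bm v_{i-1}-\bm u_i \, .
\]
A straightforward induction shows that all entries are ultimately strictly positive, so $\bm q^\bullet\in\puiseuxP_r$, and that their valuations coincide with~\eqref{e-dynamics}--\eqref{e-dynamics2}. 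The only facts used are that the valuation of a sum of positive elements equals the maximum of the valuations and is monotone, and that $\tfrac12\min(t\bm u_{i-1},t\bm v_{i-1})$ is squeezed strictly between $0$ and each of $t\bm u_{i-1},t\bm v_{i-1}$, so that $\bm z_i$ lies between $\tfrac12 t\bm u_{i-1}$ and $t\bm u_{i-1}$ and therefore has valuation $1+\val(\bm u_{i-1})$, and likewise for $\bm z'_i$.

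The main obstacle is exactly the construction of this lift in the degenerate case $\val(\bm u_{i-1})=\val(\bm v_{i-1})$: a naive choice of $\bm u_i$ as a scaled sum of $t\bm u_{i-1}$ and $t\bm v_{i-1}$ could suffer leading-term cancellation in $\bm z_i$ or $\bm z'_i$ and spoil the prescribed valuation, which is why the field-minimum is used; a minor nuisance is the case distinction at $\lambda=2$ forced by $v_0(\lambda)=\min(2,\lambda)$.
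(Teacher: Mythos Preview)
Your argument is correct and follows the same overall strategy as the paper: use Corollary~\ref{cor:geometric_characterization} together with the containment $\val(\puiseuxP_r)\subseteq\{\text{feasible region of }\tropex_r\}$ for the upper bound, and exhibit an explicit lift of $p(\lambda)$ into $\puiseuxP_r$ for the lower bound.

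The only substantive difference lies in the construction of the lift. The paper fixes a strictly decreasing sequence $\alpha_0=\tfrac12>\alpha_1>\dots>\alpha_r>0$ and sets $\bm u_i=\alpha_i t^{u_i(\lambda)}$, $\bm v_i=\alpha_i t^{v_i(\lambda)}$; the slack variables are then shown to be nonnegative because $\alpha_{i-1}>\alpha_i$ absorbs any leading-term interaction, with a small case distinction $\epsilon_i(\lambda)\in\{1,2\}$ when $u_{i-1}(\lambda)=v_{i-1}(\lambda)$. Your use of the field minimum $\bm u_i=\tfrac12\min(t\bm u_{i-1},t\bm v_{i-1})$ is a neat alternative: it guarantees $\bm u_i\leq\tfrac12 t\bm u_{i-1}$ and $\bm u_i\leq\tfrac12 t\bm v_{i-1}$ simultaneously, so $\bm z_i,\bm z'_i$ are squeezed between $\tfrac12$ and $1$ times the relevant monomial and automatically have the right valuation, without any auxiliary sequence or tie-breaking analysis. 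Both approaches are equally short; yours trades the explicit coefficients for an appeal to the total order on~$\K$.
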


\begin{proof}
  The set $\val(\puiseuxP_r)$ is contained in the feasible region of~\ref{eq:tex}.  From
  Corollary~\ref{cor:geometric_characterization} and the previous lemma we have
  \[
  \troppathprimal(\lambda) \leq (u(\lambda),v(\lambda),z(\lambda),z'(\lambda),h(\lambda)) \, .
  \]
  To show that the reverse inequality holds, using Corollary~\ref{cor:geometric_characterization} again, it suffices to
  lift that point to an element of $\puiseuxP_r$.  Let us fix a sequence of positive numbers $\alpha_0
  =\frac{1}{2}>\alpha_1>\dots>\alpha_r >0$.  We claim that
  \begin{align*}
    \bm u_i = \alpha_i t^{u_i(\lambda)} \, , \qquad
    \bm v_i = \alpha_i t^{v_i(\lambda)} \, , \qquad 0\leq i\leq r \, ,
  \end{align*}
  yields such an admissible lift. 
  Indeed, 
  $\bm z_0 := \alpha_0 t$ and 
  $\bm h_0 := t^2 - \alpha_0 t^{v_0(\lambda)}$
  satisfy the two first constraints in $\hardyex_r$, and 
  they are such that $\val \bm z_0 = z_0(\lambda)$ and $\val \bm h_0 =h_0(\lambda)$.
  Moreover, for $1\leq i\leq r$,
  \[
  \bm h_i:= 
  t^{1 - \frac{1}{2^{i}}} (\bm u_{i-1} + \bm v_{i-1}) -\bm v_i 
  = (\epsilon_i(\lambda)\alpha_{i-1}-\alpha_i) t^{v_i(\lambda)}
  +o(t^{v_i(\lambda)})\,,
  \]
  where $\epsilon_i(\lambda) = 1$ if $u_{i-1}(\lambda)\neq v_{i-1}(\lambda)$
  and $\epsilon_i(\lambda) = 2$ otherwise. Since $\epsilon_i(\lambda) \alpha_{i-1}>\alpha_i$, we have $\bm h_i \geq 0$, and $\val \bm h_i = h_i(\lambda)$.
  Similarly, 
  \[ 
  \bm z_{i} := t \bm u_{i-1} 
  - \bm u_i = \alpha_{i-1} t^{1+u_{i-1}(\lambda)} 
  - \alpha_{i} t^{u_i(\lambda)} 
  = (\alpha_{i-1}-\alpha_i)t^{z_i(\lambda)}
  + \alpha_i (t^{z_i(\lambda)}-t^{u_i(\lambda)})
  \,,
  \]
  and since $0\leq t^{z_i(\lambda)}-t^{u_i(\lambda)} \leq t^{z_i(\lambda)}$, we deduce
  that $\bm z_i\geq 0$ and $\val \bm z_i=  z_i(\lambda)$.
  Finally, a similar argument shows that 
  \(
  \bm z'_{i}:=  t\bm v_{i-1}
  - \bm u_i \) satisfies
  $\val \bm z'_i\geq 0$ and 
  $\val \bm z'_i =z_i(\lambda)$.
\end{proof}

We now focus on the $(u,v)$-component of the primal tropical central path, since the $(z,z',h)$-component can be easily
determined using~\eqref{e-dynamics2}.
According to~\eqref{e-dynamics}, the coordinate $v_i(\lambda)$ is equal to the maximum of $u_{i-1} (\lambda)$ and $ v_{i-1}(\lambda) $
translated by $1 - \frac{1}{2^{i}}$, while $u_i(\lambda)$ follows the minimum of these two variables shifted by $1$, see
Figure~\ref{fig:trop_counter_ex} for an illustration.  Since the translation offsets differ by $\frac{1}{2^{i}}$, the
components $u_i$ and $v_i$ cross each other $\Omega(2^{i})$ times.  More precisely, our next result shows that the curve
$(u_i(\lambda), v_i(\lambda))$ has the shape of a staircase with $\Omega(2^{i})$ steps.

\begin{figure}[t]
  \centering
  \begin{tikzpicture}[scale =3,
    curve/.style={black, thick}
]
 \draw[help lines, gray!40!] (0,0) grid (2,5); 
 \foreach \x in {0,...,2} { \node [anchor=north] at (\x,0) {\x}; }
 \foreach \y in {0,...,5} { \node [anchor=east] at (0,\y) {\y}; }

\draw[gray!40!, ->] (0,0) -- (2.5,0);
\node[anchor = north east] at (2.5,0) {$\lambda$};

    \foreach \n in {1,2,3,4} {
      \pgfmathsetmacro{\range}{pow(2,\n-1)-1}
      \foreach \k in {0,...,\range} {
          
 \draw[curve] ( 4 * \k / 2^\n   , \n + 2 * \k / 2^\n + 1 / 2^\n) 
 --  ( 4 * \k  / 2^\n  + 2 / 2^\n , \n + 2 * \k / 2^\n + 1 / 2^\n)
--  ( 4 * \k  / 2^\n  + 4 / 2^\n, \n + 2 * \k / 2^\n  + 3 / 2^\n)
;
 \draw[curve, red] ( 4 * \k / 2^\n   , \n + 2 * \k / 2^\n) 
 --  ( 4 * \k  / 2^\n  + 2 / 2^\n , \n + 2 * \k / 2^\n + 2 / 2^\n )
--  ( 4 * \k  / 2^\n  + 4 / 2^\n, \n + 2 * \k / 2^\n  + 2 / 2^\n)
;
      } 
\node[anchor = north west] at (2,\n + 1) {$u_{\n}$};
\node[anchor = south west] at (2,\n + 1 + 1/2^\n) {$v_{\n}$};
\draw[curve,red] (2, \n+1) -- (2.1, \n+1);
\draw[curve] (2, \n + 1 + 1/2^\n) -- (2.1, \n + 1 + 1/2^\n);

\draw[curve,red] (0, \n) -- (-.1, \n - .1);
\draw[curve] (0, \n  + 1/2^\n) -- (-.1, \n  + 1/2^\n);
    }

  \end{tikzpicture}

  \caption{The $(u,v)$-components of the primal tropical central path of $\hardyex_4$ when $0 \leq \lambda \leq 2$.}
  \label{fig:trop_counter_ex}
\end{figure}
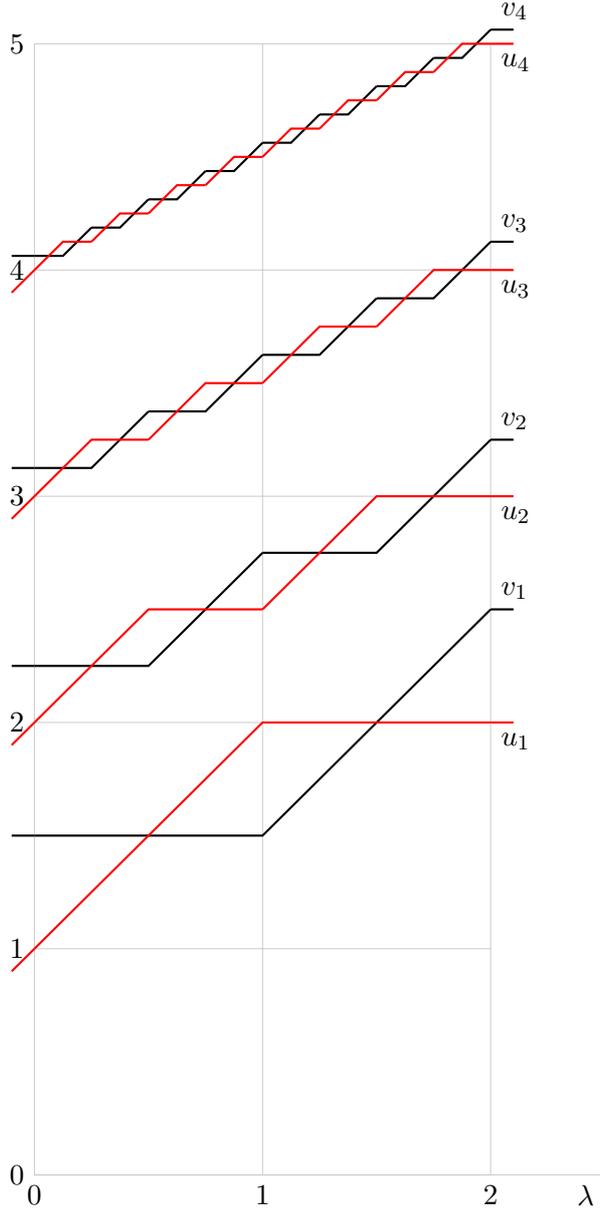

\begin{proposition}\label{prop:counter_ex}
  Let $i\in[r]$ and $k\in\{0, \dots , 2^{i-1} -1\}$.  Then, for all $\lambda \in [\tfrac{4k}{2^{i}}, \tfrac{4k+2}{2^{i}}]$, we have:
\begin{alignat*}{2}
u_{i}(\lambda) & = i + \lambda -\frac{2k}{2^{i}} \quad \text{and} \quad & v_{i}(\lambda) & = i +\frac{2k+1}{2^{i}} \,,
\shortintertext{while for all $\lambda \in [\tfrac{4k+2}{2^{i}}, \tfrac{4k+4}{2^{i}}]$, we have:}
u_{i} (\lambda) & = i + \frac{2k+2}{2^{i}} \quad \text{and} \quad & v_{i} (\lambda) & = i + \lambda - \frac{2k+1}{2^{i}} \, .
\end{alignat*}
\end{proposition}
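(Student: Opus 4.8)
The plan is to prove the formulae by induction on $i$, using only the recursion $(u_i(\lambda),v_i(\lambda)) = G_i(u_{i-1}(\lambda),v_{i-1}(\lambda))$ with $G_i(a,b) = (1+\min(a,b),\,1-2^{-i}+\max(a,b))$ and the initial data $u_0(\lambda)=1$, $v_0(\lambda)=\min(2,\lambda)$ from~\eqref{e-dynamics}. Note first that the intervals $[\tfrac{4k}{2^i},\tfrac{4k+2}{2^i}]$ and $[\tfrac{4k+2}{2^i},\tfrac{4k+4}{2^i}]$ for $0\le k\le 2^{i-1}-1$ partition $[0,2]$, so the statement is exactly an explicit piecewise-affine description of $(u_i,v_i)$ on $[0,2]$. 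For the base case $i=1$ only $k=0$ occurs: for $\lambda\in[0,1]$ one has $u_1(\lambda)=1+\min(1,\lambda)=1+\lambda$ and $v_1(\lambda)=\tfrac12+\max(1,\lambda)=\tfrac32$, while for $\lambda\in[1,2]$ one gets $u_1(\lambda)=2$ and $v_1(\lambda)=\lambda+\tfrac12$; both agree with the claimed values.

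For the inductive step, assume the formula holds at level $i$ and fix one of the level-$i$ intervals. On a \emph{first-type} interval $[\tfrac{4k}{2^i},\tfrac{4k+2}{2^i}]$ the induction hypothesis gives $u_i(\lambda)=i+\lambda-\tfrac{2k}{2^i}$ (affine of slope $1$) and $v_i(\lambda)=i+\tfrac{2k+1}{2^i}$ (constant), and a one-line computation shows $u_i(\lambda)=v_i(\lambda)$ precisely at the midpoint $\lambda=\tfrac{4k+1}{2^i}$; thus the interval splits into a left half where $u_i\le v_i$ and a right half where $u_i\ge v_i$. These two halves are exactly the two consecutive level-$(i+1)$ intervals $[\tfrac{4k''}{2^{i+1}},\tfrac{4k''+2}{2^{i+1}}]$ and $[\tfrac{4k''+2}{2^{i+1}},\tfrac{4k''+4}{2^{i+1}}]$ with $k''=2k$. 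On each half we substitute the induction-hypothesis expressions into $G_{i+1}$, choosing $\min$ and $\max$ according to the sign of $u_i-v_i$, and check that the output matches the claimed level-$(i+1)$ formula with index $k''=2k$. One argues symmetrically on a \emph{second-type} interval $[\tfrac{4k+2}{2^i},\tfrac{4k+4}{2^i}]$, where now $v_i$ is the affine piece, the crossing happens at the midpoint $\tfrac{4k+3}{2^i}$, and the two halves are the level-$(i+1)$ intervals with index $k''=2k+1$. Since $k$ ranges over $\{0,\dots,2^{i-1}-1\}$, the resulting indices $k''=2k$ and $k''=2k+1$ exhaust $\{0,\dots,2^i-1\}$, so every level-$(i+1)$ interval is covered, and the induction closes. (A quick check that the piecewise formulae agree at the shared endpoints is immediate from the displayed expressions.)

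The only real difficulty is bookkeeping: correctly tracking the reindexing $k\mapsto 2k$ and $k\mapsto 2k+1$ when a level-$i$ interval subdivides, identifying which half becomes a first-type and which a second-type level-$(i+1)$ interval, and making sure the $\min/\max$ are taken on the correct side of each crossing point. There is no analytic subtlety: once the midpoint crossing and the sign of $u_i-v_i$ are pinned down, everything reduces to substituting affine functions into $\min$ and $\max$ and simplifying fractions with denominator $2^{i+1}$, which I would relegate to a short displayed computation rather than spelling out all four subcases in full.
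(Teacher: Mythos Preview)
Your proof is correct and follows essentially the same approach as the paper: induction on $i$ via the recursion~\eqref{e-dynamics}, with the same base case and the same splitting of each level-$i$ interval at its midpoint where $u_i$ and $v_i$ cross. The only difference is organizational: the paper indexes the inductive step by the target index $k\in\{0,\dots,2^i-1\}$ at level $i+1$ and recovers the level-$i$ data via $k'=\lfloor k/2\rfloor$ (splitting on the parity of $k$), whereas you index by the source level-$i$ interval and push forward via $k''=2k$ or $k''=2k+1$; these are inverse bookkeeping schemes for the same computation.
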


\begin{proof}
Let us fix $\lambda \in [0,2]$, and we simply denote the $u_i(\lambda)$ and $v_i(\lambda)$ by $u_i$ and $v_i$ respectively.  We proceed by induction on $i\in[r]$. It follows from~\eqref{e-dynamics} that 
\[
u_1 = 1 + \min(1, \lambda) \, , \qquad v_1 = \frac{1}{2} + \max(1, \lambda)
\, .
\]
Thus for $\lambda \in [0, 1]$, $u_1 = 1+\lambda $ and $v_1 = 1 + \frac{1}{2}$.  For $\lambda
\in [1, 2]$ we have $u_1 = 1 + 1$ and $v_1 = 1 + \lambda - \frac{1}{2} $.  Consequently, the statement holds for $i=1$.

We suppose the result is verified for $i < r$, and show that it holds for $i+1$. Let $k \in \{ 0, \dots , 2^i -1 \}$.
If $k$ is even, let $k'=k/2$.  Then, for all $\lambda$ in the interval $[\frac{4k}{2^{i+1}}, \frac{4k+4}{2^{i+1}}] =
[\frac{4k'}{2^{i}}, \frac{4k'+2}{2^{i}}]$, we have by induction:
\[
u_{i}=i+ \lambda -\frac{2k'}{2^{i}} = i+ \lambda - \frac{k}{2^{i}} \quad \text{and} \quad
v_i= i +\frac{2k'+1}{2^{i}} = i+\frac{k+1}{2^{i}}  \, .
\]
Then,
\[
u_{i+1} =  i+1 + \min\Bigl(\frac{k+1}{2^{i}} ,\lambda - \frac{k}{2^{i}} \Bigr) \quad \text{and} \quad
v_{i+1} =  i+1 + \max \Bigl(  \frac{k+1}{2^{i}}, \lambda- \frac{k}{2^{i}} \Bigr) -\frac{1}{2^{i+1}} \, .
\]
Separating the cases $\lambda \leq \frac{4k+2}{2^{i+1}}$ and $\lambda \geq \frac{4k+2}{2^{i+1}}$ leads to the expected result.

If $k$ is odd, $k=2k'+1$, then for any $\lambda \in [\frac{4k}{2^{i+1}}, \frac{4k+4}{2^{i+1}}] = [\frac{4k'+2}{2^{i}},
\frac{4k'+4}{2^{i}}]$ we have:
\[
u_{i} = i +\frac{2k'+2}{2^{i}}  = i +\frac{k+1}{2^{i}} \quad \text{and} \quad
v_{i} = i + \lambda - \frac{2k'+1}{2^{i}} = i + \lambda- \frac{k}{2^{i}} \, .
\]
Thus,
\[
u_{i+1} = i+1 + \min \Bigl(\lambda -  \frac{k}{2^{i}}, \frac{k+1}{2^{i}} \Bigr) \quad \text{and} \quad v_{i+1} = i+1  + \max \Bigl(\lambda -  \frac{k}{2^{i}}, \frac{k+1}{2^{i}} \Bigr) - \frac{1}{2^{i+1}} \, .
\]
As above, by separating the cases $\lambda \leq \frac{4k+2}{2^{i+1}}$ and $\lambda \geq \frac{4k+2}{2^{i+1}}$, we conclude that the statement holds for $i+1$.
\end{proof}

\begin{remark}\label{rem:modified}
  A similar induction shows that for all $\lambda \geq 2$, the primal tropical central path is constant, equal to the
  tropical barycenter of $\val(\puiseuxR_r)$, whose $(u,v)$-component is defined by $u_0 = 1$, $v_0 =2$, and:
  \[
  u_{i}(\lambda) = i + 1 \quad \text{and} \quad  v_{i}(\lambda) = i + 1 + \frac{1}{2^{i}} \qquad \text{for all} \  1\leq i\leq r \, .
  \]
  For $\lambda \leq 0$, the primal tropical central path consists of a half-line towards an optimal solution
  of~\ref{eq:cex}. We have $u_0(\lambda) = 1$, $v_0(\lambda) = \lambda$ as well as:
  \[
  u_{i} (\lambda)= i + \lambda \quad \text{and} \quad v_{i} (\lambda)= i + \frac{1}{2^{i}}  \qquad \text{for all } 1\leq i\leq r \, .
  \]
\end{remark}

Table~\ref{tab:table} gives a summary of relevant coordinate values related to the primal tropical central path.

\begin{table}[tb]
\caption{Coordinates of points on the primal tropical central path of \ref{eq:cex} for some specific values of $\lambda$,
  where $1 \leq i \leq r$ and $k = 0, 2, \dots, 2^{i-1}-2$.}
\label{tab:table}
\renewcommand{\arraystretch}{1.5}
\begin{tabular*}{.75\linewidth}{@{\extracolsep{\fill}}>{$}c<{$}>{$}c<{$}>{$}c<{$}>{$}c<{$}>{$}c<{$}>{$}c<{$}@{}}\toprule
 \lambda & \frac{4k}{2^i} & \frac{4k+2}{2^i} & \frac{4k+4}{2^i} & \frac{4k+6}{2^i} & \frac{4k+8}{2^i} \\
\midrule
 u_i & i + \frac{2k}{2^i} & i + \frac{2k+2}{2^i} & i + \frac{2k+2}{2^i} & i + \frac{2k+4}{2^i} & i + \frac{2k+4}{2^i} \\
 v_i & i + \frac{2k+1}{2^i} & i + \frac{2k+1}{2^i} & i + \frac{2k+3}{2^i} & i + \frac{2k+3}{2^i} & i + \frac{2k+5}{2^i} \\
 z_i &  i + \frac{2k}{2^i} & i + \frac{2k+2}{2^i} & i + \frac{2k+4}{2^i} & i + \frac{2k+4}{2^i} & i + \frac{2k+4}{2^i} \\
 z'_i & i + \frac{2k+2}{2^i} & i + \frac{2k+2}{2^i} & i + \frac{2k+2}{2^i} & i + \frac{2k+4}{2^i} & i + \frac{2k+6}{2^i} \\
 h_i & i + \frac{2k+1}{2^i} & i + \frac{2k+1}{2^i} & i + \frac{2k+3}{2^i} & i + \frac{2k+3}{2^i} & i + \frac{2k+5}{2^i} \\
\bottomrule
\end{tabular*}
\end{table}

\subsection{Curvature analysis}\label{subsec:curvature}

The linear program $\hardyex_r$ gives rise to a family of ordinary linear programs $\hardyex_r(t)$ over the reals,
obtained by instantiating the parameter $t$ by a real value. We reuse the notation of Section~\ref{sec:central_path},
and given $\lambda \in \R$, we denote by $\cpath_t(\lambda)$ the point on the central path of the linear program
$\hardyex_r(t)$ with parameter $t^\lambda$, and by $\bm \cpath(\lambda)$ the germs of the functions $t \mapsto
\cpath_t(\lambda)$, see Lemma~\ref{lem:hardy_central_path}. In this section, we exploit the characterization of
the tropical central path, \ie~the map $\val(\bm \cpath(\cdot))$, to establish lower bounds on the curvature of the
classical central paths $\cpath_t$ when $t$ is large enough.

Given two non-null vectors $x, y \in \R^p$, we denote by $\angle xy$ the measure $\alpha \in [0,\pi]$ of the angle of the vectors $x$ and $y$, so that
\[
\cos \alpha = \frac{\scalar{x}{y}}{\norm{x} \norm{y}} \, ,
\]
where $\scalar{\cdot}{\cdot}$ and $\norm{\cdot}$ refer to the Euclidean scalar product and the associated norm respectively. By extension, given three points $U, V, W \in \R^p$ such that $U \neq V$ and $V \neq W$, we denote by $\angle U V W$ the angle formed by the vectors $U V$ and $V W$. 
Recall that if $\tau$ is a polygonal curve in $\R^p$ parameterized over an interval $[a,b]$, the total curvature $\kappa(\tau,[a,b])$ is defined as the sum of angles between the consecutive segments of the curve. More generally, the total curvature $\kappa(\sigma,[a,b])$ can be defined for an arbitrary curve $\sigma$, parameterized over the same interval, as the supremum of $\kappa(\tau,[a,b])$ over all polygonal curves $\tau$ inscribed in $\sigma$. When $\sigma$ is twice continuously differentiable, this coincides with the standard definition of the total curvature $\int_a^b \norm{\kappa''(s)}ds$, when $\kappa$ is parameterized by arc length, see~\cite[Chapter~V]{nonsmoothpaths} for more background. 

Our approach rests on estimating the curvature of $\cpath_t$ using approximations by polygonal curves. To this end, we prove the following lemma:
\begin{lemma}\label{lemma:angle}
Let $\bm x$, $\bm y$ be two non-null vectors in $\K^p$, and let $x := \val(\bm x)$ and $y := \val(\bm y)$. The limit of $\angle\bm x(t) \bm y(t)$ exists, and if the sets $\argmax_{i \in [p]} x_i$ and $\argmax_{i \in [p]} y_i$ are disjoint, then 
\[
\lim_{t \to +\infty} \angle\bm x(t) \bm y(t) = \frac{\pi}{2} \, .
\]
\end{lemma}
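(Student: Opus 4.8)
The plan is to study $\cos\angle\bm x(t)\bm y(t) = \scalar{\bm x(t)}{\bm y(t)}\big/\bigl(\norm{\bm x(t)}\,\norm{\bm y(t)}\bigr)$ by comparing the growth orders of numerator and denominator, using the polynomial boundedness of $\mystruct$ encoded in~\eqref{eq:exponent}. Since $\bm x$ and $\bm y$ are non-null, $\bm x(t)$ and $\bm y(t)$ are non-null vectors of $\R^p$ for $t$ large, so the angle is well defined, and $x_{\max} := \max_{i\in[p]} x_i$ and $y_{\max} := \max_{i\in[p]} y_i$ are real numbers.

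First I would pin down the denominator. Because $\norm{\bm x(t)}^2 = \sum_{i} \bm x_i(t)^2$ is a sum of non-negative germs, there is no cancellation, and its dominant term is $\bigl(\sum_{i\in \argmax_{i\in[p]} x_i} c_i^{\,2}\bigr)\, t^{2x_{\max}}$, where for $i\in\argmax_{i\in[p]} x_i$ one has $x_i = x_{\max} > -\infty$ so that $\bm x_i$ is not ultimately zero and admits, by~\eqref{eq:exponent}, a leading coefficient $c_i \neq 0$. The coefficient $\sum_{i\in\argmax_{i\in[p]} x_i} c_i^{\,2}$ is a strictly positive real, hence $\norm{\bm x(t)} = d_x\, t^{x_{\max}} + o(t^{x_{\max}})$ with $d_x > 0$, and symmetrically $\norm{\bm y(t)} = d_y\, t^{y_{\max}} + o(t^{y_{\max}})$ with $d_y > 0$. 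For the numerator I would observe that $\scalar{\bm x}{\bm y} = \sum_i \bm x_i \bm y_i$ is an element of $\K$, hence either is ultimately zero or, by~\eqref{eq:exponent}, satisfies $\scalar{\bm x(t)}{\bm y(t)} \sim c\, t^\gamma$ with $c \neq 0$ and $\gamma = \val\bigl(\scalar{\bm x}{\bm y}\bigr) \in \R$; in all cases $\gamma \le \max_i \val(\bm x_i \bm y_i) = \max_i (x_i + y_i) \le x_{\max} + y_{\max}$, because the valuation of a finite sum never exceeds the maximum of the valuations of its summands.

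Dividing, $\cos\angle\bm x(t)\bm y(t)$ is either ultimately $0$ or asymptotic to $\frac{c}{d_x d_y}\, t^{\gamma - x_{\max} - y_{\max}}$, so it converges as $t\to+\infty$: to $c/(d_x d_y)$ when $\gamma = x_{\max}+y_{\max}$, and to $0$ otherwise. Call the limit $\ell$; since $\cos\angle\bm x(t)\bm y(t)\in[-1,1]$ for every $t$, we have $\ell\in[-1,1]$, and as $\angle\bm x(t)\bm y(t) = \arccos\bigl(\cos\angle\bm x(t)\bm y(t)\bigr)$ with $\arccos$ continuous on $[-1,1]$, the limit $\lim_{t\to+\infty}\angle\bm x(t)\bm y(t) = \arccos\ell$ exists. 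This settles the first assertion. For the second, disjointness of $\argmax_{i\in[p]} x_i$ and $\argmax_{i\in[p]} y_i$ means that every index $i$ fails to maximize at least one of the two vectors, hence $x_i + y_i < x_{\max} + y_{\max}$ for all $i$, so $\max_i(x_i+y_i) < x_{\max}+y_{\max}$ and therefore $\gamma < x_{\max}+y_{\max}$ (this includes the case $\scalar{\bm x}{\bm y}$ ultimately zero, i.e.\ $\gamma = -\infty$). Then $\ell = 0$ and $\lim_{t\to+\infty}\angle\bm x(t)\bm y(t) = \arccos 0 = \pi/2$.

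The only genuinely delicate point is the no-cancellation observation for $\norm{\bm x(t)}^2$: it is what forces the valuation of the denominator to be \emph{exactly} $x_{\max}+y_{\max}$ rather than merely bounded above by it, and this exact value is what makes the strict inequality $\gamma < x_{\max}+y_{\max}$ imply $\cos\to 0$. Everything else is a routine comparison of Hardy-field asymptotics supplied by~\eqref{eq:exponent}.
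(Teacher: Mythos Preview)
Your proof is correct and follows essentially the same approach as the paper: bound $\val\bigl(\scalar{\bm x}{\bm y}\bigr)$ by $\max_i(x_i+y_i)$, compute $\val\bigl(\norm{\bm x}\,\norm{\bm y}\bigr)=x_{\max}+y_{\max}$ exactly via the no-cancellation argument, and use the strict gap under the disjointness hypothesis to force the cosine to vanish. The only difference is in the existence part: the paper observes in one line that the cosine ratio is a bounded definable function in $\mystruct$ and therefore has a limit, whereas you compute the leading asymptotics of numerator and denominator separately and then compose with $\arccos$; this is slightly more explicit but amounts to the same thing.
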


\begin{proof}
The map $t \mapsto \scalar{\bm x(t)}{\bm y(t)}/\bigl(\norm{\bm x(t)} \norm{\bm y(t)}\bigr)$ is definable in the
polynomially bounded structure $\mystruct$, and it is bounded by $1$. Therefore, it has a limit when $t \to +\infty$,
which proves the first part of the statement. Now, observe that:
\[
\val\Bigl(\frac{\scalar{\bm x}{\bm y}}{\norm{\bm x} \norm{\bm y}}\Bigr) \leq \max_{i \in [p]} (x_i + y_i) - \bigl(\max_{i \in [p]} x_i + \max_{i \in [p]} y_i\bigr) \, .
\]
As a consequence, if $\argmax_{i \in [p]} x_i \cap \argmax_{i \in [p]} y_i = \emptyset$, then the latter quantity is
negative. It follows that the ratio $\scalar{\bm x(t)}{\bm y(t)}/\bigl(\norm{\bm x(t)} \norm{\bm y(t)}\bigr)$ tends to
$0$ when $t \to +\infty$. This gives the claim.
\end{proof}

We will use Lemma~\ref{lemma:angle} in order to estimate the limit when $t \to +\infty$ of the angle between segments formed by successive points $\cpath_t(\lambda)$, $\cpath_t(\lambda')$ and $\cpath_t(\lambda'')$, \ie~with $\lambda < \lambda' < \lambda''$. One remarkable property is that the corresponding points of the tropical central path satisfy $\troppath(\lambda) \leq \troppath(\lambda') \leq \troppath(\lambda'')$. We refine Lemma~\ref{lemma:angle} to fit this setting:
\begin{lemma}\label{lemma:angle2}
Let $\bm U, \bm V, \bm W \in \K^p$, and $U := \val(\bm U)$, $V := \val(\bm V)$ and $W := \val(\bm W)$. If $\max_{i \in [p]} U_i < \max_{i \in [p]} V_i < \max_{i \in [p]} W_i$, and the sets $\argmax_{i \in [p]} V_i$ and $\argmax_{i \in [p]} W_i$ are disjoint, we have:
\[
\lim_{t \to +\infty} \angle\bm U(t) \bm V(t) \bm W(t) = \frac{\pi}{2} \, .
\]
\end{lemma}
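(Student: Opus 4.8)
The plan is to reduce the claim to Lemma~\ref{lemma:angle}, applied to the two edge vectors of the polygonal path. By definition the angle $\angle\bm U(t)\bm V(t)\bm W(t)$ is the angle between the vectors $\bm x(t):=\bm V(t)-\bm U(t)$ and $\bm y(t):=\bm W(t)-\bm V(t)$, so I would set $\bm x:=\bm V-\bm U$ and $\bm y:=\bm W-\bm V$ in $\K^p$, and verify the hypotheses of Lemma~\ref{lemma:angle} for this pair: that $\bm x$ and $\bm y$ are non-null, and that $\argmax_{i\in[p]}\val(\bm x)_i$ and $\argmax_{i\in[p]}\val(\bm y)_i$ are disjoint. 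Granting this, Lemma~\ref{lemma:angle} gives $\lim_{t\to+\infty}\angle\bm x(t)\bm y(t)=\pi/2$, which is exactly the assertion; along the way one also gets $\bm U(t)\neq\bm V(t)$ and $\bm V(t)\neq\bm W(t)$ for all large $t$, so that the angle is well defined there.

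The only real content is computing $\argmax\val(\bm x)$ and $\argmax\val(\bm y)$. I would first record the elementary valuation fact that $\val(\bm f-\bm g)\le\max(\val\bm f,\val\bm g)$ for any $\bm f,\bm g\in\K$, with equality whenever $\val\bm f\neq\val\bm g$ (this is just that $\val$ is a valuation on the field $\K$, cf.~\eqref{e-morphism}). Note also that $\max_iV_i$ and $\max_iW_i$ are finite reals, since $\max_iU_i<\max_iV_i<\max_iW_i$ forces the right-hand quantities to exceed $-\infty$. Now I would use the hypothesis $\max_iU_i<\max_iV_i$ as follows: for an index $i\in\argmax_iV_i$ we have $\val\bm V_i=\max_jV_j>\max_jU_j\ge\val\bm U_i$, hence $\val(\bm V_i-\bm U_i)=\max_jV_j$; while for $i\notin\argmax_iV_i$ both $V_i$ and $U_i$ are strictly below $\max_jV_j$, so $\val(\bm V_i-\bm U_i)\le\max(V_i,U_i)<\max_jV_j$ (a maximum over the finite index set $[p]$). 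Consequently $\argmax_i\val(\bm V-\bm U)_i=\argmax_iV_i$, and in particular $\bm V-\bm U\neq0$. The identical argument applied with $\max_iV_i<\max_iW_i$ yields $\argmax_i\val(\bm W-\bm V)_i=\argmax_iW_i$ and $\bm W-\bm V\neq0$.

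Finally, since the hypothesis states that $\argmax_iV_i$ and $\argmax_iW_i$ are disjoint, so are $\argmax_i\val(\bm x)_i$ and $\argmax_i\val(\bm y)_i$; Lemma~\ref{lemma:angle} then applies to $\bm x$ and $\bm y$ and gives the stated limit $\pi/2$. I do not expect any genuine obstacle here: the argument is a short bookkeeping of valuations of differences followed by an invocation of Lemma~\ref{lemma:angle}. The one point to be mildly careful about is the edge case where $\argmax_iV_i$ (or $\argmax_iW_i$) equals all of $[p]$; but the computation above still gives $\argmax\val(\bm x)=\argmax_iV_i$ unconditionally, and the disjointness hypothesis anyway rules this case out.
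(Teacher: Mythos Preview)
Your proposal is correct and follows essentially the same route as the paper: both arguments compute $\argmax_i\val(\bm V-\bm U)_i=\argmax_iV_i$ and $\argmax_i\val(\bm W-\bm V)_i=\argmax_iW_i$ via the non-archimedean inequality $\val(\bm f-\bm g)\le\max(\val\bm f,\val\bm g)$ with equality when the valuations differ, and then invoke Lemma~\ref{lemma:angle}. Your write-up is a bit more explicit about the case split $i\in\argmax_iV_i$ versus $i\notin\argmax_iV_i$, but the content is the same.
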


\begin{proof}
Let us remark that for all $i \in [p]$, we have $\val(\bm V_i - \bm U_i) \leq \max(U_i, V_i)$, and this inequality is an equality if $U_i \neq V_i$. Since $\max_{i \in [p]} U_i < \max_{i \in [p]} V_i$, we deduce that $\max_{i \in [p]} \val(\bm V_i - \bm U_i) = \max_{i \in [p]} V_i$, and that the argument of the two maxima are equal. The same applies to the coordinates of the vector $\val(\bm W - \bm V)$. We deduce from Lemma~\ref{lemma:angle2} that $\angle\bm U(t) \bm V(t) \bm W(t)$ tends to $\pi / 2$ as soon as $\argmax_{i \in [p]} V_i \cap \argmax_{i \in [p]} W_i = \emptyset$.
\end{proof}

Given $U, V, W \in \trop^p$, this motivates us to introduce the \emph{tropical angle} $\angle^c U V W$ defined by $\angle^c U V W := \frac{\pi}{2}$ if $U, V, W$ satisfy the conditions of Lemma~\ref{lemma:angle2}, and $\angle^c U V W := 0$ otherwise.  We have the following result.
\begin{proposition}\label{prop:curvature}
Let $a, b \in \R$, and $\lambda_0 = a < \lambda_1 < \dots < \lambda_{q-1} < \lambda_q = b$. Then:
\[
\liminf_{t \to +\infty} \kappa\bigl(\cpath_t,[a, b]\bigr) \geq \sum_{k = 1}^{q-1} \angle^c \troppath(\lambda_{k-1}) \troppath(\lambda_k) \troppath(\lambda_{k+1}) \, .
\] 
\end{proposition}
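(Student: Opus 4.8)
The plan is to bound the total curvature of each classical central path $\cpath_t$ from below by the total curvature of a single polygonal curve inscribed in it, and then to let $t\to+\infty$ using the tropical angle estimate of Lemma~\ref{lemma:angle2}. Fix $t$ large enough that $\cpath_t$ is defined and continuous on $[a,b]$, and let $\tau_t$ be the polygonal curve whose successive vertices are $\cpath_t(\lambda_0),\cpath_t(\lambda_1),\dots,\cpath_t(\lambda_q)$. Since the total curvature of a curve is by definition the supremum of the total curvatures of its inscribed polygonal curves, and since the turning angle of $\tau_t$ at its $k$-th vertex is precisely $\angle\cpath_t(\lambda_{k-1})\cpath_t(\lambda_k)\cpath_t(\lambda_{k+1})$, one gets
\[
\kappa\bigl(\cpath_t,[a,b]\bigr)\;\geq\;\kappa\bigl(\tau_t,[a,b]\bigr)\;=\;\sum_{k=1}^{q-1}\angle\cpath_t(\lambda_{k-1})\,\cpath_t(\lambda_k)\,\cpath_t(\lambda_{k+1})\,.
\]
For this to be meaningful the $q+1$ sampled points must be pairwise distinct, which I would obtain from the injectivity of the central path: the parameter $\mu$ of a central path point $(x,w,y,s)$ can be read off via the complementarity relations $x_j s_j=w_i y_i=\mu$, and $\lambda\mapsto t^\lambda$ is injective, so $\lambda\mapsto\cpath_t(\lambda)$ is injective for each fixed $t$. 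In particular $\cpath_t(\lambda_{k-1})\neq\cpath_t(\lambda_k)\neq\cpath_t(\lambda_{k+1})$ for all $t$, hence the germs $\bm\cpath(\lambda_k)-\bm\cpath(\lambda_{k-1})$ and $\bm\cpath(\lambda_{k+1})-\bm\cpath(\lambda_k)$ are non-null.

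Next I would fix $k\in\{1,\dots,q-1\}$ and set $\bm U:=\bm\cpath(\lambda_{k-1})$, $\bm V:=\bm\cpath(\lambda_k)$, $\bm W:=\bm\cpath(\lambda_{k+1})$, elements of $\K^{2(m+n)}$ whose valuations are $\troppath(\lambda_{k-1})$, $\troppath(\lambda_k)$, $\troppath(\lambda_{k+1})$ (by Lemma~\ref{lem:hardy_central_path} and Theorem~\ref{th:geometric_characterization}), and such that $\angle\cpath_t(\lambda_{k-1})\cpath_t(\lambda_k)\cpath_t(\lambda_{k+1})=\angle\bm U(t)\bm V(t)\bm W(t)$ for $t$ large. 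By Lemma~\ref{lemma:angle} applied to the non-null germs $\bm V-\bm U$ and $\bm W-\bm V$, the limit $\ell_k:=\lim_{t\to+\infty}\angle\cpath_t(\lambda_{k-1})\cpath_t(\lambda_k)\cpath_t(\lambda_{k+1})$ exists. I then claim $\ell_k\geq\angle^c\troppath(\lambda_{k-1})\troppath(\lambda_k)\troppath(\lambda_{k+1})$: if this tropical angle is $0$ the inequality is trivial since $\ell_k\geq 0$, and if it equals $\tfrac{\pi}{2}$ then, by the very definition of $\angle^c$, the triple $\bm U,\bm V,\bm W$ satisfies exactly the hypotheses of Lemma~\ref{lemma:angle2}, which yields $\ell_k=\tfrac{\pi}{2}$.

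Finally, taking $\liminf_{t\to+\infty}$ in the first displayed inequality and using that each $\ell_k$ exists (so that the right-hand side converges to $\sum_k\ell_k$), I obtain
\[
\liminf_{t\to+\infty}\kappa\bigl(\cpath_t,[a,b]\bigr)\;\geq\;\sum_{k=1}^{q-1}\ell_k\;\geq\;\sum_{k=1}^{q-1}\angle^c\troppath(\lambda_{k-1})\,\troppath(\lambda_k)\,\troppath(\lambda_{k+1})\,,
\]
which is the assertion. I do not expect a genuine obstacle in this argument; the only points requiring care are the distinctness of the sampled points — handled above by injectivity of the central path — and the precise bookkeeping that $\angle\cpath_t(\lambda_{k-1})\cpath_t(\lambda_k)\cpath_t(\lambda_{k+1})$ is the \emph{turning} (exterior) angle between $\cpath_t(\lambda_k)-\cpath_t(\lambda_{k-1})$ and $\cpath_t(\lambda_{k+1})-\cpath_t(\lambda_k)$, which is exactly the quantity that Lemma~\ref{lemma:angle2} controls through $\val(\bm V-\bm U)$ and $\val(\bm W-\bm V)$.
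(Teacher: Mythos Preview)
Your proof is correct and follows the same approach as the paper: bound $\kappa(\cpath_t,[a,b])$ below by the angle sum of the inscribed polygonal curve with vertices $\cpath_t(\lambda_k)$, then invoke Lemma~\ref{lemma:angle2} term by term. The paper's own proof is a one-liner to this effect; you have simply filled in the details (injectivity of the central path, existence of each $\ell_k$ via Lemma~\ref{lemma:angle}, and the trivial case $\angle^c=0$) that the paper leaves implicit.
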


\begin{proof}
This follows from Lemma~\ref{lemma:angle2} and from the fact that for all (sufficiently large) $t$, we have $
\kappa\bigl(\cpath_t,[a, b]\bigr) \geq \sum_{k = 1}^{q-1} \angle \cpath_t(\lambda_{k-1}) \cpath_t(\lambda_k) \cpath_t(\lambda_{k+1})$.
\end{proof}
Note that the previous result also holds for the primal and dual components of the central paths respectively. We are now ready to prove a lower bound on the curvature of the central path.

\begin{theorem}\label{th:curvature}
  The limit inferior of the curvature of the primal-dual (resp.\ primal) central path of the linear program
  $\hardyex_r(t)$ when $t \to +\infty$ is greater than or equal to $(2^{r-1}-1)\frac{\pi}{2}$.
\end{theorem}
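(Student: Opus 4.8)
The plan is to apply Proposition~\ref{prop:curvature} to the primal tropical central path (and, with the same subdivision, to the full primal--dual one) and to exhibit $2^{r-1}-1$ tropical right angles along it. The first task is to locate, for $0<\lambda\le 2$, the largest coordinate of $\troppathprimal(\lambda)$. From Lemma~\ref{lem-dynamic} one has $u_i(\lambda)=1+\min(u_{i-1}(\lambda),v_{i-1}(\lambda))$ together with $z_i(\lambda)=1+u_{i-1}(\lambda)$, $z'_i(\lambda)=1+v_{i-1}(\lambda)$ and $h_i(\lambda)=v_i(\lambda)=1-2^{-i}+\max(u_{i-1}(\lambda),v_{i-1}(\lambda))$; hence the largest coordinate of the block $\{u_i,v_i,z_i,z'_i,h_i\}$ is $\max(z_i(\lambda),z'_i(\lambda))=1+\max(u_{i-1}(\lambda),v_{i-1}(\lambda))$, attained uniquely at $z_i$ when $u_{i-1}(\lambda)>v_{i-1}(\lambda)$ and uniquely at $z'_i$ when $v_{i-1}(\lambda)>u_{i-1}(\lambda)$. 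Since $v_{i-1}=1-2^{-(i-1)}+\max(u_{i-2},v_{i-2})$, the block maxima satisfy $1+\max(u_{i-1},v_{i-1})\ge 2-2^{-(i-1)}+\max(u_{i-2},v_{i-2})>1+\max(u_{i-2},v_{i-2})$ for $i\ge 2$, so they strictly increase with $i$; thus the maximum of $\troppathprimal(\lambda)$ equals $1+\max(u_{r-1}(\lambda),v_{r-1}(\lambda))$ and is realized uniquely at $z_r$ or $z'_r$ as above. Finally, by~\eqref{eq:primal_dual} the dual coordinates of $\troppath(\lambda)$ are $\lambda$ minus primal coordinates, hence at most $\lambda-\min(\lambda,1)\le 1<r$ for $0\le\lambda\le 2$, so for $r\ge 2$ the very same coordinate realizes the maximum of the whole vector $\troppath(\lambda)$.

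Next I would take the subdivision $\lambda_j:=2^{2-r}j$ for $j=0,1,\dots,2^{r-1}$, so that $0=\lambda_0<\dots<\lambda_{2^{r-1}}=2$. Specializing Proposition~\ref{prop:counter_ex} to level $i=r-1$ (with $\lambda_{2^{r-1}}=2$ the right endpoint of its range; see also Remark~\ref{rem:modified}), a direct computation gives
\[
\{u_{r-1}(\lambda_j),v_{r-1}(\lambda_j)\}=\bigl\{(r-1)+2^{1-r}j,\ (r-1)+2^{1-r}(j+1)\bigr\},
\]
two distinct values, with $v_{r-1}(\lambda_j)$ the larger when $j$ is even and $u_{r-1}(\lambda_j)$ the larger when $j$ is odd. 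Combining with the previous paragraph, $\max_k\troppathprimal(\lambda_j)_k=r+2^{1-r}(j+1)$, which is strictly increasing in $j$, and its unique argmax is the $z'_r$-coordinate for $j$ even and the $z_r$-coordinate for $j$ odd; in particular the argmaxes of $\troppathprimal(\lambda_j)$ and $\troppathprimal(\lambda_{j+1})$ are disjoint. Hence, for each $j\in\{1,\dots,2^{r-1}-1\}$, the triple $\troppathprimal(\lambda_{j-1}),\troppathprimal(\lambda_j),\troppathprimal(\lambda_{j+1})$ satisfies the hypotheses of Lemma~\ref{lemma:angle2}, so the tropical angle $\angle^c\troppathprimal(\lambda_{j-1})\troppathprimal(\lambda_j)\troppathprimal(\lambda_{j+1})$ equals $\pi/2$.

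Finally I would invoke Proposition~\ref{prop:curvature} with $[a,b]=[0,2]$, both for the primal component and — by the last observation of the first paragraph — for the full primal--dual path, obtaining
\[
\liminf_{t\to+\infty}\kappa\bigl(\cpath_t,[0,2]\bigr)\ \ge\ \sum_{j=1}^{2^{r-1}-1}\angle^c\troppathprimal(\lambda_{j-1})\troppathprimal(\lambda_j)\troppathprimal(\lambda_{j+1})\ =\ (2^{r-1}-1)\tfrac{\pi}{2},
\]
which is the desired bound for both central paths (for $r=1$ it reads $0$ and there is nothing to prove). The hard part is the first paragraph: one must recognize that it is the slack coordinates $z_r,z'_r$, rather than $u_r,v_r$, that dominate along the path, and verify that the dominating index is a single coordinate which flips between two distinct ones exactly when $u_{r-1}$ and $v_{r-1}$ exchange order (and that the dual coordinates never reach the maximum). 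Once this is settled, Step~2 is a routine unravelling of Proposition~\ref{prop:counter_ex} and Step~3 is immediate.
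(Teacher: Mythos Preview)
Your proof is correct and follows essentially the same strategy as the paper: the same subdivision $\lambda_j=4j/2^r$ of $[0,2]$, the same identification of the slack coordinates $z_r,z'_r$ as the (alternating) unique argmax of $\troppath(\lambda_j)$, and the same appeal to Proposition~\ref{prop:curvature}. The only difference is stylistic: you derive the domination of the level-$r$ block directly from the recursion of Lemma~\ref{lem-dynamic} (the block maxima $1+\max(u_{i-1},v_{i-1})$ are strictly increasing in $i$), whereas the paper reads the relevant values off Table~\ref{tab:table}. Incidentally, your parity assignment ($z'_r$ for $j$ even, $z_r$ for $j$ odd) is the correct one; the paper's own proof has these swapped, which is immaterial since only the alternation matters.
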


\begin{proof}
  We use Proposition~\ref{prop:curvature} and provide a lower bound on the limit inferior of $\kappa(\cpath_t, [0,2])$
  by considering the subdivision of the range $[0,2]$ by the scalars $\lambda_k = \frac{4 k}{2^r}$ for $k = 0, \dots,
  2^{r-1}$.

  Let us first point out that, given $\lambda \in [0,2]$, all the dual components of the point $\troppath(\lambda)$
  of the tropical central path are less than or equal to $\max(0,\lambda - 1)$. This is a consequence of the
  identity~\eqref{eq:primal_dual} and the fact that all the primal components are greater than or equal to
  $\min(1,\lambda)$ by Proposition~\ref{prop:lift}. It follows that the dual components are dominated by the primal
  ones. In particular, using Table~\ref{tab:table}, we deduce that the maximal component of the vector
  $\troppath(\lambda_k)$ is equal to $r + \frac{2k+2}{2^r}$, and that is uniquely attained by the coordinate
  $z_r(\lambda)$ when $k$ is even, and by $z'_r(\lambda)$ when $k$ is odd. We immediately deduce that $\angle^c
  \troppath(\lambda_{k-1}) \troppath(\lambda_k) \troppath(\lambda_{k+1}) = \frac{\pi}{2}$, and we obtain the expected
  result using Proposition~\ref{prop:curvature}. The same proof provides the result for the primal central path.
\end{proof}

\begin{remark}\label{remark:curvature_dual}
We can refine Theorem~\ref{th:curvature} so as to include a lower bound in the curvature of the dual central path, up to considering a slightly modified version of $\hardyex_r$. More precisely, we introduce the extra variables $\bm u_{r+1}$ and $\bm v_{r+1}$, and add the inequalities $\bm u_{r+1} \leq \frac{1}{t^{r+1}} \bm u_r$ and $\bm v_{r+1} \leq \frac{1}{t^{r+1}} \bm v_r$ to the constraints defining $\hardyex_r$. After the introduction of slack variables, these inequalities become
\begin{align*}
\bm u_{r+1} + \bm z_{r+1} & = \frac{1}{t^{r+1}} \bm u_r \\
\bm v_{r+1} + \bm h_{r+1} & = \frac{1}{t^{r+1}} \bm v_r
\end{align*}
Using the technique involved in the proof of Proposition~\ref{prop:lift}, in particular the characterization of Corollary~\ref{cor:geometric_characterization}, we can easily show that the corresponding components of $u_{r+1}(\lambda)$, $v_{r+1}(\lambda)$, $z_{r+1}(\lambda)$ and $h_{r+1}(\lambda)$ of the primal tropical central path are given by the relations $u_{r+1}(\lambda) = z_{r+1}(\lambda) = -(r+1) + u_r(\lambda)$ and $v_{r+1} = h_{r+1} = -(r+1) + v_r(\lambda)$. The dual variables, respectively denoted by $u^d_{r+1}(\lambda)$, $v^d_{r+1}(\lambda)$, $z^d_{r+1}(\lambda)$ and $h^d_{r+1}(\lambda)$, are given by $u^d_{r+1}(\lambda) = z^d_{r+1}(\lambda) = \lambda - u_{r+1}(\lambda)$ and $v^d_{r+1}(\lambda) = h^d_{r+1}(\lambda) = \lambda - v_{r+1}(\lambda)$, thanks to~\eqref{eq:primal_dual}. Therefore, we obtain the values in Table~\ref{tab:othertable}.

\begin{table}[tb]
  \caption{Further central path coordinates for the modified Hardy linear program described in Remark~\ref{rem:modified}.  This extends Table~\ref{tab:table}.}
\label{tab:othertable}
\renewcommand{\arraystretch}{1.5}
\begin{tabular*}{.75\linewidth}{@{\extracolsep{\fill}}>{$}c<{$}>{$}c<{$}>{$}c<{$}>{$}c<{$}@{}}\toprule
\lambda & \frac{4k}{2^r} & \frac{4k+2}{2^r} & \frac{4k+4}{2^r} \\
\midrule
u_{r+1} & -1 + \frac{2k}{2^r} & -1 + \frac{2k+2}{2^r} & -1 + \frac{2k+2}{2^r}  \\
v_{r+1} & -1 + \frac{2k+1}{2^r} & -1 + \frac{2k+1}{2^r} & -1 + \frac{2k+3}{2^r}  \\
u^d_{r+1} & 1 + \frac{2k}{2^r} & 1 + \frac{2k}{2^r} & 1 + \frac{2k+2}{2^r}  \\
v^d_{r+1} & 1 + \frac{2k-1}{2^r} & 1 + \frac{2k+1}{2^r} & 1 + \frac{2k+1}{2^r} \\
\bottomrule
\end{tabular*}
\end{table}

Since the new primal and dual coordinates of the tropical central path are strictly less than the coordinates $z_r(\lambda)$ and $z'_r(\lambda)$ as soon as $r \geq 2$, the result of Theorem~\ref{th:curvature} remains unchanged. Concerning the dual central path, we note that at the points $\lambda'_k = 2 k / 2^r$ ($k \in \{0, \dots, 2^r\}$), the maximal components of the dual tropical central path are $u^d_{r+1}(\lambda'_k) = w^d_{r+1}(\lambda'_k)$ if $k$ is even, and $v^d_{r+1}(\lambda'_k) = h^d_{r+1}(\lambda'_k)$ if $k$ is odd, and they are equal to $1 + k / 2^r$. We deduce from Proposition~\ref{prop:curvature} that the limit inferior of the dual central path is greater than or equal to $(2^r - 1) \frac{\pi}{2}$.
\end{remark}

\begin{remark}
Picking $t$ of order $2^{\Omega(r^2 2^r)}$ is enough to obtain
a total curvature of $\Omega(2^r)$ for the central path of $\hardyex_r(t)$. 
This follows from a uniform estimate of the distance between the image of the classical central path under the $\log_t$ map and the tropical central path.
In general, such an estimate is provided by Theorem~\ref{th:uniform}. 
However, in the present case, an equivalent estimate is more
easily obtained by exploiting the simple structure of the constraint matrix.
We omit the proof as it relies on a routine but lengthy computation. 
\end{remark}

\bibliographystyle{alpha}

\end{document}